\documentclass[12pt]{amsart}

\usepackage{amssymb, amscd, txfonts}
\usepackage[pdftex]{graphicx}
\usepackage[all]{xy}

%%%%%%% Layout %%%%%%%%%%%%%%%%%%%%%
%\setlength{\textwidth}{16cm}
%\setlength{\oddsidemargin}{0cm}
%\setlength{\evensidemargin}{0cm}
%\setlength{\topmargin}{0cm}
%\setlength{\textheight}{22.5cm}
 
\numberwithin{equation}{section}

\sloppy

%%%%%%% Theoremstyle %%%%%%%%%%%%%%%%%%
\newtheorem{theorem}{Theorem}[section]
\newtheorem{proposition}[theorem]{Proposition}
\newtheorem{lemma}[theorem]{Lemma}
\newtheorem{corollary}[theorem]{Corollary}

\theoremstyle{definition}

\newtheorem{condition}[theorem]{Condition}

\theoremstyle{remark}
\newtheorem{remark}[theorem]{Remark}

%%%%%%% Macro %%%%%%%%%%%%%%%%%%%%%%

\renewcommand{\ker}{\operatorname{Ker}}

\newcommand{\Z}{\mathbb{Z}}
\newcommand{\Q}{\mathbb{Q}}
\newcommand{\R}{\mathbb{R}}
\newcommand{\C}{\mathbb{C}}

\newcommand{\proj}{{\mathbb P}}

\newcommand{\D}{\mathcal{D}}
\newcommand{\G}{\Gamma}
\newcommand{\Gr}{\rm Gr}
\newcommand{\LL}{\mathcal{L}}
\newcommand{\GFZ}{\Gamma(F)}
\newcommand{\GF}{\Gamma_{F}}
\newcommand{\UFZ}{U(F)_{\mathbb{Z}}}
\newcommand{\GFZbar}{\overline{\Gamma(F)}}
\newcommand{\GFZdash}{\Gamma(F)'}
\newcommand{\GFZbd}{\overline{\Gamma(F)'}}
\newcommand{\VF}{\mathcal{V}_{F}}
\newcommand{\XF}{\mathcal{X}(F)} 
\newcommand{\XFcpt}{\mathcal{X}(F)^{\Sigma_{F}}} 
\newcommand{\YFcpt}{\mathcal{Y}(F)^{\Sigma_{F}}} 
\newcommand{\Xcpt}{X^{\Sigma}}
\newcommand{\SFs}{S\!(F,\sigma)}
\newcommand{\DFs}{D(F,\sigma)}
\newcommand{\DFsbar}{\overline{D(F,\sigma)}}
\newcommand{\Mi}{M_{can}^{(i)}}
\newcommand{\res}{{\rm res}}

\DeclareMathOperator{\aut}{Aut}
\DeclareMathOperator{\coker}{Coker}

\begin{document}

%%%%%%% Title %%%%%%%%%%%%%%%%%%%%%%%%
\title[]{Mixed Hodge structures and Siegel operators}
\author[]{Shouhei Ma}
\thanks{Supported by KAKENHI 21H00971 and 20H00112} 
\address{Department~of~Mathematics, Tokyo~Institute~of~Technology, Tokyo 152-8551, Japan}
\email{ma@math.titech.ac.jp}
%\subjclass[2020]{11F75, 14C30, 11F46, 11F55}
\keywords{} 
%\dedicatory{}

\begin{abstract}
In this paper we study mixed Hodge structures on the cohomology of locally symmetric varieties 
and give an application to modular forms. 
After proving vanishing of some Hodge numbers, 
we focus on the weight filtration on the last Hodge subspace of the middle degree cohomology. 
We prove that the weight filtration coincides with 
the corank filtration on the space of modular forms of canonical weight defined by the Siegel operators, 
and calculate the graded quotients. 
As an application, we deduce surjectivity of the total Siegel operators in many cases, 
and identify an obstruction space in the remaining case. 
\end{abstract}

\maketitle

\section{Introduction}\label{sec: intro}

Cohomology of locally symmetric varieties is a rich object of study 
which lies at the crossroads of various branches of Mathematics such as  
Harmonic Analysis, Number theory and Algebraic Geometry. 
In this paper we study mixed Hodge structures on them. 
We connect the weight filtration with 
the Siegel operators for modular forms of canonical weight. 
This in turn has an application to a classical problem about Siegel operators, 
which is a new instance of application of mixed Hodge theory. 

Let ${\D}$ be a Hermitian symmetric domain and ${\G}$ be an arithmetic subgroup of 
$G={\rm Aut}({\D})^{\circ}=\mathbb{G}({\R})^{\circ}$, 
where $\mathbb{G}$ is a connected semi-simple linear algebraic group over ${\Q}$. 
For simplicity of exposition we assume that ${\G}$ is neat in this introduction. 
(See \S \ref{sec: non-neat} for the non-neat case.)  
The quotient $X={\D}/{\G}$ has the structure of a smooth quasi-projective variety (\cite{BB}), 
known under various names such as 
\textit{locally symmetric variety} or \textit{arithmetic quotient} or \textit{modular variety} or \textit{Shimura variety}. 
By the theory of Deligne \cite{DeII}, the cohomology 
$H^k(X)=H^k(X, {\C})$ of $X$ has a canonical mixed Hodge structure $(W_{\bullet}, F^{\bullet})$, 
where $W_{\bullet}$ is the weight filtration and $F^{\bullet}$ is the Hodge filtration. 
The weight graded quotient ${\Gr}_{m}^{W}H^k(X)$ is nonzero only in the range 
$k\leq m \leq \min (2k, 2n)$ where $n=\dim X$. 
The lowest part ${\Gr}^{W}_{k} H^k(X)=W_k H^k(X)$ 
is the image of the natural map from the $L^2$-cohomology (\cite{HZIII}) 
and hence related to automorphic forms in the discrete spectrum. %, especially to the cuspidal representations. 
On the other hand, the higher graded quotients ${\Gr}^{W}_{m} H^k(X)$, $m>k$, 
also have geometric significance. 
They should be related to automorphic forms in the continuous spectrum, 
but currently the relation seems mysterious (cf.~\cite{OS1}, \cite{Na}). 
In this paper we study the higher weight graded quotients by using toroidal compactifications of $X$. 

In order to simplify the exposition, 
we assume in this introduction that $\mathbb{G}$ is ${\Q}$-simple and 
concentrate on the middle degree $k=n>1$, 
where usually the cohomology is most interesting. 
Let $r$ be the ${\Q}$-rank of $\mathbb{G}$. 
Then we have a flag 
$F_1 \succ \cdots \succ F_r$ of reference cusps of ${\D}$ of length $r$ such that  
every cusp of ${\D}$ is equivalent to one of $F_{i}$ under the action of $\mathbb{G}({\Q})$. 
The subscript $i$ of $F_{i}$ is referred to as the \textit{corank} of $F_{i}$. 
For $1\leq i \leq r$ we denote by $n(i)$ the dimension of the center of 
the unipotent radical of the stabilizer of $F_{i}$ in $G$. 
%(Roughly speaking,  $U(F_{i})$ is the group of translations around $F_{i}$.) 
Then $0<n(1)< \cdots <n(r)\leq n$. 
We have $n(r)=n$ if and only if ${\D}$ has a tube domain realization. 
We also set $n(0)=0$.  

As a preliminary for our main result, we first prove vanishing of some Hodge numbers. 
Let $h^{p,q}_{n}(X)=\dim {\Gr}_{F}^{p}{\Gr}_{p+q}^{W}H^n(X)$ be the Hodge number of $H^n(X)$ of degree $(p, q)$. 
Since $X$ is smooth, we have $h_{n}^{p,q}(X)\ne 0$ only when $p+q\geq n$, $p\leq n$, $q\leq n$ (\cite{DeII}). 
We derive the following further constraint (Theorem \ref{thm: VT I}). 

\begin{theorem}\label{thm: VT intro}
Let $n(i-1)<m \leq n(i)$ with $1\leq i \leq r$. 
Then ${\Gr}_{n+m}^WH^n(X)$ is a Tate twist of 
an effective pure Hodge structure of weight $\leq n-n(i)$. 
Therefore, if $n(i-1)<p+q-n\leq n(i)$, we have $h^{p,q}_{n}(X)\ne 0$ only when $|p-q|\leq n-n(i)$. 
When $m>n(r)$, we have ${\Gr}^{W}_{n+m}H^n(X)=0$. 
\end{theorem}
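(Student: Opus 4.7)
The plan is to pass to a smooth projective toroidal compactification $\overline{X}=X^{\Sigma}$ with simple normal crossing boundary $D=\overline{X}\setminus X$, and analyze Deligne's weight spectral sequence
\[
E_1^{-m,n+m} \;=\; H^{n-m}(D^{(m)})(-m) \;\Longrightarrow\; {\Gr}^{W}_{n+m} H^n(X),
\]
which degenerates at $E_2$. This exhibits ${\Gr}^{W}_{n+m}H^n(X)$ as a subquotient of $H^{n-m}(D^{(m)})(-m)$, so the task reduces to bounding the Hodge types appearing in $H^{n-m}(D^{(m)})$ and to identifying when $D^{(m)}=\emptyset$.

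The irreducible components of $D$ and their $m$-fold intersections $D_I=\bigcap_{i\in I}D_i$ (with $|I|=m$) are indexed by pairs $(F,\sigma)$ consisting of a rational boundary component $F$ of corank $i'$ and a cone $\sigma$ of dimension $m$ in the fan $\Sigma_F$ decomposing the positive cone inside $U(F)_{\R}\cong\R^{n(i')}$. Such a cone exists only when $m\le n(i')$, so if $m>n(r)$ no strata exist and ${\Gr}^{W}_{n+m}H^n(X)=0$, which is the last assertion. For $n(i-1)<m\le n(i)$, only cusps of corank $i'\ge i$ contribute.

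For each such pair $(F,\sigma)$, the structure theory of toroidal boundary strata exhibits $D_{F,\sigma}$ (up to a finite \'etale cover arising from $\Gamma$) as a smooth projective toric fibration of relative complex dimension $n(i')-m$ over an abelian scheme $\mathcal{A}_F$ of complex dimension $n-n(i')$, itself fibered over the lower-dimensional locally symmetric boundary component attached to $F$. Because the cohomology of the smooth projective toric fibers is of Tate type with trivial monodromy, the Leray spectral sequence of this toric fibration degenerates in mixed Hodge structures, and every Hodge type in $H^{n-m}(D_{F,\sigma})$ has the form $(p',q')=(\alpha+q,\beta+q)$ where $(\alpha,\beta)$ is a Hodge type of some $H^r(\mathcal{A}_F)$ with $r+2q=n-m$. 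The vanishing of $H^r(\mathcal{A}_F)$ for $r>2(n-n(i'))$ forces $2q\ge n(i')-m$, so that
\[
\min(p',q')\;\ge\; q\;\ge\;\frac{n(i')-m}{2}\;\ge\;\frac{n(i)-m}{2},
\]
equivalently $|p'-q'|\le n-n(i)$. Tate-twisting by $(-m)$ then gives the claimed description of ${\Gr}^{W}_{n+m}H^n(X)$ as a Tate twist of an effective pure Hodge structure of weight $\le n-n(i)$, from which the Hodge number vanishing follows.

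The principal obstacle is establishing the fibration structure above with the correct dimension count $\dim_{\C}\mathcal{A}_F=n-n(i')$ and with trivial monodromy on the Tate-type fiber cohomology; these come from the Ash--Mumford--Rapoport--Tai theory of toroidal compactifications, but must be extracted carefully by tracking the interaction between the arithmetic lattice $\Gamma$ and the unipotent radical filtration at each cusp. A secondary issue is the compatibility of the Leray degeneration with the weight spectral sequence; this is handled by working throughout in the category of mixed Hodge structures and exploiting the rigidity of Tate-type weights.
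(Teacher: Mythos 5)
Your overall route is the same as the paper's: compactify by a smooth projective toroidal compactification with SNC boundary, identify ${\Gr}^{W}_{n+m}H^n(X)$ as a subquotient of $H^{n-m}(D(m))(-m)$ via the $E_2$-degenerate weight spectral sequence, observe that $D(m)=\emptyset$ for $m>n(r)$, and get the Hodge-type bound from the fact that only cusps with $\dim U(F)\geq n(i)$ carry cones of dimension $m>n(i-1)$, so the relevant Kuga bases have dimension $\leq n-n(i)$; your final arithmetic ($|p'-q'|\leq n-n(i)$ being equivalent to the Tate-twist-of-effective statement) is correct. The problem is the structural input you feed into this scheme.

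Concretely, the claim that each closed stratum $D_{F,\sigma}$ appearing in $D(m)$ is a smooth projective toric fibration over an abelian scheme $\mathcal{A}_F$ of dimension $n-n(i')$, with Tate-type fibre cohomology, trivial monodromy and Leray degeneration, is false. What the toroidal theory actually gives (Lemma \ref{lem: D(m) toroidal} and \S\ref{ssec: stratification}) is that the \emph{open} stratum ${\DFs}$ is a principal $({\C}^{\ast})^{\dim U(F)-m}$-bundle over the Kuga family $Y_F$, and $Y_F$ is an abelian fibration over the \emph{non-compact} cusp $X_F$; in particular $Y_F$ is not projective, and the closure ${\DFsbar}$ is not fibred over it or over any abelian scheme: its boundary consists of strata $D(F',\sigma')$ attached to deeper cusps $F'\prec F$, over which the purported fibration structure degenerates (already for a corank-one cusp of a Siegel threefold, ${\DFsbar}$ is a compactified universal elliptic curve with degenerate fibres over the $0$-dimensional cusps). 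Hence your assertion that every Hodge type of $H^{n-m}(D_{F,\sigma})$ is of the form $(\alpha+q,\beta+q)$ with $(\alpha,\beta)$ a type of $H^{r}(\mathcal{A}_F)$, $r+2q=n-m$, is not established; you flag this as the ``principal obstacle'', but it is not a matter of extracting the structure carefully from \cite{AMRT} --- the structure is simply not there for the closed strata. The paper's proof replaces the fibration/Leray step by a subquotient argument that needs no global fibration: the exact sequences of mixed Hodge structures for the stratification of ${\DFsbar}$ by the $D(F',\sigma')$, $F'\preceq F$ (Lemma \ref{lem: weight stratification}), combined with the Thom--Gysin argument for each torus bundle $D(F',\sigma')\to Y_{F'}$ (Lemma \ref{lem: weight torus bundle}). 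Your dimension count then still closes the argument, because for every stratum of ${\DFsbar}$ one has $U(F')\supseteq U(F)$, so $\dim Y_{F'}=n-\dim U(F')\leq n-n(i)$; but your write-up never accounts for these boundary contributions at all, and with the false fibration claim removed there is no control over $H^{n-m}({\DFsbar})$ left in your argument.
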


See Figure \ref{figure: Hodge terrace} for a (rotated) shape of the range of $(p, q)$ 
described in Theorem \ref{thm: VT intro}, 
which looks like stairs with steps in heights $n(1), \cdots, n(r)$. 
Here the bottom line is $p+q=n$, and the dotted roof is $p=n$ and $q=n$. 

\begin{figure}[h]\label{figure: Hodge terrace}
\includegraphics[height=36mm, width=72mm]{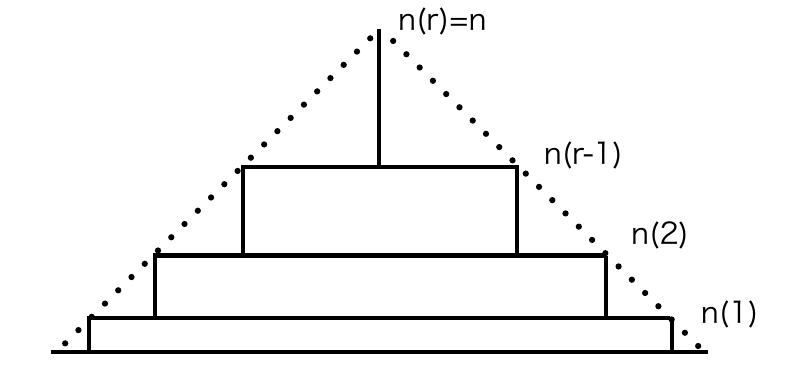}
\caption{Hodge stairs (the case $n=n(r)$)}
\end{figure}

A similar result holds for general degree $k$ (see Theorem \ref{thm: VT I}). 
%where we take intersection of a shift of the terrace ?? and a triangle. 
Moreover, when $k<n$, the edge Hodge numbers 
$h^{k,m}_k(X)$ with $m>0$ vanish (Proposition \ref{thm: VT II}), 
which generalizes a result of Oda-Schwermer \cite{OS1}. 
%unlike the case $k=n$. 
%In particular, we have ${\Gr}^W_{2k}H^k(X)={\Gr}^W_{2k-1}H^k(X)=0$. 

Our main focus is on the weight filtration $W_{\bullet}F^n=W_{\bullet}H^n(X) \cap F^nH^n(X)$ 
on the subspace $F^n=F^nH^n(X)$. 
(This is the right dotted roof in Figure \ref{figure: Hodge terrace}.)
Theorem \ref{thm: VT intro} implies that this filtration reduces to length $r$: 
\begin{equation}\label{eqn: weight filtration intro}
W_nF^n \subset W_{n+n(1)}F^n \subset W_{n+n(2)}F^n \subset \cdots \subset W_{n+n(r)}F^n =F^n. 
\end{equation}
On the other hand, by a theorem of Mumford \cite{Mu}, 
the space $F^n$ is naturally isomorphic to the space 
$M_{can}=M_{can}({\G})$ of ${\G}$-modular forms of canonical weight on ${\D}$, 
i.e., ${\G}$-invariant canonical forms on ${\D}$. 
In typical examples, the canonical weight is 
weight $g+1$ for ${\rm Sp}(2g, {\R})$, %(Siegel modular forms), 
weight $n$ for ${\rm O}(2, n)$, and  %(orthogonal modular forms), and 
weight $p+q$ for ${\rm U}(p, q)$, %(Hermitian modular forms when $p=q$). 
while when working in the full generality of locally symmetric varieties, 
it is the smallest common weight.
We have the \textit{corank filtration} on $M_{can}$: 
\begin{equation}\label{eqn: corank filtration intro}
M_{can}^{(0)} \subset M_{can}^{(1)} \subset M_{can}^{(2)} \subset \cdots \subset M_{can}^{(r)} = M_{can}, 
\end{equation}
where $M_{can}^{(i)}\subset M_{can}$ is the subspace of modular forms 
which vanish at all cusps of ${\D}$ of corank $>i$. 
In particular, $M_{can}^{(0)}$ is the space of cusp forms. 
For each cusp $F$ of corank $i$, we have the (cuspidal) Siegel operator 
\begin{equation*}
\Phi_{F} : M_{can}^{(i)} \to S\!_{cat}(F)
\end{equation*}
as the restriction operator to $F$, 
where $S\!_{cat}(F)$ is the space of cusp forms on $F$ of suitable weight, say $cat$, 
with respect to the integral stabilizer ${\GFZ}<{\G}$ of $F$. 
Let $\mathcal{C}(i)$ be the set of ${\G}$-equivalence classes of cusps of ${\D}$ of corank $i$. 
By taking the direct sum of $\Phi_{F}$ over all $F\in \mathcal{C}(i)$, 
we obtain the total cuspidal Siegel operator of corank $i$: 
\begin{equation*}
\Phi^{(i)}_{0} : M_{can}^{(i)} \to \bigoplus_{F\in \mathcal{C}(i)}S\!_{cat}(F). 
\end{equation*}

Our main result is comparison of the two filtrations 
\eqref{eqn: weight filtration intro}, \eqref{eqn: corank filtration intro} 
and calculation of the graded quotients 
(Theorem \ref{thm: weight=corank} and Theorem \ref{thm: CD}). 

\begin{theorem}\label{thm: main intro}
Under the natural isomorphism $F^nH^n(X)\simeq M_{can}({\G})$, 
the weight filtration \eqref{eqn: weight filtration intro} coincides with the corank filtration \eqref{eqn: corank filtration intro}. 
When $i>1$ or $i=1$ with $n(1)>1$, 
we have the following commutative diagram 
where all maps are isomorphisms: 
\begin{equation*}
\xymatrix{
W_{n+n(i)}F^n/W_{n+n(i-1)}F^n \ar[r]^-{{\res}} & \bigoplus_{F\in \mathcal{C}(i)}H^0(K_{\overline{Y}_{F}})^{{\GF}}  \\ 
M_{can}^{(i)}/M_{can}^{(i-1)} \ar[u]  \ar[r]^{\Phi^{(i)}_{0}} & \bigoplus_{F\in \mathcal{C}(i)}S\!_{cat}(F) \ar[u] 
}
\end{equation*}
\end{theorem}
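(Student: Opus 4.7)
The plan is to realize everything on a smooth toroidal compactification $\bar X$ of $X$ with reduced normal crossings boundary $D = \bar X \setminus X$. Mumford's theorem \cite{Mu} enhances the isomorphism $F^n H^n(X) \simeq M_{can}(\G)$ to an identification $F^n H^n(X) \simeq H^0(\bar X, \Omega^n_{\bar X}(\log D))$, under which the weight filtration $W_{\bullet} F^n$ corresponds to the pole-order filtration on log $n$-forms. Theorem~\ref{thm: VT intro} then immediately forces $\Gr_{n+m}^W F^n = 0$ unless $m \in \{0, n(1), \ldots, n(r)\}$, recovering the $r$-step filtration \eqref{eqn: weight filtration intro} on abstract grounds.

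\textbf{Top horizontal (residue) arrow.} Irreducible boundary strata of $\bar X$ correspond to $\G$-orbits of cones in the fans $\Sigma_F$, with a codimension-$n(i)$ stratum above a cusp of corank $i$ arising from a top-dimensional cone of $\Sigma_F$. The Poincar\'e residue produces a map
\[
\res \colon \Gr_{n+n(i)}^W F^n \longrightarrow \bigoplus_{Z} H^0(Z, \omega_Z),
\]
with $Z$ running over codimension-$n(i)$ strata. An inductive application of Theorem~\ref{thm: VT intro} to the strata above lower-corank cusps (where the codimension is strictly smaller than the local ``$n(i')$'' and so must contribute to a lower weight-graded piece that has been shown to vanish) eliminates those contributions. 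Organizing the remaining strata above a fixed corank-$i$ cusp $F$ as a toroidal covering of the compactified mixed Shimura variety $\overline{Y}_{F}$ and gluing via the residue recovers the top horizontal map onto $\bigoplus_{F \in \mathcal{C}(i)} H^0(K_{\overline{Y}_F})$.

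\textbf{Right vertical (iterated Mumford).} $\overline{Y}_F$ is itself a toroidal compactification of an abelian-type fibration over the locally symmetric base $X_F$ attached to the Hermitian quotient at $F$, so Mumford's theorem applied one dimension lower identifies $H^0(K_{\overline{Y}_F})$ with the subspace of $\G(F)$-modular forms of the induced canonical weight on $X_F$ that vanish on the further boundary of $\overline{Y}_F$; this cuspidality is precisely the definition of $S\!_{cat}(F)$. The exceptional case $i=1$, $n(1)=1$ is precisely where $\overline{Y}_F$ is a curve and an Eisenstein contribution may enter, which is why it must be excluded.

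\textbf{Commutativity and conclusion.} The technical heart, and the principal obstacle, is a local Fourier--Jacobi calculation near a codimension-$n(i)$ stratum above $F$. In a toroidal chart with coordinates $w_{\alpha} = \exp(2\pi i z_{\alpha})$ in the $U(F)_{\R}$-directions and stratum $w_1 = \cdots = w_{n(i)} = 0$, the logarithmic extension of $f \in M_{can}^{(i)}$ has the shape
\[
\omega_f = \tilde f \cdot \frac{dw_1}{w_1} \wedge \cdots \wedge \frac{dw_{n(i)}}{w_{n(i)}} \wedge \eta,
\]
with $\tilde f$ the Fourier--Jacobi expansion of $f$ along $U(F)_{\R}$. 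The vanishing condition defining $M_{can}^{(i)}$ translates directly into the pole-order bound $\leq n(i)$ for $\omega_f$; this simultaneously realizes $M_{can}^{(i)}/M_{can}^{(i-1)}$ inside $W_{n+n(i)}F^n/W_{n+n(i-1)}F^n$ and shows that every class in the latter is represented by some such $f$. The $n(i)$-fold iterated Poincar\'e residue now extracts the constant term of $\tilde f$, which is $\Phi_F(f)$ by definition; matching normalizations is a matter of tracking factors of $(2\pi i)^{n(i)}$ and reconciling Mumford's canonical-weight convention with the one used for $\Phi_F$. Once this local check is performed, globalizing over $\G$-orbits of cones and summing over cusps establishes both the identification of the two filtrations and the commutativity of the square; all four arrows are then isomorphisms.
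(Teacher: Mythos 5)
There is a genuine gap: your argument establishes at most the comparison of the two filtrations and the commutativity of the square, but never proves that any arrow is \emph{surjective}, and the final sentence ``all four arrows are then isomorphisms'' is a non sequitur. The local Fourier--Jacobi computation shows that the pole-order (weight) bound on $\omega_f$ corresponds to vanishing of the Siegel operators at higher-corank cusps and that the iterated residue computes $\Phi_F(f)$; this is exactly the content of the paper's Theorem \ref{thm: weight=corank}, and it only yields that $\Phi^{(i)}_{0}$ and ${\res}$ are \emph{injective} on the graded quotient. The heart of the matter, which your proposal omits, is the computation of ${\Gr}^{W}_{n+n(i)}F^n$ itself: one must show that the residue to a \emph{single} top-dimensional stratum $\overline{D(F,\sigma)}$ maps this graded piece isomorphically onto $H^0(K_{\overline{D(F,\sigma)}})$. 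In the paper this is Theorem \ref{thm: GrWFn}: the graded quotient is the kernel of the signed Gysin differential $d_1$ in the weight spectral sequence, and via the $\proj^1$-bundle structure of the codimension-one-face strata this differential is identified with $H^0(K_{\overline{Y}_F})$ tensored with the simplicial boundary map of the $\Delta$-complex $K(\Sigma_F)/{\G}_F$; the hypothesis $i>1$ (or $n(1)>1$), through $n(i)-n(i-1)>1$, forces every codimension-one cone of $\Sigma_F$ to be an $F$-cone, so this complex is a closed oriented pseudomanifold, its top homology is one-dimensional and generated by the fundamental class, and projection to any single component is bijective. Nothing in your local chart computation substitutes for this global combinatorial step, and without it surjectivity of ${\res}$ (hence of $\Phi^{(i)}_0$, which is the main application) is unproved.

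Two further points. First, your right vertical arrow is asserted via ``Mumford's theorem applied one dimension lower,'' but $\overline{Y}_F$ is a compactified Kuga (abelian) fibration over $X_F$, not a locally symmetric variety, so Mumford's isomorphism does not directly identify $H^0(K_{\overline{Y}_F})$ with $S\!_{cat}(F)$; that identification is a Shioda-type theorem, and the paper deliberately avoids assuming it: it constructs only an injection $\pi_F^{\ast}\colon S\!_{cat}(F)\hookrightarrow H^0(K_{\overline{D(F,\sigma)}})$ and obtains its bijectivity a posteriori from the diagram once ${\res}$ is known to be an isomorphism. Second, your explanation of the excluded case is wrong: when $i=1$, $n(1)=1$ the variety $\overline{Y}_F$ has dimension $n-1$ (it is not a curve in general), and the failure is combinatorial --- the dual complex is no longer closed, the Gysin differential lands in $H^{n,1}({\Xcpt})$, and $H^0(X,\Omega^{n-1})$ becomes the obstruction space (Propositions \ref{prop: n(1)=1} and \ref{prop: cokernel Siegel n(1)=1}), not an ``Eisenstein contribution on a curve.''
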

 
Here 
$\overline{Y}_{F}$ is a smooth projective compactification of a certain abelian fibration $Y_{F}$ attached to $F$, 
${\GF}$ is the non-Hermitian part of the Levi part of ${\GFZ}$, 
${\res}$ is a map induced by the residue maps, 
and the right vertical map is a collection of certain Shioda-type maps (cf.~\cite{Shioda} for a prototype). 

%When ${\D}$ has a tube domain realization, 
%Theorem \ref{thm: main intro} for the case $i=r$ implies that 
%$\dim {\Gr}^{W}_{2n}H^{n}(X)$ is equal to the number of $0$-dimensional cusps of $X$ (Corollary \ref{cor: 0-dim cusp}). 
%This generalizes some results of Oda \cite{Od}, Ziegler \cite{Fr} and Miyazaki \cite{Mi}. 

Three key points of the proof of Theorem \ref{thm: main intro} are 
construction of the Shioda-type maps $S\!_{cat}(F) \to H^0(K_{\overline{Y}_{F}})$ 
which connect the Siegel operators with the residue maps, 
the ${\GF}$-invariance of the Shioda-type maps, 
and independent calculation of ${\Gr}^{W}_{n+n(i)}F^n$. 
Comparison of the combinatorial structures in toroidal compactifications and mixed Hodge theory is essential.  
%In the latter, comparison of a part of the weight spectral sequence with 
%the chain complex of a certain simplicial complex will be crucial. 
%This eventually boils down to a compatibility of the simplicial structure in the weight spectral sequence 
%with that in the toroidal compactification.  

Theorem \ref{thm: main intro} has the following consequence on the Siegel operators. 

\begin{corollary}\label{cor: intro}
When $i>1$ or $i=1$ with $n(1)>1$, 
the cuspidal total Siegel operator $\Phi^{(i)}_{0}$ of corank $i$ is surjective. 
\end{corollary}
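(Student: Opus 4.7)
The plan is to deduce the corollary directly from the commutative diagram in Theorem \ref{thm: main intro}. First I would observe that, by the very definition of $M_{can}^{(i-1)}$ as the subspace of modular forms vanishing at every cusp of corank $>i-1$, this subspace is in particular contained in the kernel of every Siegel operator $\Phi_{F}$ with $F$ of corank $i$. Consequently $M_{can}^{(i-1)} \subset \ker \Phi^{(i)}_{0}$, and $\Phi^{(i)}_{0}$ factors through the quotient, inducing a map
\[
\overline{\Phi^{(i)}_{0}} : M_{can}^{(i)}/M_{can}^{(i-1)} \longrightarrow \bigoplus_{F\in\mathcal{C}(i)}S\!_{cat}(F).
\]

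Under the standing hypothesis $i>1$ or ($i=1$ with $n(1)>1$), Theorem \ref{thm: main intro} asserts that this induced map $\overline{\Phi^{(i)}_{0}}$ (the bottom row of the displayed diagram) is an isomorphism; indeed it appears there as the composite of one vertical isomorphism, the top horizontal isomorphism $\res$, and the inverse of the other vertical isomorphism. Since $\overline{\Phi^{(i)}_{0}}$ and $\Phi^{(i)}_{0}$ have the same image in $\bigoplus_{F\in\mathcal{C}(i)}S\!_{cat}(F)$, surjectivity of the former is equivalent to surjectivity of the latter, and the corollary follows.

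The entire substantive content therefore sits in Theorem \ref{thm: main intro}: the main obstacle is not in this deduction but in constructing the Shioda-type maps, identifying the weight filtration \eqref{eqn: weight filtration intro} with the corank filtration \eqref{eqn: corank filtration intro}, and verifying the commutativity of the square. Once Theorem \ref{thm: main intro} is in hand, the corollary is immediate, so no further estimation, vanishing argument, or construction of preimages is required at this stage.
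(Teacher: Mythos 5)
Your deduction is correct and matches the paper's own (implicit) argument: in the body of the paper, this corollary appears as Corollary \ref{cor: Siegel surjective}, stated without proof as an immediate consequence of the commutative diagram in Theorem \ref{thm: CD}, in which all maps, in particular the bottom arrow $\Phi^{(i)}_0$ on the quotient $M_{can}^{(i)}/M_{can}^{(i-1)}$, are isomorphisms. Your added observation that $M_{can}^{(i-1)}\subset\ker\Phi^{(i)}_0$ (in fact equality holds, by \eqref{eqn: corank filtration successive}) so that the induced map on the quotient has the same image, is exactly the small step needed to pass from that isomorphism to surjectivity of $\Phi^{(i)}_0$ on $M_{can}^{(i)}$, so the proposal is essentially the same as the paper's approach.
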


The remaining case $i=1$ with $n(1)=1$ is indeed an exception (cf.~\cite{SM}).  
In this case, we show that $H^0(X, \Omega_{X}^{n-1})$ provides 
an obstruction space for $\Phi^{(1)}_{0}$ to be surjective 
(Proposition \ref{prop: cokernel Siegel n(1)=1}). 

A standard way to study surjectivity of Siegel operators is to construct Eisenstein series. 
%of Siegel type at $0$-dimensional cusps and of Klingen type in general. 
However, Eisenstein series usually do not converge in the canonical weight, 
and a traditional approach in this situation is to use its analytic continuation (the so-called Hecke summation). 
Shimura \cite{Shimura} developed this method for $0$-dimensional cusps for 
$G={\rm Sp}(2g, {\R})$ and ${\rm SU}(p, p)$, 
and Weissauer \cite{We} obtained a best result for cusps of arbitrary corank for ${\G}={\rm Sp}(2g, {\Z})$. 
%where we have one cusp in each corank. 
In these cases, Corollary \ref{cor: intro} is covered by their results. 
Our method gives a new approach, based on mixed Hodge theory and not using Eisenstein series. 
Our result is for a particular weight, but applies uniformly to any locally symmetric variety. 

Mixed Hodge structures of locally symmetric varieties have been studied by 
Oda \cite{Od} and Ziegler \cite{Fr} for Hilbert modular varieties, 
Oda-Schwermer \cite{OS1}, \cite{OS2} and Hoffman-Weintraub \cite{HW1}, \cite{HW2} 
for Siegel modular $3$-folds and their inner forms, 
and Miyazaki \cite{Mi} for Siegel modular varieties. 
Oda and Miyazaki used Eisenstein series (by Hecke summation)  
to study the contribution from the $0$-dimensional cusps to the top graded piece ${\Gr}^{W}_{2n}H^n(X)$. 
Their works were the first sign of the relation between the weight filtration and the Siegel operators. 
On the other hand, 
Ziegler and Hoffman-Weintraub calculated ${\Gr}^{W}_{2n}H^n(X)$ via simplicial homology. 
Our approach is a mixture of these two methods, 
generalized to reveal the connection between lower graded quotients and cusps of positive dimension 
by using the Shioda-type maps. 
The use of Eisenstein series is bypassed: in a sense, the order is reversed. 

As further related works, 
we also mention the papers of 
Harris-Zucker \cite{HZII}, Zucker \cite{Zu}, Nair \cite{Na} and Odaka-Oshima \cite{OO} 
who studied the mixed Hodge structures of locally symmetric varieties  
from the viewpoint of (reductive) Borel-Serre compactifications. 
 
The rest of this paper is organized as follows. 
In \S \ref{sec: MHS}, we recall mixed Hodge theory. 
In \S \ref{sec: toroidal}, we recall the theory of toroidal compactifications. 
In \S \ref{sec: VT I}, we prove the first vanishing theorem (Theorem \ref{thm: VT I}).  
In \S \ref{sec: VT II}, we prove the second vanishing theorem (Proposition \ref{thm: VT II}). 
In \S \ref{sec: weight Siegel}, we prove the coincidence  
\eqref{eqn: weight filtration intro} = \eqref{eqn: corank filtration intro} of the filtrations, 
and also prove the ${\GF}$-invariance of the Shioda-type maps. 
In \S \ref{sec: GrW}, we calculate ${\Gr}^{W}_{n+n(i)}F^n$. 
In \S \ref{sec: non-neat}, we extend some of our main results to non-neat ${\G}$. 
In \S \ref{sec: example}, we explain our results more explicitly for some classical domains ${\D}$. 
In Appendix \ref{sec: SNC}, we give a proof of the existence of a toroidal compactification with SNC boundary divisor, 
a folklore result whose proof is scattered or only implicit in the literature. 

%The logical dependence between the sections is as follows. 
%\begin{equation*}
%\xymatrix{
%\S \ref{sec: MHS} \ar[r] \ar[rd] & \S \ref{sec: VT I}\ar[r] & 
%\S \ref{sec: weight Siegel}  \ar[d] \ar[r] & \S \ref{sec: example}   \\ 
%\S \ref{sec: toroidal} \ar[ru] \ar[r] &\S \ref{sec: VT II} \ar[r] & \S \ref{sec: non-neat}  & 
%}
%\end{equation*}

\section{Mixed Hodge theory}\label{sec: MHS}

In this section we recall mixed Hodge theory following \cite{DeII}, \cite{DeIII}, \cite{PS}. 

\subsection{Mixed Hodge structures} 

A (${\Q}$-)mixed Hodge structure is a triplet $(V, W_{\bullet}, F^{\bullet})$ 
where $V$ is a ${\Q}$-linear space of finite dimension, 
$W_{\bullet}$ is an increasing filtration on $V$ (the weight filtration), 
$F^{\bullet}$ is a decreasing filtration on $V_{{\C}}$ (the Hodge filtration), 
such that the filtration induced by $F^{\bullet}$ on 
${\Gr}_{m}^{W}V_{{\C}}=W_{m}V_{{\C}}/W_{m-1}V_{{\C}}$ 
defines a pure Hodge structure of weight $m$. 
The Hodge numbers of $(V, W_{\bullet}, F^{\bullet})$ are defined by 
\begin{equation*}
h^{p,q}(V) = \dim {\Gr}_{F}^{p}{\Gr}_{p+q}^{W} V_{{\C}}. 
\end{equation*}
By Deligne \cite{DeII}, \cite{DeIII}, the cohomology $H^k(X, {\Q})$ of a complex algebraic variety $X$ 
has a canonical mixed Hodge structure. 
In this paper we will be mainly interested in the case when $X$ is smooth. 
Let us recall the construction of the filtrations $(W_{\bullet}, F^{\bullet})$ in this case.  

Let $X$ be a smooth complex algebraic variety of dimension $n$. 
We take a smooth algebraic compactification $X\hookrightarrow \bar{X}$ such that 
the boundary $D=\bar{X}-X$ is a divisor with simple normal crossings. 
Let $\Omega_{\bar{X}}^{\bullet}(\log D)$ be the logarithmic de Rham complex of $(\bar{X}, D)$. 
Then we have a natural isomorphism 
$H^{k}(X, {\C}) \simeq \mathbb{H}^k(\bar{X}, \Omega_{\bar{X}}^{\bullet}(\log D))$. 
The weight filtration on the complex $\Omega_{\bar{X}}^{\bullet}(\log D)$ is defined by 
\begin{equation*}
W_m \Omega_{\bar{X}}^{p}(\log D) = 
\begin{cases}
\; 0 & \; m<0 \\
\; \Omega_{\bar{X}}^{p}(\log D) & \; m\geq p \\
\; \Omega_{\bar{X}}^{p-m}\wedge \Omega_{\bar{X}}^{m}(\log D) & \; 0\leq m \leq p
\end{cases}
\end{equation*}
We also let $(F^{p}\Omega_{\bar{X}}^{\bullet}(\log D))_{p}$ be the trivial decreasing filtration on 
the complex $\Omega_{\bar{X}}^{\bullet}(\log D)$. 
Then the weight filtration on $H^k(X)=H^k(X, {\C})$ is defined by 
\begin{equation*}
W_{k+m}H^k(X) = 
{\rm Im}(\mathbb{H}^k(\bar{X}, W_{m}\Omega_{\bar{X}}^{\bullet}(\log D)) 
\to \mathbb{H}^k(\bar{X}, \Omega_{\bar{X}}^{\bullet}(\log D))), 
\end{equation*} 
and the Hodge filtration is defined by 
\begin{equation*}
F^pH^k(X) = 
{\rm Im}(\mathbb{H}^k(\bar{X}, F^{p}\Omega_{\bar{X}}^{\bullet}(\log D)) 
\to \mathbb{H}^k(\bar{X}, \Omega_{\bar{X}}^{\bullet}(\log D))). 
\end{equation*} 
We have the weight spectral sequence 
\begin{equation*}
\!_{W}E_{1}^{p,q} = \mathbb{H}^{p+q}(\bar{X}, {\Gr}^{W}_{-p}\Omega_{\bar{X}}^{\bullet}(\log D)) 
\; \; \Rightarrow \; \; 
\!_{W}E_{\infty}^{p+q} = H^{p+q}(X) 
\end{equation*}
which is a second-quadrant spectral sequence converging to the weight filtration. 
We also have the Hodge spectral sequence 
\begin{equation*}
\!_{F}E_{1}^{p,q} = H^{q}(\bar{X}, \Omega_{\bar{X}}^{p}(\log D)) 
\; \; \Rightarrow \; \; 
\!_{F}E_{\infty}^{p+q} = H^{p+q}(X) 
\end{equation*}
which is a first-quadrant spectral sequence converging to the Hodge filtration. 
It is a fundamental theorem of Deligne \cite{DeII} that 
the weight spectral sequence degenerates at the $E_{2}$ page, 
the Hodge spectral sequence degenerates at the $E_{1}$ page, 
and $(W_{\bullet}, F^{\bullet})$ defines a mixed Hodge structure on $H^{k}(X, {\Q})$.

\subsection{Weight spectral sequence}\label{ssec: weight ss}

We recall the explicit description of 
the $E_1$ to $E_2$ pages of the weight spectral sequence following \cite{DeII} and \cite{PS} Chapter 4. 
In what follows, we abbreviate $\!_{W}E_{r}^{p,q}=E_{r}^{p,q}$. 
It is sometimes convenient to switch from the index $(p, q)$ to $(k, m)$ by 
\begin{equation*}\label{eqn: (p,q) (m,k)} 
(k, m) = (p+q, -p) \: \Leftrightarrow \: (p, q)=(-m, k+m). 
\end{equation*}

We denote by $D=\sum_{i}D_{i}$ the irreducible decomposition of $D$, 
where the irreducible components are numbered. 
Let $0\leq m \leq n$. 
For an index set $I=\{ i_1< \cdots <i_m \}$ with $|I|=m$, we write 
\begin{equation*}
D_{I} = D_{i_1}\cap \cdots \cap D_{i_m}. 
\end{equation*}
By the SNC condition, $D_{I}$ is a smooth subvariety of $\bar{X}$ of codimension $m$ 
if it is not empty. 
We denote by $D_{I}'=\sum_{i\not\in I} D_{i}|_{D_{I}}$ the boundary divisor of $D_{I}$ itself. 
We have the residue map 
\begin{equation*}
{\res}_{I} : \Omega_{\bar{X}}^{p}(\log D)\to \Omega_{D_{I}}^{p-m}(\log D_{I}'), 
\end{equation*}
which is locally defined by 
\begin{equation*}
\frac{dz_1}{z_1}\wedge \cdots \wedge \frac{dz_m}{z_m} \wedge \eta +\eta'   \mapsto (2\pi i)^m\eta |_{D_{I}}, 
\end{equation*}
where $z_1, \cdots, z_m$ are local equations of $D_{i_1}, \cdots, D_{i_m}$. 
On the subsheaf $W_{m}\Omega_{\bar{X}}^{p}(\log D)$, this restricts to 
\begin{equation*}
{\res}_{I} : W_{m}\Omega_{\bar{X}}^{p}(\log D)\to \Omega_{D_{I}}^{p-m}. 
\end{equation*}
Let 
\begin{equation}\label{eqn: D(m)}
D(m) = \bigsqcup_{|I|=m}D_{I} 
\end{equation}
and $a_m\colon D(m)\to \bar{X}$ be the natural map. 
Then the residue maps ${\res}_{I}$ with $|I|=m$ induce an isomorphism 
\begin{equation*}\label{eqn: resm}
{\res}_{m} : {\Gr}_{m}^{W} \Omega_{\bar{X}}^{\bullet}(\log D) 
\stackrel{\simeq}{\to} (a_{m})_{\ast}\Omega_{D(m)}^{\bullet}[-m] 
\end{equation*}
of complexes of sheaves on $\bar{X}$. 
This defines an isomorphism 
\begin{equation}\label{eqn: weight E1}
E_{1}^{-m,k+m} \simeq H^{k-m}(D(m))(-m).  
\end{equation}
Here $V(-m)$ stands for the $(-m)$-th Tate twist of a pure Hodge structure $V$. 

By \eqref{eqn: weight E1}, the differential 
\begin{equation*}
d_1\colon E_{1}^{-m,k+m}\to E_{1}^{-m+1,k+m} 
\end{equation*}
in the weight spectral sequence is a map of the form 
\begin{equation*} 
d_1 : H^{k-m}(D(m))(-m) \to H^{k-m+2}(D(m-1))(-m+1). 
\end{equation*}  
On each component $H^{k-m}(D_{I})$ of $H^{k-m}(D(m))$ where $I=\{ i_1< \cdots <i_m \}$, 
the map $-d_{1}$ is identified with 
\begin{equation}\label{eqn: Gysin sign}
\bigoplus_{j=1}^{m}(-1)^{j-1} \rho^{I}_{j} \: : \: 
H^{k-m}(D_{I}) \to \bigoplus_{j=1}^{m} H^{k-m+2}(D_{I \backslash \{ i_{j} \}})(1), 
\end{equation}
where 
$\rho^{I}_{j}$ is the Gysin map for the inclusion map $D_{I}\hookrightarrow D_{I \backslash \{ i_{j} \}}$. 
Since the differentials $d_1$ are morphisms of pure Hodge structures, 
we have a pure Hodge structure on the subquotient 
\begin{equation}\label{eqn: weight ss E2}
E_{2}^{-m,k+m} = E_{\infty}^{-m,k+m} \simeq {\Gr}_{k+m}^{W}H^k(X). 
\end{equation}
This coincides with the Hodge structure on ${\Gr}_{k+m}^{W}H^k(X)$ induced by the Hodge filtration $F^{\bullet}$. 
%
%Since $E_{1}^{-m,k+m}$ is a Tate twist of $H^{k-m}(D(m))$, 
Therefore ${\Gr}_{k+m}^{W}H^k(X)$ is a Tate twist of a subquotient of $H^{k-m}(D(m))$. 
%Note that when $k-m> \dim D(m)$, $H^{k-m}(D(m))$ is a Tate twist of an effective Hodge structure of weight 
%$2 \dim D(m) - (k-m) = 2n-k-m$. 
%(Here we say that a pure Hodge structure $V$ is \textit{effective} 
%if in the Hodge decomposition $V=\bigoplus_{p,q}V^{p,q}$ 
%only indices $(p, q)$ with $p\geq 0$, $q\geq 0$ appear.) 
%This shows that 
%${\Gr}_{k+m}^{W}H^k(X)$ is a Tate twist of an effective Hodge structure 
%of weight $k-m$ when $k\leq n$, and of weight $2n-k-m$ when $k>n$.  
This shows that the Hodge number 
$h_{k}^{p,q}(X)=h^{p,q}(H^k(X))$ of $H^k(X)$ 
can be nonzero only in the triangle 
\begin{equation}\label{eqn: (p,q) range}
p+q\geq k, \quad p\leq \min(k, n), \quad q\leq \min(k, n)  
\end{equation}
(see \cite{PS} Theorem 5.39). 
In particular, we have ${\Gr}^{W}_{k+m}H^k(X)\ne 0$ only when 
$k\leq k+m \leq \min(2k, 2n)$.

\subsection{Weight filtration on $F^nH^n(X)$}\label{ssec: WFkHk} 

In \S \ref{sec: weight Siegel} and \S \ref{sec: GrW}, we will be interested in the weight filtration on $F^nH^n(X)$. 
In what follows, we let $k=n$ and collect some well-known descriptions for later use. 
Although they are valid for general $k\leq n$, 
we specialize to the case $k=n$ for the convenience of reference. 
In this case, 
$\Omega^{n}_{\bar{X}}(\log D)=K_{\bar{X}}(D)$ is the logarithmic canonical bundle.  
The residue map ${\res}_{I}$ gives an isomorphism  
\begin{equation}\label{eqn: log adjunction general}
K_{\bar{X}}(D)|_{D_{I}} \simeq K_{D_{I}}(D_{I}').  
\end{equation}
This is nothing else but the well-known adjunction isomorphism. 

By the $E_1$-degeneration of the Hodge spectral sequence, 
we have a natural isomorphism 
\begin{equation}\label{eqn: FnHn K+D}
F^nH^n(X)\simeq H^0(\bar{X}, K_{\bar{X}}(D)). 
\end{equation}
The weight filtration 
$W_{\bullet}F^nH^n(X)=W_{\bullet}H^n(X)\cap F^nH^n(X)$ 
on $F^nH^n(X)$ coincides with 
the filtration on $H^0(K_{\bar{X}}(D))$ induced by the weight filtration 
$W_{\bullet} K_{\bar{X}}(D)$ on the sheaf $K_{\bar{X}}(D)$: 
\begin{equation*}\label{eqn: weight on F^kH^k}
W_{n+m}F^nH^n(X) = 
H^0(\bar{X},  W_{m}K_{\bar{X}}(D)), \qquad 0\leq m \leq n. 
\end{equation*}
(This can be seen, e.g., from \cite{PS} Theorem 3.12 (3).) 
The space $H^0(\bar{X},  W_{m}K_{\bar{X}}(D))$ 
consists of logarithmic canonical forms on $\bar{X}$ 
whose residue at every boundary stratum $D_{I}$ of codimension $|I|>m$ vanishes. 
Therefore its next subspace $H^0(\bar{X},  W_{m-1}K_{\bar{X}}(D))$ 
can be characterized inside $H^0(\bar{X},  W_{m}K_{\bar{X}}(D))$ 
as the kernel of the residue map to $D(m)$: 
\begin{equation}\label{eqn: WmFk residue kernel}
{W}_{n+m-1}F^nH^n(X) = 
\ker ( \, {W}_{n+m}F^nH^n(X) \stackrel{{\res}_{m}}{\to} H^0(D(m), K_{D(m)}) \, ). 
\end{equation}
As for the graded quotient ${\Gr}^{W}_{n+m}F^nH^n(X)$, 
we have a natural isomorphism 
\begin{equation*}
{\Gr}^{W}_{n+m}F^nH^n(X) \simeq  F^n{\Gr}^{W}_{n+m}H^n(X). 
\end{equation*}
By the weight spectral sequence and the strictness of morphisms of Hodge structures, 
we see that $F^n{\Gr}^{W}_{n+m}H^n(X)$ is the cohomology of the complex 
\begin{equation*}
F^{n-m-1}H^{n-m-2}(D(m+1))  \to  F^{n-m}H^{n-m}(D(m)) \to  F^{n-m+1}H^{n-m+2}(D(m-1)). 
\end{equation*}
(We omit to write the Tate twists.) 
The first term vanishes because the Hodge level $n-m-1$ is larger than the cohomology degree $n-m-2$. 
Therefore we have a natural isomorphism 
\begin{equation}\label{eqn: GrWF kernel}
{\Gr}^{W}_{n+m}F^nH^n(X) \: \simeq \: 
\ker ( \, H^0(D(m), K_{D(m)}) \to H^{n-m+1,1}(D(m-1)) \, ). 
\end{equation}
The composition 
\begin{equation*}
W_{n+m}F^nH^n(X) \twoheadrightarrow {\Gr}^{W}_{n+m}F^nH^n(X) 
\hookrightarrow H^0(D(m), K_{D(m)}) 
\end{equation*}
is nothing but the residue map ${\res}_{m}$ 
(as this is a process of going back from the $E_{2}$ to $E_{1}$ page, 
which amounts to sending sections of $W_{m}K_{\bar{X}}(D)$ 
to the quotient sheaf ${\rm Gr}^{W}_{m}K_{\bar{X}}(D)$).

\section{Toroidal compactifications}\label{sec: toroidal}

In this section we recall the theory of toroidal compactifications of locally symmetric varieties following \cite{AMRT}.

\subsection{Baily-Borel compactification}\label{ssec: BB}

Let ${\D}$ be a Hermitian symmetric domain and 
$\mathbb{G}$ be a connected semi-simple linear algebraic group over ${\Q}$ such that 
the identity component $G=\mathbb{G}({\R})^{\circ}$ of its real points 
is the identity component of ${\aut}({\D})$. 
Let ${\G}$ be an arithmetic subgroup of $G$. 
We assume throughout \S \ref{sec: toroidal} -- \S \ref{sec: GrW} that ${\G}$ is neat. 
(See \S \ref{sec: non-neat} for the non-neat case.) 
Let 
$X={\D}/{\G}$ 
be the locally symmetric variety defined by ${\G}$. 
A rational boundary component of ${\D}$ is called a \textit{cusp} of ${\D}$. 
Let ${\D}^{\ast}$ be the union of ${\D}$ and its cusps, equipped with the Satake topology. 
For two cusps $F\ne F'$, we write $F'\prec F$ if $F'$ is contained in the closure of $F$. 
We write $F'\preceq F$ when we want to leave the possibility $F=F'$. 

By Baily-Borel \cite{BB}, the quotient 
\begin{equation*}
X^{bb}={\D}^{\ast}/{\G} 
\end{equation*}
has the structure of a normal projective variety which contains $X$ as a Zariski open set. 
We assume throughout this paper that 
the boundary of $X^{bb}$ has codimension $\geq 2$,  
%so that the Koecher principle holds (\cite{Ba}). 
or equivalently, 
$\mathbb{G}$ has no simple ${\Q}$-factor isomorphic to ${\rm SL}_{2}/{\Q}$ 
(\cite{BB} Proposition 3.15). 
For a cusp $F$ of ${\D}$, we denote by $X_{F}$ the image of $F$ in $X^{bb}$, 
which is a locally closed subset of $X^{bb}$. 
Sometimes we use the terminology ``cusp'' also for $X_{F}$. 
%We have 
%\begin{equation*}
%X_{F} \simeq F/{\GFZ} 
%\end{equation*}
%where ${\GFZ}$ is the stabilizer of $F$ in ${\G}$.  
If we denote by $\overline{X}_{F}$ the closure of $X_{F}$ in $X^{bb}$, 
we have a natural morphism $X_{F}^{bb}\to \overline{X}_{F}$ from 
the Baily-Borel compactification $X_{F}^{bb}$ of $X_{F}$ itself. 
This gives the normalization of $\overline{X}_{F}$. 

Let $K_{X}$ be the canonical bundle of $X$. 
This is the descent of the canonical bundle $K_{{\D}}$ of ${\D}$, 
which is a $G$-equivariant line bundle in a natural way. 
In the literature, $K_{X}$ or $K_{{\D}}$ is sometimes referred to as the automorphic line bundle of canonical weight. 
By Baily-Borel \cite{BB} (see also \cite{Mu} Proposition 3.4 (b)), 
$K_{X}$ extends to an ample line bundle on $X^{bb}$, 
which we will denote by ${\LL}$. 
Since $X^{bb}$ is normal and the boundary of $X^{bb}$ is assumed to be of codimension $\geq 2$, 
this extension is unique.

\subsection{Siegel domain realization}\label{ssec: Siegel domain}

Let $F$ be a cusp of ${\D}$. 
We denote by $N(F)$ the stabilizer of $F$ in $G$, 
$W(F)$ the unipotent radical of $N(F)$, 
$U(F)$ the center of $W(F)$, and 
$V(F)=W(F)/U(F)$. 
Then $U(F)$ and $V(F)$ are ${\R}$-linear spaces, 
and $U(F)$ is normal in $N(F)$. 
Hence $N(F)$ acts on $U(F)$ by conjugation. 
We use the notations 
\begin{equation*}
{\GFZ}=N(F)\cap {\G}, \quad 
{\UFZ}=U(F)\cap {\G}, \end{equation*}
\begin{equation*}
{\GFZdash}={\ker}({\GFZ}\to {\aut}(U(F))), \quad 
{\GF} = {\rm Im}({\GFZ}\to {\aut}(U(F))),  
\end{equation*}
\begin{equation*}
{\GFZbar} = {\GFZ}/{\UFZ}, \quad 
{\GFZbd} = {\GFZdash}/{\UFZ}. 
\end{equation*}

The linear space $U(F)$ contains an open homogeneous cone $C(F)$ preserved by $N(F)$. 
If $F'$ is another cusp with $F'\succ F$, 
then $U(F')\subset U(F)$ and $C(F')$ is a rational boundary component of $C(F)$. 
We denote by $C(F)^{\ast}\subset U(F)$ the union of $C(F)$ and 
all such rational boundary components (including $\{ 0 \}$ for which $F'={\D}$ by convention). 

According to \cite{AMRT} \S III.4.3, 
there is a complex manifold ${\D}(F)$ containing ${\D}$ as an analytic open set 
and acted on by $U(F)_{{\C}}\cdot N(F)$, 
and a $U(F)_{{\C}}\cdot N(F)$-equivariant two-step fibration 
\begin{equation}\label{eqn: Siegel domain}
{\D}(F) \to {\VF} \to F, 
\end{equation}
%and an $N(F)$-equivariant map\footnote{
%No confusion will likely to occur with the notation for Siegel operators. 
%} 
%$\Phi_{F}\colon {\D}(F)\to U(F)$, 
such that 
${\D}(F) \to {\VF}$ is a principal $U(F)_{{\C}}$-bundle and 
${\VF} \to F$ is an affine space bundle whose fibers are acted on by $V(F)$ freely and transitively. 
Moreover, there is a $U(F)_{{\C}}\cdot N(F)$-equivariant map ${\D}(F)\to U(F)$ 
such that ${\D}\subset {\D}(F)$ is the inverse image of $C(F)$. 
Thus the fiber of ${\D}\subset {\D}(F)$ over each point of ${\VF}$ is isomorphic to the tube domain 
$U(F)+iC(F)\subset U(F)_{{\C}}$ up to a choice of a base point. 
The two-step fibration \eqref{eqn: Siegel domain} is the \textit{Siegel domain realization} of ${\D}$ 
(of the third kind) with respect to $F$. 

We have $X_{F} \simeq F/{\GFZ}$. 
The group ${\GFZbd}$ acts on ${\VF}$ freely, 
and the quotient 
\begin{equation*}
Y_{F} = {\VF}/{\GFZbd} 
\end{equation*}
is an abelian fibration over the finite cover $F/{\GFZdash}$ of $X_{F}$ 
(cf.~\cite{AMRT} p.174).

\subsection{Toroidal compactifications}\label{ssec: toroidal}

A toroidal compactification of $X$ can be constructed by choosing 
a ${\G}$-admissible collection of fans $\Sigma=(\Sigma_{F})_{F}$ in the sense of 
\cite{AMRT} Definition III.5.1. 
Each fan $\Sigma_{F}$ is a ${\G}_{F}$-admissible rational polyhedral cone decomposition of $C(F)^{\ast}$, 
with the ${\Z}$-structure of $U(F)$ given by ${\UFZ}$. 
(In our convention, polyhedral cones are closed.) 
Let 
\begin{equation*}
{\XF} = {\D}/{\UFZ}, \quad \mathcal{T}(F)={\D}(F)/{\UFZ}, \quad T(F)=U(F)_{{\C}}/{\UFZ}. 
\end{equation*}
Then $T(F)$ is an algebraic torus, $\mathcal{T}(F)$ is a principal $T(F)$-bundle over ${\VF}$, 
and ${\XF}$ is an analytic open set of $\mathcal{T}(F)$. 
Let $T(F)^{\Sigma_{F}}$ be the torus embedding of $T(F)$ defined by the fan $\Sigma_{F}$ and 
let $\mathcal{T}(F)^{\Sigma_{F}}$ be the fiber bundle $\mathcal{T}(F)\times_{T(F)} T(F)^{\Sigma_{F}}$. 
Then let ${\XFcpt}$ be the interior of the closure of ${\XF}$ in $\mathcal{T}(F)^{\Sigma_{F}}$. 
This is the partial toroidal compactification in the direction of $F$. 
At each fiber over ${\VF}$, we are taking the interior of the closure of 
$(U(F)+iC(F))/{\UFZ}\subset T(F)$ in $T(F)^{\Sigma_{F}}$. 

The torus embedding $T(F)^{\Sigma_{F}}$ is stratified into $T(F)$-orbits (cf.~\cite{AMRT} \S I.1): 
\begin{equation*}
T(F)^{\Sigma_{F}} = \bigsqcup_{\sigma\in \Sigma_{F}} \mathbb{O}(\sigma). 
\end{equation*}
Each stratum $\mathbb{O}(\sigma)$ corresponds to a cone $\sigma$ in $\Sigma_{F}$ 
and is isomorphic to the quotient torus of $T(F)$ by the sub torus defined by the span of $\sigma$. 
Accordingly, $\mathcal{T}(F)^{\Sigma_{F}}$ has a stratification 
\begin{equation*}
\mathcal{T}(F)^{\Sigma_{F}} = \bigsqcup_{\sigma\in \Sigma_{F}} \mathcal{T}(F)^{\sigma}, 
\end{equation*}
where each stratum $\mathcal{T}(F)^{\sigma}$ is a principal $\mathbb{O}(\sigma)$-bundle over ${\VF}$. 
Taking the intersection with ${\XFcpt}$, 
this defines a stratification of ${\XFcpt}$: 
\begin{equation*}
{\XFcpt} = \bigsqcup_{\sigma\in \Sigma_{F}} {\SFs} 
\end{equation*}
where 
${\SFs}= {\XFcpt}\cap \mathcal{T}(F)^{\sigma}$. 
%Each stratum ${\SFs}$ is of codimension $\dim \sigma$ in ${\XFcpt}$. 

We say that a cone $\sigma\in \Sigma_{F}$ is an \textit{$F$-cone} 
if the interior of $\sigma$ is contained in $C(F)$, or equivalently, 
$\sigma \cap C(F)\ne \emptyset$. 
In this case, 
$\mathbb{O}(\sigma)$ is contained in the interior of the closure of 
$(U(F)+iC(F))/{\UFZ}$ in $T(F)^{\Sigma_{F}}$ 
by the description of $\mathbb{O}(\sigma)$ in \cite{AMRT} \S I.1, 
so we have $\mathcal{T}(F)^{\sigma}\subset {\XFcpt}$. 
Thus, if $\sigma$ is an $F$-cone, 
${\SFs}=\mathcal{T}(F)^{\sigma}$ is a principal $\mathbb{O}(\sigma)$-bundle over ${\VF}$. 
We denote by $\Sigma_{F}^{\circ}\subset \Sigma_{F}$ the subset of $F$-cones. 

If $F'$ is another cusp with $F'\succ F$, 
the inclusion $U(F')_{{\Z}}\subset {\UFZ}$ defines a free quotient map $\mathcal{X}(F')\to {\XF}$ 
by ${\UFZ}/U(F')_{{\Z}}$. 
This extends to an \'etale map $\mathcal{X}(F')^{\Sigma_{F'}}\to {\XFcpt}$. 
This maps a boundary stratum 
$S\!(F', \sigma')$ of $\mathcal{X}(F')^{\Sigma_{F'}}$ 
onto the boundary stratum 
$S\!(F, \sigma')$ of ${\XFcpt}$ 
where we view $\sigma'\in \Sigma_{F'}$ as a (boundary) cone of $\Sigma_{F}$ 
by the inclusion $\Sigma_{F'}\subset \Sigma_{F}$ (cf.~\cite{AMRT} Proof of Lemma III.5.5). 

Now the toroidal compactification of $X$ is defined by 
\begin{equation*}
{\Xcpt} = \left( {\D}\sqcup \bigsqcup_{F}{\XFcpt} \right) / \sim 
\end{equation*}
where $F$ ranges over all cusps of ${\D}$ 
and $\sim$ is the equivalence relation generated by the following \'etale maps: 
\begin{itemize}
\item The ${\G}$-action ${\D}\to {\D}$ and ${\XFcpt}\to \mathcal{X}(\gamma F)^{\Sigma_{\gamma F}}$.  
\item The gluing maps ${\D}\to {\XFcpt}$ and $\mathcal{X}(F')^{\Sigma_{F'}}\to {\XFcpt}$ for $F'\succ F$. 
\end{itemize}
By \cite{AMRT} Theorem III.5.2, ${\Xcpt}$ is a compact Moishezon space 
containing $X$ as a Zariski open set, 
and we have a morphism 
$\pi\colon {\Xcpt}\to X^{bb}$ to the Baily-Borel compactification 
which extends the identity of $X$. 

We may choose $\Sigma$ so that ${\Xcpt}$ is smooth and projective 
(\cite{AMRT} Corollary IV.2.4). 
Here the smoothness is assured by requiring that 
every cone $\sigma\in \Sigma_{F}$ is generated by a part of a basis of ${\UFZ}$, 
and the projectivity is assured by requiring the existence of the so-called polarization functions 
(\cite{AMRT} Definition IV.2.1). 
We call such $\Sigma$ smooth and projective respectively. 
In this case, the boundary $D$ of ${\Xcpt}$ is a divisor with normal crossings. 
Then, if ${\LL}$ is the automorphic line bundle of canonical weight extended over $X^{bb}$, 
Mumford proved that 
\begin{equation}\label{eqn: L = K+D}
\pi^{\ast}{\LL} \simeq K_{{\Xcpt}}(D) 
\end{equation}
(\cite{Mu} Proposition 3.4). 
We may choose $\Sigma$ so that $D$ is furthermore simple normal crossing 
(see Appendix \ref{sec: SNC}). 
In the rest of this paper, we will assume that $\Sigma$ is chosen so.

\section{Vanishing of Hodge numbers I}\label{sec: VT I}

Let $X={\D}/{\G}$ be a locally symmetric variety as in \S \ref{sec: toroidal} with ${\G}$ neat. 
We write $n=\dim X$. 
In this section we study the mixed Hodge structure on $H^k(X)$ for general $k$.  
As recalled in \S \ref{ssec: weight ss}, ${\Gr}^{W}_{k+m}H^{k}(X)$ can be nonzero 
only in the range $k\leq k+m \leq \min(2k, 2n)$. 
The first subspace ${\Gr}^{W}_{k}H^{k}(X)=W_{k}H^{k}(X)$ is the image of the natural map 
$H^k_{(2)}(X)\to H^k(X)$ from the $L^2$-cohomology of $X$ (\cite{HZIII} Remark 5.5). 
While this part usually encodes rich information, 
the higher graded pieces ${\Gr}^{W}_{k+m}H^{k}(X)$, $m>0$, also reflect the geometry of $X$. 
%They would be in some sense more approachable, especially for larger $m$. 
Let us begin by recalling the following well-known property. 

\begin{lemma}[cf.~\cite{HZIII}]\label{lem: pure BB}
Let $c$ be the codimension of the boundary of the Baily-Borel compactification $X^{bb}$. 
If $k<c$, we have $H^{k}(X)=W_{k}H^{k}(X)$, i.e., 
the mixed Hodge structure on $H^k(X)$ is pure of weight $k$. 
\end{lemma}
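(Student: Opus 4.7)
The plan is to compare $H^k(X)$ with the cohomology of the Baily-Borel compactification $X^{bb}$, exploiting the fact that $X^{bb}$ is projective (so $H^k(X^{bb})$ has weights $\leq k$) while the boundary $\partial := X^{bb}\setminus X$ has codimension $\geq c$ by hypothesis.

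First, I would observe that the restriction map $r\colon H^k(X^{bb},{\Q})\to H^k(X,{\Q})$ is a morphism of mixed Hodge structures whose image lies in $W_kH^k(X)$. This is formal: factoring $r=j^{\ast}\circ\pi^{\ast}$ through the smooth projective toroidal compactification, with $j\colon X\hookrightarrow{\Xcpt}$ and $\pi\colon {\Xcpt}\to X^{bb}$, we have $H^k({\Xcpt})$ pure of weight $k$ (since ${\Xcpt}$ is smooth projective), and the strictness of the weight filtration under $j^{\ast}$ places $j^{\ast}(H^k({\Xcpt}))$ inside $W_kH^k(X)$.

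It therefore suffices to show that $r$ is surjective when $k<c$. The long exact sequence of cohomology with support in $\partial$,
\begin{equation*}
\cdots \to H^k(X^{bb},{\Q}) \xrightarrow{r} H^k(X,{\Q}) \to H^{k+1}_{\partial}(X^{bb},{\Q}) \to \cdots ,
\end{equation*}
reduces this to the vanishing $H^{k+1}_{\partial}(X^{bb},{\Q})=0$ for $k+1\leq c$. Using the Leray spectral sequence for $j$, this in turn reduces to $R^qj_{\ast}{\Q}=0$ for $1\leq q\leq c-1$ on $\partial$ --- equivalently, to the vanishing in degrees $<c$ of the reduced rational cohomology of the link of each boundary point of $X^{bb}$. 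Via the Siegel domain realization of \S\ref{ssec: Siegel domain}, such a link fibers over a neighborhood in a cusp stratum $X_F$ with fiber determined by the cone $C(F)\subseteq U(F)$ and its lattice ${\UFZ}$; the required connectivity follows from the contractibility of $C(F)$ combined with $\dim U(F)\geq c_F\geq c$, where $c_F=\mathrm{codim}_{X^{bb}}X_F$.

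The main obstacle is the link connectivity step. Because $X^{bb}$ is singular along $\partial$, it is not a formal consequence of smooth-variety purity, and requires the concrete local structure of $X^{bb}$ near a cusp --- in particular the cone $C(F)$ and the action of ${\GFZ}$ --- together with a combinatorial computation of the resulting fiber cohomology. An alternative route, closer in spirit to the cited reference \cite{HZIII}, is to use the identification $W_kH^k(X)=\mathrm{Im}(H^k_{(2)}(X)\to H^k(X))$ noted just above the lemma, together with Zucker's conjecture $H^k_{(2)}(X)\simeq IH^k(X^{bb})$, to deduce the surjectivity from the low-degree agreement of intersection cohomology and ordinary cohomology on a variety whose singular locus has codimension $\geq c$.
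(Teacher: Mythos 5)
The first half of your argument is fine and in fact sharper than you state: since $X$ is smooth, Deligne's theory gives $W_kH^k(X)=\operatorname{Im}\bigl(H^k(\bar X,\Q)\to H^k(X,\Q)\bigr)$ for \emph{any} compactification $\bar X$, so purity in degree $k$ is equivalent to surjectivity of $r\colon H^k(X^{bb})\to H^k(X)$. The gap is in the step you yourself flag as the main obstacle: the vanishing $H^{k+1}_{\partial}(X^{bb},\Q)=0$ for $k+1\leq c$, i.e.\ $(R^qj_{\ast}\Q)|_{\partial}=0$ for $1\leq q\leq c-1$, is simply false in general. The stalk of $R^qj_{\ast}\Q$ at a boundary point is the degree-$q$ cohomology of the link of that point in $X^{bb}$, and this link is not a highly connected space: it is (up to fibrations over the cusp stratum) a quotient of a contractible set by the infinite arithmetic group ${\GFZ}$, a nilmanifold bundle whose low-degree cohomology is typically nonzero; the contractibility of $C(F)$ does not survive the quotient by the units acting on it. Concretely, for a Hilbert modular surface one has $c=2$, the link of a $0$-dimensional cusp is a $T^2$-bundle over $S^1$ with hyperbolic monodromy, and $H^1(\mathrm{link},\Q)=\Q\neq 0$, so $R^1j_{\ast}\Q|_{\partial}\neq 0$ and $H^2_{\partial}(X^{bb},\Q)\neq 0$ even though the lemma holds there ($H^1(X)$ is pure). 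So vanishing of the local cohomology is strictly stronger than what is true; what one would actually need is the vanishing of the connecting map $H^k(X)\to H^{k+1}_{\partial}(X^{bb})$, and proving that comes back to a weight argument rather than a connectivity argument. (Also, the inequality you invoke, $\dim U(F)\geq c_F$, goes the wrong way in general: $c_F=\dim U(F)+\tfrac12\dim_{\R}V(F)\geq\dim U(F)$, with strict inequality e.g.\ for the $1$-dimensional cusps of Siegel $3$-folds.)

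The repair is essentially your closing sentence, and it is what the paper does, but more economically: replace $H^k(X^{bb})$ by the intersection cohomology $IH^k(X^{bb})$. The support conditions for middle-perversity intersection cohomology give that $IH^k(X^{bb})\to IH^k(X)=H^k(X)$ is an isomorphism for $k<c$ (\cite{Max} Theorem 6.7.4 (a)) --- this is where the codimension hypothesis enters correctly, at the level of $IH$ rather than of ordinary local cohomology --- the map is a morphism of mixed Hodge structures by Saito's theory of mixed Hodge modules \cite{Sa}, and $IH^k(X^{bb})$ is pure of weight $k$ because $X^{bb}$ is projective. This avoids both the false local vanishing and the heavier input you propose (the identification of $W_kH^k(X)$ with the image of $L^2$-cohomology together with Zucker's conjecture), which is not needed for the statement.
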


\begin{proof}
This is essentially noticed in \cite{HZIII} Remark 5.5. 
For the convenience of the reader, we give a direct argument here. 
Let $IH^k(X^{bb})$ be the intersection cohomology of $X^{bb}$. 
The restriction map 
$IH^k(X^{bb})\to IH^k(X)=H^k(X)$ 
is an isomorphism if $k<c$ (\cite{Max} Theorem 6.7.4 (a)). 
By the theory of mixed Hodge modules (\cite{Sa}), 
this is a morphism of mixed Hodge structures, 
and the mixed Hodge structure on $IH^{k}(X^{bb})$ 
is pure of weight $k$ by the projectivity of $X^{bb}$. 
\end{proof}

From now on, we consider the case $k\geq c$. 
We define a sequence of natural numbers 
\begin{equation}\label{eqn: n(i)}
0 < n(1) <n(2) < \cdots < n(r) \leq n 
\end{equation}
as the set of values of $\dim U(F)$ for all cusps $F$ of ${\D}$. 
%Here we have $n(i)\leq n$ by the structure of the Siegel domain realizations. 
The equality $n(r)=n$ holds if and only if there is a cusp $F$ with $\dim {\VF} = 0$, 
in which case the Siegel domain realization at $F$ 
realizes ${\D}$ as a tube domain in $U(F)_{{\C}}$. 
We also set $n(0)=0$. 

When the algebraic group $\mathbb{G}$ is ${\Q}$-simple, 
$r$ is equal to the ${\Q}$-rank of $\mathbb{G}$. 
We have a flag $F_1\succ F_2 \succ \cdots \succ F_r$ of reference cusps of ${\D}$ of length $r$ such that 
every cusp of ${\D}$ is equivalent to one of $F_{i}$ 
under the action of $\mathbb{G}({\Q})$ (\cite{BB} Theorem 3.8). 
Then $n(i)=\dim U(F_{i})$, and we call the index $i$ the \textit{corank} of $F_{i}$. 
On the other hand, when $\mathbb{G}$ is not ${\Q}$-simple, 
cusps of ${\D}$ are product of cusps of some factors of ${\D}$ with the remaining factors. 
So we have a tree of reference cusps, rather than a single flag. 
Then $r$ is no longer equal to the ${\Q}$-rank of $\mathbb{G}$ in general, 
and there can be two non-equivalent cusps $F$, $F'$ with $\dim U(F)=\dim U(F')$. 
In this case, the terminology ``corank'' would be no longer appropriate. 
Nevertheless \eqref{eqn: n(i)} is useful for studying the mixed Hodge structures of $X$. 

Recall from \eqref{eqn: (p,q) range} that the Hodge number $h^{p,q}_{k}(X)$ for $p+q\geq k$ 
can be nonzero only when $|p-q| \leq \min (2k, 2n)-p-q$. 
We shall prove the following further vanishing. 
We say that a pure Hodge structure $V$ is \textit{effective} 
if in the Hodge decomposition $V=\bigoplus_{p,q}V^{p,q}$ 
only indices $(p, q)$ with $p\geq 0$, $q\geq 0$ appear. 

\begin{theorem}\label{thm: VT I}
Let $1\leq i \leq r$. 
If $n(i-1) < m \leq n(i)$, 
then ${\Gr}^W_{k+m}H^k(X)$ is a Tate twist of an effective pure Hodge structure of weight $\leq n-n(i)$. 
Therefore, if $n(i-1)< p+q-k \leq n(i)$, we have 
$h_{k}^{p,q}(X)\ne 0$ only when 
\begin{equation}\label{eqn: Hodge weight min}
|p-q| \: \leq \: \min (n-n(i), \, 2k-p-q, \, 2n-p-q). 
\end{equation}
When $m>n(r)$, we have ${\Gr}^W_{k+m}H^k(X)=0$. 
\end{theorem}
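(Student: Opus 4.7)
The plan is to extract ${\Gr}^W_{k+m}H^k(X)$ as a subquotient of $E_1^{-m,k+m} \simeq H^{k-m}(D(m))(-m)$ via the weight spectral sequence from \S\ref{ssec: weight ss}, and then to bound the Hodge types of each irreducible component $D_I$ of $D(m)$. The two assertions (weight bound for $n(i-1)<m\leq n(i)$, vanishing for $m>n(r)$) are then immediate.

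First, consider $m>n(r)$. By the toroidal construction of \S\ref{ssec: toroidal}, every irreducible component of the boundary divisor $D\subset \Xcpt$ corresponds to a ray in some fan $\Sigma_F$ (for a cusp $F$), and a nonempty intersection $D_I$ with $|I|=m$ forces these rays to span an $m$-dimensional cone $\sigma\in \Sigma_F$. Since $\sigma\subset U(F)$, we must have $m\leq \dim_{\R}U(F)=n(F)\leq n(r)$, a contradiction. Hence $D(m)=\varnothing$, $E_1^{-m,k+m}=0$, and ${\Gr}^W_{k+m}H^k(X)=0$.

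Now assume $n(i-1)<m\leq n(i)$. The same count shows every component $D_I$ of $D(m)$ comes from a pair $(F,\sigma)$ with $\dim \sigma=m$ and $n(F)\geq m>n(i-1)$, whence $n(F)\geq n(i)$. The open stratum $\SFs$ is a principal $\mathbb{O}(\sigma)$-bundle over $\VF$, and after quotienting by $\GFZbar$ and taking closure in $\Xcpt$ we obtain a proper morphism $D_I\to X_F^{bb}$. By the Siegel domain picture of \S\ref{ssec: Siegel domain}, this morphism factors as
\begin{equation*}
D_I \longrightarrow \overline{Y}_F \longrightarrow X_F^{bb},
\end{equation*}
where $\overline{Y}_F$ is a smooth projective compactification of the abelian fibration $Y_F\to X_F$ whose fibers are the abelian variety quotients associated to $V(F)$, and the fibers of $D_I\to \overline{Y}_F$ are smooth complete toric varieties compactifying (quotients of) $\mathbb{O}(\sigma)$. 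Counting dimensions through the Siegel domain,
\begin{equation*}
\dim_{\C}\overline{Y}_F \;=\; \dim_{\C}X_F + \tfrac{1}{2}\dim_{\R}V(F) \;=\; n-n(F) \;\leq\; n-n(i).
\end{equation*}

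Finally, smooth complete toric varieties have Hodge-Tate cohomology (all classes of type $(t,t)$), so the Leray spectral sequence for $D_I\to \overline{Y}_F$ expresses $H^{k-m}(D_I)$ as a sum of terms of the form $H^a(\overline{Y}_F)(-t)$ with $a+2t=k-m$. Since $\overline{Y}_F$ is smooth projective of dimension $\leq n-n(i)$, each Hodge type $(p',q')$ of $H^a(\overline{Y}_F)$ satisfies $0\leq p',q'\leq \dim_{\C}\overline{Y}_F$, hence $|p'-q'|\leq n-n(i)$; the Tate twists coming from the toric factors preserve $|p'-q'|$. Therefore $H^{k-m}(D_I)$, and thus the subquotient ${\Gr}^W_{k+m}H^k(X)$ of $E_1^{-m,k+m}$, is a Tate twist of an effective pure Hodge structure of weight $\leq n-n(i)$. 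The Hodge number bound \eqref{eqn: Hodge weight min} then follows by intersecting this with the prior range \eqref{eqn: (p,q) range}.

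The main technical obstacle is setting up the factorization $D_I\to \overline{Y}_F$ with toric fibers uniformly across components and carefully controlling the $\GFZbar$-quotient on the toroidal model so that Leray (or the decomposition theorem) applies cleanly to separate the Hodge-Tate toric contribution from the base. This is precisely the geometric input that will be revisited, in a more refined form compatible with the Siegel operator, in \S\ref{ssec: weight=corank}.
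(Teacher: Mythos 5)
Your overall strategy is right: pass through the weight spectral sequence, realize $\Gr^W_{k+m}H^k(X)$ as a subquotient of $H^{k-m}(D(m))(-m)$, and bound the Hodge types of each $D_I$. The vanishing for $m>n(r)$ and the dimension count $\dim Y_F = n - n(F)\leq n-n(i)$ both match the paper. However, the central geometric claim --- that the proper morphism $D_I\to X_F^{bb}$ factors through a map $D_I\to \overline{Y}_F$ whose fibers are smooth complete toric varieties, to which you can apply Leray --- is a genuine gap, and in fact is not how $D_I=\overline{D(F,\sigma)}$ is built. By the stratification \eqref{eqn: closure stratum}, the closure $\overline{D(F,\sigma)}$ contains boundary strata $D(F',\sigma')$ with $F'\prec F$; these strata are torus bundles over the \emph{different} abelian fibrations $Y_{F'}$ attached to smaller cusps, not over a compactification of $Y_F$. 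So there is no globally defined toric fibration over $\overline{Y}_F$: the picture degenerates precisely along the $F'$-strata, and without a fibration you cannot invoke Leray to split off a Hodge--Tate toric factor. Your argument also tacitly bounds only $\dim Y_F$; it never addresses the terms coming from $Y_{F'}$ with $F'\prec F$.

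The paper sidesteps this by replacing the fibration picture with a two-step weight-stratification argument. It applies Lemma~\ref{lem: weight stratification} to the stratification of $\overline{D(F,\sigma)}$ into the locally closed pieces $D(F',\sigma')$, and then Lemma~\ref{lem: weight torus bundle} to each piece viewed as a principal torus bundle over $Y_{F'}$. This produces a filtration of $H^{k-m}(\overline{D(F,\sigma)})$ with graded quotients that are Tate twists of subquotients of $H^\bullet(Y_{F'})$ for \emph{all} $F'\preceq F$ in the boundary. The needed bound then comes from the chain $\dim Y_{F'} \leq \dim D(F',\sigma') \leq \dim D(F,\sigma) = n-m < n-n(i-1)$, together with the fact that $\dim Y_{F'}$ is always of the form $n-n(\alpha)$, forcing $\dim Y_{F'}\leq n-n(i)$. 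No decomposition theorem or global toric fibration is required; everything is reduced to the elementary Gysin/homotopy exact sequences behind Lemmas~\ref{lem: weight torus bundle} and~\ref{lem: weight stratification}. To repair your argument you would either have to restrict attention to the open dense stratum $D(F,\sigma)$ and then control the boundary strata separately (which is essentially the paper's route), or work with a genuinely different partial toroidal model in which a toric fibration over $\overline{Y}_F$ exists --- but that model is not $\overline{D(F,\sigma)}$.
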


In Figure \ref{figure: Hodge terrace}, 
we have drawn a shape of the range of such $(p, q)$ in the case $k=n$. 
In this case, the minimum in \eqref{eqn: Hodge weight min} is always attained by $n-n(i)$. 
When $k<n$, the range of $(p, q)$ is the intersection of 
the $(k-n)$-shift of the stairs in Figure \ref{figure: Hodge terrace} with the smaller triangle $p+q\geq k$, $p, q\leq k$. 
We draw this intersection in Figure \ref{figure: Hodge terrace k<n}. 
In Proposition \ref{thm: VT II}, we will show moreover that the roof of the smaller triangle vanishes.  
Similarly, when $k>n$, the range of $(p, q)$ is the intersection of 
the $(k-n)$-shift of the stairs in Figure \ref{figure: Hodge terrace} with the triangle $p+q\geq k$, $p, q\leq n$. 

\begin{figure}[h]
\includegraphics[height=40mm, width=80mm]{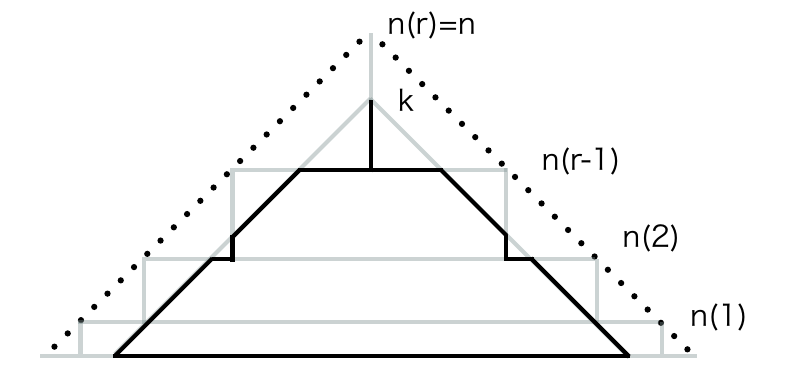}
\caption{Range of $(p, q)$ when $k<n$}\label{figure: Hodge terrace k<n}
\end{figure}

The rest of this section is devoted to the proof of Theorem \ref{thm: VT I}. 
\S \ref{ssec: stratification} and \S \ref{ssec: lemma MHS} are preliminaries, 
and the proof is completed in \S \ref{ssec: proof VT I}.

\subsection{Stratification of toroidal compactification}\label{ssec: stratification}

Let ${\Xcpt}$ be a smooth projective toroidal compactification of $X={\D}/{\G}$ 
with simple normal crossing (SNC) boundary divisor $D={\Xcpt}-X$. 
In this subsection we explain the structure of a natural stratification of $D$ following \cite{AMRT} \S III.5 -- \S III.7,  
and prove Lemma \ref{lem: D(m) toroidal} and Lemma \ref{lem: boundary DFs}. 
These two lemmas are the basis for applying mixed Hodge theory to toroidal compactifications. 
%We use the notations in \S \ref{sec: toroidal} concerning toroidal compactifications. 

Let $F$ be a cusp of ${\D}$. 
By construction we have natural maps 
\begin{equation*}
{\XFcpt} \to {\XFcpt}/{\GFZbar} \to {\Xcpt}. 
\end{equation*}
The second map is an isomorphism in a neighborhood of the $F$-locus 
$\pi^{-1}(X_{F})$ in ${\Xcpt}$ (see \cite{AMRT} p.175, p.184). 
If $\sigma$ is an $F$-cone, the boundary stratum ${\SFs}$ of ${\XFcpt}$ is mapped to a sublocus of $\pi^{-1}(X_{F})$  
(see \cite{AMRT} Proof of Lemma III.5.5 and p.171). 
On the other hand, if $\sigma\in \Sigma_{F}$ is not an $F$-cone, ${\SFs}$ is mapped to 
a sublocus of $\pi^{-1}(X_{F'})$ where $F'\succ F$ is the cusp with $\sigma\in \Sigma_{F'}^{\circ} \subset \Sigma_{F}$. 
Therefore we have an isomorphism 
\begin{equation*}
\left( \bigsqcup_{\sigma \in \Sigma_{F}^{\circ}} {\SFs} \right) / {\GFZbar} \simeq \pi^{-1}(X_{F}). 
\end{equation*}
For an $F$-cone $\sigma\in \Sigma_{F}^{\circ}$ 
we denote by ${\DFs}$ the image of ${\SFs}$ in ${\Xcpt}$.  
This is a locally closed subset of ${\Xcpt}$ of codimension $\dim \sigma$. 
We thus have the stratification 
\begin{equation}\label{eqn: stratification toroidal}
{\Xcpt} = X \sqcup \bigsqcup_{(F,\sigma)}{\DFs}, 
\end{equation}
where $(F, \sigma)$ ranges over ${\G}$-equivalence classes of pairs of 
a cusp $F$ and an $F$-cone $\sigma$. 
The closure of each stratum ${\DFs}$ is stratified as 
\begin{equation}\label{eqn: closure stratum}
{\DFsbar} = {\DFs} \sqcup \bigsqcup_{(F', \sigma')}D(F', \sigma'), 
\end{equation}
where $(F', \sigma')$ ranges over pairs such that 
$F' \preceq F$ and $\sigma'\in \Sigma_{F'}^{\circ}$ with $\sigma\prec \sigma'$ 
via the inclusion $U(F)\subseteq U(F')$. 
%This shows in particular that the intersection of two such closures 
%$\overline{D(F_{+}, \sigma_{+})}$, $\overline{D(F_{-}, \sigma_{-})}$ 
%is a union of some strata $D(F_{\alpha}, \sigma_{\alpha})$. 

Each stratum ${\DFs}$ has the following structure. 
Since ${\GF}$ is neat, the stabilizer of the $F$-cone $\sigma$ in ${\GF}$ is trivial. 
Therefore we have 
\begin{equation*}
{\DFs} \simeq {\SFs}/{\GFZbd} 
\end{equation*}
(cf.~\cite{AMRT} p.184). 
Since ${\SFs}$ is a principal $\mathbb{O}(\sigma)$-bundle over ${\VF}$ and ${\GFZbd}$ acts on ${\VF}$ freely, 
${\DFs}$ is a principal $\mathbb{O}(\sigma)$-bundle over $Y_{F}$  
(in the \'etale topology $\Leftrightarrow$ in the Zariski topology).  
Note that the projection ${\DFs}\to Y_F$ depends on the cone $\sigma$, rather than its ${\GF}$-equivalence class. 
This issue does not matter for the moment, but will be taken up in \S \ref{ssec: GF monodromy}.

%From now on, we assume that $\Sigma$ is chosen so that 
%the boundary divisor $D={\Xcpt}-X$ is simple normal crossing (cf.~Appendix \ref{sec: SNC}). 
We connect this toroidal compactification with the setting of mixed Hodge theory in \S \ref{sec: MHS}. 
By \eqref{eqn: stratification toroidal}, 
irreducible components of $D={\Xcpt}-X$ are of the form $\overline{D(F, \tau)}$ with $\dim \tau=1$. 
We call a $1$-dimensional cone a \textit{ray}. 
Thus irreducible components of $D$ correspond to 
${\G}$-equivalence classes of rays in $\Sigma$, 
where if $F\prec F'$ we identify $\Sigma_{F'} \subset \Sigma_{F}$ naturally.

\begin{lemma}\label{lem: D(m) toroidal}
Let $F$ be a cusp of ${\D}$ and $\sigma\in \Sigma_{F}^{\circ}$ be an $F$-cone of dimension $m$. 
Let $\tau_1, \cdots, \tau_{m}\in \Sigma_{F}$ be the rays of $\sigma$. 
Then $\overline{{\DFs}}$ is smooth and is a connected component of   
$\overline{D(F_1, \tau_1)} \cap \cdots \cap \overline{D(F_m, \tau_m)}$ 
where $F_i\succeq F$ is the cusp with $\tau_{i}\in \Sigma_{F_{i}}^{\circ}$. 
The rays $\tau_1, \cdots, \tau_{m}$ are not ${\G}$-equivalent to each other. 

Conversely, every connected component of a non-empty intersection of 
some irreducible components of $D$ is of the form ${\DFsbar}$ for 
some cusp $F$ and an $F$-cone $\sigma\in \Sigma_{F}^{\circ}$. 
\end{lemma}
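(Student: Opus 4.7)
The plan is to reduce everything to the toric-geometric local model provided by the étale description recalled in \S\ref{ssec: toroidal}: the natural map $\XFcpt \to \XFcpt/\GFZbar \simeq \Xcpt$ is an isomorphism in a neighborhood of $\pi^{-1}(X_F)$, so the local structure of $\Xcpt$ near a point of $\DFs$ is that of the toric fibration $\mathcal{T}(F)^{\Sigma_F}$ near a corresponding point of $\SFs$. First I would record that $\DFs$ is smooth and connected: as a principal $\mathbb{O}(\sigma)$-bundle over $Y_F$, which fibers over the connected base $X_F=F/\GFZ$ with connected abelian fibers, it is a smooth connected variety, so $\DFsbar$ is irreducible of codimension $\dim\sigma=m$. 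Each ray $\tau_i$ of $\sigma$ belongs to $\Sigma_{F_i}^\circ$ for a unique cusp $F_i\succeq F$, and $\tau_i\prec\sigma$ combined with the closure formula \eqref{eqn: closure stratum} yields the inclusion $\DFsbar\subseteq\bigcap_{i=1}^m\overline{D(F_i,\tau_i)}$.

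The crux is the $\G$-inequivalence of the pairs $(F_i,\tau_i)$. Fix $x\in\DFs$ and a preimage $\tilde{x}\in\SFs$. Since $\Sigma$ is smooth, $\sigma$ is generated by part of a ${\Z}$-basis of $\UFZ$, so its rays $\tau_1,\ldots,\tau_m$ are pairwise distinct and in $\mathcal{T}(F)^{\Sigma_F}$ give $m$ pairwise distinct local analytic branches of the boundary through $\tilde x$, namely the restrictions of $\overline{S(F,\tau_1)},\ldots,\overline{S(F,\tau_m)}$. Pushing these through the étale isomorphism produces $m$ distinct local branches of $D$ through $x$, each contained in some $\overline{D(F_i,\tau_i)}$. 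The SNC hypothesis forces each irreducible component of $D$ to be smooth, hence to have a unique local branch at every point it contains; therefore the $m$ local branches at $x$ must come from $m$ pairwise distinct global irreducible components, which is precisely $\G$-inequivalence of the $(F_i,\tau_i)$.

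With distinctness established, the SNC assumption makes $\bigcap_{i=1}^m\overline{D(F_i,\tau_i)}$ smooth of pure codimension $m$, and since $\DFsbar$ is irreducible of the same codimension inside it, $\DFsbar$ is a connected component and in particular smooth. For the converse, let $Z$ be a connected component of a nonempty intersection $\bigcap_{i=1}^{m'}\overline{D(F_i',\tau_i')}$ of distinct irreducible components; by SNC it has pure codimension $m'$. Taking a generic $x\in Z$ lying in an open stratum $\DFs$ of $\Xcpt$, the condition $x\in\overline{D(F_i',\tau_i')}$ forces via \eqref{eqn: closure stratum} each $\tau_i'$ to be a ray of $\sigma$, whence $\dim\sigma\geq m'$; matching codimensions then gives $\dim\sigma=m'$ with rays exactly $\tau_1',\ldots,\tau_{m'}'$, and $\DFsbar\subseteq Z$ of the same pure codimension forces $\DFsbar=Z$.

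The main obstacle I foresee is the distinctness step of the second paragraph: one must confirm carefully that $m$ genuinely distinct local branches of the boundary of $\XFcpt$ survive as $m$ distinct local branches of $D$ under the quotient by $\GFZbar$ and the gluing together of the various partial toroidal compactifications, for which the smoothness of the irreducible components of $D$ granted by SNC is the essential input; without it, two rays of $\sigma$ could a priori lie in a single $\G$-orbit producing a self-intersection of a single component of $D$.
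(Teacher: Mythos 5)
Your proposal is correct and follows essentially the same route as the paper's proof: distinctness of the components $\overline{D(F_i,\tau_i)}$ via the local isomorphism near ${\SFs}$ together with the no-self-intersection consequence of the SNC assumption, then the codimension/irreducibility count to identify ${\DFsbar}$ as a connected component, and the converse via the stratification of the intersection with a codimension match. The only cosmetic point is that it is the map ${\XFcpt}/{\GFZbar}\to{\Xcpt}$ (not ${\XFcpt}\to{\XFcpt}/{\GFZbar}$) that is an isomorphism near the $F$-locus, but you use the correct local-isomorphism statement near ${\SFs}$ where it matters.
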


\begin{proof}
We begin with the first part. 
In ${\XFcpt}$, the divisor $\overline{S\!(F, \tau_1)} + \cdots + \overline{S\!(F, \tau_m)}$ 
is simple normal crossing and we have 
\begin{equation}\label{eqn: SFs intersection}
\overline{S\!(F, \sigma)} = \overline{S\!(F, \tau_1)} \cap \cdots \cap \overline{S\!(F, \tau_m)}. 
\end{equation}
Sending this equality by ${\XFcpt}\to {\Xcpt}$ and taking the closures in ${\Xcpt}$, 
we see that 
 \begin{equation}\label{eqn: DFs intersection}
{\DFsbar} \subset \overline{D(F_1, \tau_1)} \cap \cdots \cap \overline{D(F_m, \tau_m)} 
\end{equation}
in ${\Xcpt}$. 
In what follows, we write $D_{i}=\overline{D(F_i, \tau_i)}$. 
We first check that the divisors $D_{1}, \cdots, D_{m}$ are distinct. 
Since the projection ${\XFcpt}\to {\Xcpt}$ is a local isomorphism in a neighborhood of every point of ${\SFs}$, 
the intersection of $D_{1}+ \cdots +D_{m}$ with a neighborhood of a point of ${\DFs}$ has $m$ distinct branches. 
Since each $D_{i}$ does not have self-intersection by our SNC condition, 
this shows that $D_{i}\ne D_{j}$ if $i\ne j$. 
Thus $\tau_1, \cdots, \tau_{m}$ are not ${\G}$-equivalent to each other. 

Now, in \eqref{eqn: DFs intersection}, ${\DFsbar}$ is irreducible of codimension $m$, 
while $D_{1} \cap \cdots \cap D_{m}$ is smooth and equidimensional of codimension $m$ by our SNC condition. 
This implies that ${\DFsbar}$ is a connected component of $D_{1} \cap \cdots \cap D_{m}$, 
and in particular smooth. 

Next we prove the second part. 
We take $m$ divisors $D_{i}=\overline{D(F_i, \tau_i)}$, $1\leq i \leq m$, 
such that $D_{1} \cap \cdots \cap D_{m}$ is non-empty. 
Since each $D_i$ is stratified like \eqref{eqn: closure stratum}, 
their intersection is also a union of some strata: 
\begin{equation*}
D_{1} \cap \cdots \cap D_{m} = \bigsqcup_{\alpha} D(F_{\alpha}, \sigma_{\alpha}). 
\end{equation*}
Among the strata in the right hand side, we pick up those of codimension $m$. 
Then $D_{1} \cap \cdots \cap D_{m}$ coincides with 
the disjoint union of the closures of these strata, 
because every connected component of $D_{1} \cap \cdots \cap D_{m}$ is irreducible of codimension $m$ 
by our SNC condition. 
\end{proof}

\begin{remark}\label{rmk: non-connected}
In general, $\overline{D(F_1, \tau_1)} \cap \cdots \cap \overline{D(F_m, \tau_m)}$ 
may have several connected components. 
For example, consider a cusp of a Hilbert modular surface 
whose minimal resolution is a union of two ${\proj}^1$'s meeting at two points. 
%The process of forming the quotient from the infinite chain of ${\proj}^1$'s (cf.~\cite{AMRT} \S I.5) 
%in this case might explain one reason why 
Equality can be lost when passing from \eqref{eqn: SFs intersection} to \eqref{eqn: DFs intersection}. 
\end{remark} 

By Lemma \ref{lem: D(m) toroidal}, 
the space $D(m)$ considered in \eqref{eqn: D(m)} 
is described in the present setting as 
\begin{equation}\label{eqn: D(m) toroidal}
D(m) = \bigsqcup_{\substack{(F,\sigma) \\ \dim \sigma=m}} {\DFsbar}, 
\end{equation}
where $(F, \sigma)$ ranges over ${\G}$-equivalence classes of pairs of 
a cusp $F$ and an $F$-cone $\sigma$ with $\dim \sigma=m$. 
Indices $I$ in \eqref{eqn: D(m)} correspond to 
tuples $\{ [\tau_1], \cdots, [\tau_{m}] \}$ 
of $m$ distinct ${\G}$-equivalence classes of rays in $\Sigma$. 
By Lemma \ref{lem: D(m) toroidal}, we have $D_{I}\ne \emptyset$ if and only if 
there exists an $m$-dimensional cone $\sigma\in \Sigma$ 
whose rays belong to $[\tau_1], \cdots, [\tau_{m}]$. 
%Although $D_{I}$ may have several connected components, 
Via the assignment 
\begin{equation*}\label{eqn: simplicial map}
\sigma\mapsto I =  \{ [\tau_1], \cdots, [\tau_{m}] \}, 
\end{equation*}
%is well-defined. 
%Via this assignment for $\sigma$ and its subcones, 
the simplex structure of $\sigma$ agrees with that of $I$. 
Lemma \ref{lem: D(m) toroidal} amounts to saying that 
the $\Delta$-complex (in the sense of \cite{Hatcher} \S 2.1) 
associated to the projectivization of $\Sigma/{\G}$ is exactly the dual complex of the SNC divisor $D$. 
%The correspondence \eqref{eqn: simplicial map} is the natural map from this dual complex to 
%the simplicial complex associated to $D$ which is bijective between the vertices. 

The following property will be used in \S \ref{ssec: weight=corank} (but not in this \S \ref{sec: VT I}). 

\begin{lemma}\label{lem: boundary DFs}
Let ${\DFs}$ be as in Lemma \ref{lem: D(m) toroidal} and let ${\DFs}'={\DFsbar}-{\DFs}$. 
Then ${\DFs}'$ coincides with the intersection of ${\DFsbar}$ with 
the union of the irreducible components of $D$ that do not contain ${\DFsbar}$. 
\end{lemma}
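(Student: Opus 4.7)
The plan is to translate the statement into a combinatorial one about the cones. Set $D_i=\overline{D(F_i,\tau_i)}$ for the $m$ irreducible components singled out in Lemma~\ref{lem: D(m) toroidal}. I would first verify that $\{D_1,\ldots,D_m\}$ is exactly the set of irreducible components of $D$ that contain $\DFsbar$. The inclusion $\DFsbar\subseteq D_i$ is part of Lemma~\ref{lem: D(m) toroidal}. Conversely, if any irreducible component $D_j=\overline{D(F_j,\tau_j)}$ contains $\DFsbar$, then $\DFs$ must appear as a stratum in the decomposition \eqref{eqn: closure stratum} of $D_j$, which forces $F\preceq F_j$ and $\tau_j\prec\sigma$; hence $\tau_j$ is ${\G}$-equivalent to some $\tau_i$ and $D_j=D_i$.

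Granted this characterization, the inclusion $\DFs'\subseteq\DFsbar\cap\bigcup_{D_j\not\supseteq\DFsbar}D_j$ goes as follows. A point $x\in\DFs'$ lies in some stratum $D(F',\sigma')$ with $F'\preceq F$ and $\sigma$ a proper face of $\sigma'$, so $\sigma'$ has at least one ray $\tau$ that is not a ray of $\sigma$. Applying Lemma~\ref{lem: D(m) toroidal} to the $F'$-cone $\sigma'$, the rays of $\sigma'$ are pairwise non-${\G}$-equivalent, so $\tau$ is not ${\G}$-equivalent to any $\tau_i$ and its associated irreducible component $D_\tau$ of $D$ lies outside $\{D_1,\ldots,D_m\}$. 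On the other hand $x\in D(F',\sigma')\subseteq D_\tau$, again by \eqref{eqn: closure stratum} applied to $\overline{D_\tau}$.

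The reverse inclusion is where the SNC hypothesis enters essentially. Suppose $x\in\DFsbar\cap D_j$ with $D_j\notin\{D_1,\ldots,D_m\}$; I must show $x\notin\DFs$. Otherwise the codimension-$m$ stratum $\DFs$ would pass through $x$, and the SNC condition forces the number of irreducible components of $D$ through $x$ to be exactly $m$; since $D_1,\ldots,D_m$ all contain $\DFsbar\ni x$, they already account for all of these local components, leaving no room for $D_j$. I expect the main obstacle to be the combinatorial bookkeeping in the first paragraph — tracking ${\G}$-equivalences between the cusps $F$, $F'$, $F_j$ and their respective cone decompositions; once this is set up, both inclusions become formal consequences of \eqref{eqn: closure stratum}, with the SNC condition supplying the reverse one.
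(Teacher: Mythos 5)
Your proof is correct and takes essentially the same route as the paper: both directions rest on Lemma~\ref{lem: D(m) toroidal}, the stratification \eqref{eqn: closure stratum}, and the SNC hypothesis. The only cosmetic divergences are that for a stratum $D(F',\sigma')\subset\DFs'$ the paper routes through the intermediate cone $\sigma''$ spanned by $\sigma$ and a new ray $\tau$ (arguing that $\overline{D(F'',\sigma'')}$ is a connected component of $\DFsbar\cap\overline{D(F_\tau,\tau)}$), whereas you invoke the pairwise non-${\G}$-equivalence of the rays of $\sigma'$ directly; and for the converse the paper observes that a component $D_\alpha$ not containing $\DFsbar$ is a union of strata hence disjoint from $\DFs$, whereas you count local branches of $D$ at a point of $\DFs$ --- both valid uses of the SNC assumption.
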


\begin{proof}
Let $\tau_1, \cdots, \tau_m$ be the rays of $\sigma$ as in Lemma \ref{lem: D(m) toroidal}. 
By Lemma \ref{lem: D(m) toroidal} and the normal crossing property, 
any irreducible component $D_{\alpha}$ of $D$ different from 
$\overline{D(F_1, \tau_1)}, \cdots, \overline{D(F_m, \tau_m)}$ does not contain ${\DFsbar}$. 
Since $D_{\alpha}$ is a union of some strata, it must be disjoint from ${\DFs}$, 
so its intersection with ${\DFsbar}$ is contained in ${\DFs}'$. 

Conversely, let $D(F', \sigma')$ be a stratum of ${\DFs}'$ as in \eqref{eqn: closure stratum}. 
We take a ray $\tau$ of $\sigma'$ not contained in $\sigma$ and 
let $\sigma''$ be the cone spanned by $\sigma$ and $\tau$. 
Thus $\sigma\prec \sigma'' \preceq \sigma'$.  
Then we have 
\begin{equation*}
\overline{D(F', \sigma')} \, \subset \, \overline{D(F'', \sigma'')} \, \subset \, {\DFsbar}, 
\end{equation*}
where $F''$ is the cusp with $\sigma''\in \Sigma_{F''}^{\circ}$. 
By Lemma \ref{lem: D(m) toroidal} for $\sigma''$, 
we see that $\overline{D(F'', \sigma'')}$ is a connected component of 
${\DFsbar} \cap \overline{D(F_{\tau}, \tau)}$,  
%\subset \overline{D(F_1, \tau_1)} \cap \cdots \cap \overline{D(F_m, \tau_m)} \cap \overline{D(F_{\tau}, \tau)}, 
where $F_{\tau}$ is the cusp with $\tau\in \Sigma_{F_{\tau}}^{\circ}$. 
%Since $\overline{D(F'', \sigma'')}$ has smaller dimension than ${\DFsbar}$, 
%we see that $D(F_{\tau}, \tau)$ is different from $D(F_1, \tau_1), \cdots, D(F_m, \tau_m)$. 
Thus $D(F', \sigma')$ is contained in the proper intersection 
${\DFsbar} \cap \overline{D(F_{\tau}, \tau)}$. 
This proves our assertion. 
\end{proof}

\subsection{Weights of the strata}\label{ssec: lemma MHS}

In this subsection we prepare two lemmas 
for bounding the Hodge levels of the cohomology of the strata ${\DFsbar}$. 
Both are standard and we state them for the convenience of reference. 
In what follows, we consider the abelian category of mixed Hodge structures. 
%Let $K_{0}(\frak{hs})$ be the Grothendieck group of pure Hodge structures. 
%If $(V, W_{\bullet}, F^{\bullet})$ is a mixed Hodge structure, 
%we denote by $[V]=\sum_{m}[{\Gr}^{W}_{m}V]$ its class in $K_{0}(\frak{hs})$ (cf.~\cite{PS} \S 3.1). 

\begin{lemma}\label{lem: weight torus bundle}
Let $V\to Y$ be a principal $({\C}^{\ast})^d$-bundle with $V, Y$ smooth algebraic varieties. 
Then $H^k(V)$ has a filtration whose graded quotients are 
Tate twists of subquotients of  $H^j(Y)$, $k-d\leq j\leq k$. 
\end{lemma}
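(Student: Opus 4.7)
The plan is to reduce to the case $d=1$ by iterated fibration, then apply the Gysin sequence for a principal ${\C}^*$-bundle. A principal $({\C}^*)^d$-bundle $V \to Y$ corresponds to an ordered tuple of line bundles $L_1, \dots, L_d$ on $Y$. Setting $V_0 = Y$ and $V_i = V_{i-1} \times_Y L_i^{\times}$, where $L_i^{\times}$ denotes the complement of the zero section, one obtains a tower
$$V = V_d \to V_{d-1} \to \cdots \to V_0 = Y$$
in which each arrow $V_i \to V_{i-1}$ is a principal ${\C}^*$-bundle.

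The core case is $d=1$. Let $\bar V$ be the total space of the associated algebraic line bundle, so that $V = \bar V \setminus Y$ with $Y$ embedded as the zero section. The Gysin long exact sequence of the pair $(\bar V, Y)$ is an exact sequence of mixed Hodge structures; since $\bar V$ retracts onto $Y$, it reads
$$\cdots \to H^{k-2}(Y)(-1) \to H^k(Y) \to H^k(V) \to H^{k-1}(Y)(-1) \to H^{k+1}(Y) \to \cdots.$$
This presents $H^k(V)$ as an extension of MHS $0 \to A \to H^k(V) \to B \to 0$, where $A$ is a quotient of $H^k(Y)$ and $B$ is a subobject of $H^{k-1}(Y)(-1)$. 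Both graded quotients are Tate twists of subquotients of $H^j(Y)$ for $k-1 \leq j \leq k$, establishing the $d=1$ case.

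The general case follows by induction on $d$: assuming the conclusion for $V_{i-1}$ (with $j$ in the range $k-(i-1)\leq j \leq k$), apply the $d=1$ case to $V_i \to V_{i-1}$ to obtain a two-step filtration on $H^k(V_i)$ whose graded pieces are subquotients of $H^k(V_{i-1})$ and of $H^{k-1}(V_{i-1})(-1)$. Refining these via the inductive filtrations on $H^k(V_{i-1})$ and $H^{k-1}(V_{i-1})$ yields a filtration on $H^k(V_i)$ whose graded quotients are Tate twists of subquotients of $H^j(Y)$ for $k-i \leq j \leq k$. Taking $i=d$ concludes the argument. The only Hodge-theoretic input is the compatibility of the Gysin sequence with mixed Hodge structures, which is standard (see \cite{PS}); the induction is then routine bookkeeping with composed filtrations and extensions in the abelian category of mixed Hodge structures, so no serious obstacle is expected.
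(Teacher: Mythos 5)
Your proof is correct and takes essentially the same route as the paper: reduce to $d=1$ by factoring the $(\C^*)^d$-bundle as a tower of $\C^*$-bundles, then fill in the $\C^*$-bundle $V\to Y$ to an affine-line bundle $\bar V$ and use the Gysin/localization exact sequence of the closed embedding $Y\hookrightarrow \bar V$ (together with Thom isomorphism and homotopy invariance) to present $H^k(V)$ as an extension in the category of mixed Hodge structures. The paper phrases the tower as successive quotients $V \to V/\C^* \to \cdots \to Y$ rather than via the associated line bundles $L_i^\times$ and leaves the bookkeeping of composing filtrations implicit, but the argument is the same.
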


\begin{proof}
By taking the quotient by ${\C}^{\ast}$ repeatedly, 
the assertion reduces to the case $d=1$, namely $Y=V/{\C}^{\ast}$. 
Let $\bar{V}$ be the fiber bundle $V\times_{{\C}^{\ast}} \mathbb{A}^1$. 
Then $\bar{V}\to Y$ is an $\mathbb{A}^1$-bundle with zero section $\bar{V}-V \simeq Y$. 
By the cohomology exact sequence for the pair $(\bar{V}, V)$ (see \cite{PS} Corollary 5.50), 
we have an exact sequence  
\begin{equation*}
H^k(\bar{V}) \to H^k(V) \to H^{k+1}_{Y}(\bar{V}) 
\end{equation*}
of mixed Hodge structures. 
We have the Thom isomorphism $H^{k+1}_{Y}(\bar{V}) \simeq H^{k-1}(Y)(-1)$ 
and the homotopy invariance $H^k(\bar{V})\simeq H^k(Y)$. 
This defines a filtration on $H^k(V)$ as desired. 
\end{proof}

In fact, the cohomology exact sequence for $(\bar{V}, V)$ splits, but we will not use this information. 

\begin{lemma}\label{lem: weight stratification}
Let $D$ be a smooth algebraic variety and 
$\emptyset=U_0 \subset U_1 \subset \cdots \subset U_N=D$ 
be a sequence of Zariski open subsets. 
Suppose that $V_i=U_i-U_{i-1}$ is smooth and equidimensional. 
Let $c_i$ be the codimension of $V_{i}$. 
Then $H^k(D)$ has a filtration whose graded quotients are Tate twists of subquotients of 
$H^{k-2c_{i}}(V_i)$, $1\leq i \leq N$. 
\end{lemma}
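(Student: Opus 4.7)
The plan is to proceed by induction on $N$. The base case $N=1$ is immediate: one has $V_1 = D$ and $c_1 = 0$, so the tautological filtration $0 \subset H^k(D)$ satisfies the claim.

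For the inductive step, set $Z = V_N$, which by hypothesis is smooth and of pure codimension $c_N$ in $D = U_N$, with open complement $U_{N-1}$. I would invoke the long exact sequence of cohomology with supports
\begin{equation*}
\cdots \to H^k_{Z}(D) \to H^k(D) \to H^k(U_{N-1}) \to H^{k+1}_{Z}(D) \to \cdots,
\end{equation*}
which is an exact sequence of mixed Hodge structures by \cite{PS} Corollary 5.50, together with the Thom--Gysin isomorphism
\begin{equation*}
H^k_{Z}(D) \simeq H^{k-2c_N}(V_N)(-c_N)
\end{equation*}
in the category of mixed Hodge structures. Letting $F_1 \subset H^k(D)$ denote the image of the Gysin map $H^k_Z(D) \to H^k(D)$, I obtain a two-step filtration $0 \subset F_1 \subset H^k(D)$ in which $F_1$ is a quotient of $H^{k-2c_N}(V_N)(-c_N)$, hence a Tate twist of a subquotient of $H^{k-2c_N}(V_N)$, while the quotient $H^k(D)/F_1$ injects into $H^k(U_{N-1})$.

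Next I would apply the induction hypothesis to $U_{N-1}$ equipped with the truncated filtration $U_0 \subset \cdots \subset U_{N-1}$ and strata $V_1,\ldots,V_{N-1}$. Note that $U_{N-1}$ is open in the smooth variety $D$, and the codimension of each $V_i$ ($i<N$) in $U_{N-1}$ coincides with $c_i$ since $U_{N-1}$ is open in $D$. This yields a filtration on $H^k(U_{N-1})$ whose graded quotients are Tate twists of subquotients of $H^{k-2c_i}(V_i)$ for $1 \leq i \leq N-1$. Pulling this filtration back along the injection $H^k(D)/F_1 \hookrightarrow H^k(U_{N-1})$ produces a filtration on $H^k(D)/F_1$ whose graded quotients are subquotients of those of $H^k(U_{N-1})$, hence are again Tate twists of subquotients of the $H^{k-2c_i}(V_i)$. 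Refining $0 \subset F_1 \subset H^k(D)$ by this pulled-back filtration delivers the required filtration on $H^k(D)$.

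I do not anticipate a serious obstacle: the argument is purely formal once one grants that the localization long exact sequence and the Thom--Gysin isomorphism are morphisms of mixed Hodge structures, which is standard (e.g.\ via Deligne's construction or the theory of mixed Hodge modules). The only point requiring mild attention is that codimensions are preserved when passing to the open subvariety $U_{N-1}$, which is exactly what makes the induction hypothesis directly applicable.
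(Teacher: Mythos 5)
Your proof is correct and is essentially the same as the paper's: both use the long exact sequence of the pair $(U_i,U_{i-1})$ (equivalently, cohomology with supports) together with the Thom isomorphism, both as morphisms of mixed Hodge structures, and then induct on the number of strata to stitch the resulting two-step filtrations together. The paper's version is just more terse, leaving implicit the bookkeeping (image of the Gysin map, pullback of the filtration along the injection into $H^k(U_{N-1})$) that you spell out.
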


\begin{proof}
By the cohomology exact sequence for the pair $(U_i, U_{i-1})$ 
and the Thom isomorphism, we have an exact sequence 
\begin{equation*}
H^{k-2c_{i}}(V_{i})(-c_i) \to H^k(U_i) \to H^k(U_{i-1}) 
\end{equation*}
of mixed Hodge structures. 
By considering inductively on $i$, this implies the assertion for $U_N=D$. 
\end{proof}

\subsection{Proof of Theorem \ref{thm: VT I}}\label{ssec: proof VT I}

Now we prove Theorem \ref{thm: VT I}. 
As in \S \ref{ssec: stratification}, 
we take a smooth projective toroidal compactification ${\Xcpt}$ of $X={\D}/{\G}$ 
with SNC boundary divisor. 
We compute the weight spectral sequence with this compactification. 
%We use the notations in both \S \ref{ssec: weight ss} and \S \ref{ssec: stratification}. 
Let $0<m\leq n$. 
If $F$ is a cusp of ${\D}$ and $\sigma$ is an $F$-cone with $\dim \sigma = m$, we have  
%\begin{equation*}
$m \leq \dim U(F) \leq n(r)$. 
%\end{equation*}
By \eqref{eqn: D(m) toroidal}, we see that $D(m)$ is empty if $m>n(r)$. 
By the weight spectral sequence, we have ${\Gr}^{W}_{k+m}H^k(X)=0$ in this case. 
In what follows, we consider the case $m\leq n(r)$. 
Let $1\leq i \leq r$ be the index with $n(i-1)<m \leq n(i)$. 

Let $(E_{1}^{p,q})$ be the $E_1$ page of the weight spectral sequence. 
By \eqref{eqn: weight E1} and \eqref{eqn: D(m) toroidal}, we have 
\begin{equation}\label{eqn: E_1 toroidal}
E_1^{-m,k+m} = H^{k-m}(D(m))(-m) 
= \bigoplus_{\substack{(F,\sigma) \\ \dim \sigma=m}} H^{k-m}({\DFsbar})(-m). 
\end{equation}
We apply Lemma \ref{lem: weight stratification} to the stratification \eqref{eqn: closure stratum} of ${\DFsbar}$, 
where we take the Zariski open set $U_{i}\subset {\DFsbar}$ to be 
the complement of the union of strata of codimension $\geq i$. 
This shows that 
$H^{k-m}({\DFsbar})$ has a filtration whose graded quotients are 
Tate twists of subquotients of $H^{k-m-\ast}(D(F', \sigma'))$ for $F'\preceq F$, $\sigma'\succeq \sigma$. 
Since each stratum $D(F', \sigma')$ is a principal torus bundle over $Y_{F'}$, 
we can apply Lemma \ref{lem: weight torus bundle}. 
This tells us that 
$H^{k-m-\ast}(D(F', \sigma'))$ has a filtration whose graded quotients are 
Tate twists of subquotients of $H^{\bullet}(Y_{F'})$ with $\bullet\leq k-m-\ast$. 
Combining these two constructions, we see that 
$H^{k-m}({\DFsbar})$ has a filtration whose graded quotients are 
Tate twists of subquotients of $H^{\bullet}(Y_{F'})$ 
for various cusps $F'\preceq F$ and degrees $\bullet$. % $\leq k-m$. 
%Note that $H^{k-m}({\DFsbar})$ is pure, 
%so only a weight graded quotient of $H^{\bullet}(Y_{F'})$ contributes in each step.  

We have 
\begin{equation*}
\dim Y_{F'} \leq \dim D(F', \sigma') \leq \dim {\DFs} = n-m < n-n(i-1). 
\end{equation*}
Since $\dim Y_{F'}=n-\dim U(F')$ itself is equal to $n-n(\alpha)$ for some $1\leq \alpha \leq r$ 
by the definition \eqref{eqn: n(i)} of $n(\ast)$, 
we see that $\dim Y_{F'} \leq n-n(i)$. 
Hence every weight graded quotient of $H^{\bullet}(Y_{F'})$ 
is a Tate twist of an effective pure Hodge structure of weight $\leq n-n(i)$. 

Summing up the arguments so far, 
we see that the pure Hodge structure 
$H^{k-m}({\DFsbar})$ has a filtration whose graded quotients are 
Tate twists of effective pure Hodge structures of weight $\leq n-n(i)$. 
Therefore $H^{k-m}({\DFsbar})$ itself is a Tate twist of an effective pure Hodge structure of weight $\leq n-n(i)$. 
By \eqref{eqn: E_1 toroidal}, the same holds for $E_1^{-m,k+m}$. 
Passing to a subquotient, 
we see that the same holds for $E_2^{-m,k+m}={\Gr}_{k+m}^WH^k(X)$. 
This proves Theorem \ref{thm: VT I}.

\section{Vanishing of Hodge numbers II}\label{sec: VT II}

Let $X={\D}/{\G}$ be a locally symmetric variety as in \S \ref{sec: toroidal} with ${\G}$ neat. 
%We keep the assumption that the Baily-Borel boundary has codimension $c>1$. 
In this section we prove our second vanishing theorem and derive some consequences.  
This is of supplementary nature and 
independent of \S \ref{sec: VT I}, \S \ref{sec: weight Siegel} and \S \ref{sec: GrW}. 

Let $k<n$. 
The weight filtration 
$W_{\bullet}F^kH^k(X) = W_{\bullet}H^k(X)\cap F^kH^k(X)$ 
on $F^kH^k(X)$ has level a priori from $k$ to $2k$. 
In fact, it is trivial: 

\begin{proposition}\label{thm: VT II}
Let $k<n$. 
Then $F^kH^k(X)=W_kF^kH^k(X)$. 
Therefore we have $h_{k}^{k,m}(X)=h_{k}^{m,k}(X)=0$ for $m>0$. 
\end{proposition}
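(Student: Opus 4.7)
The plan is to prove the equivalent sheaf-theoretic identity $H^0(\bar{X}, \Omega^k_{\bar{X}}(\log D)) = H^0(\bar{X}, \Omega^k_{\bar{X}})$ when $k < n$, working on a smooth projective toroidal compactification $\bar{X}$ with SNC boundary $D$. Under the standard identifications adapted from \S\ref{ssec: WFkHk} to arbitrary degree $k$, one has $F^k H^k(X) \simeq H^0(\bar{X}, \Omega^k_{\bar{X}}(\log D))$ with weight filtration $W_{k+m} F^k H^k(X) = H^0(\bar{X}, W_m \Omega^k_{\bar{X}}(\log D))$, and in particular $W_k F^k H^k(X) = H^0(\bar{X}, \Omega^k_{\bar{X}})$. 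It therefore suffices to show $\operatorname{Gr}^{W}_{k+m} F^k H^k(X) = 0$ for every $m \geq 1$.

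Next, using the weight spectral sequence of \S\ref{ssec: weight ss}, I identify
\[
\operatorname{Gr}^{W}_{k+m} F^k H^k(X) \;\simeq\; \ker\Bigl( H^0(D(m), \Omega^{k-m}_{D(m)}) \xrightarrow{\; d_1 \;} H^1(D(m-1), \Omega^{k-m+1}_{D(m-1)}) \Bigr),
\]
where $d_1$ is the signed Gysin map \eqref{eqn: Gysin sign}. Here the $F^k$-part of the incoming differential from $E_1^{-m-1, k+m} = H^{k-m-2}(D(m+1))(-m-1)$ is automatically zero: its $(k,m)$-Hodge bidegree would correspond to the impossible Hodge type $(k-m-1, -1)$ in $H^{k-m-2}(D(m+1))$. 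So everything reduces to showing that the Gysin map $d_1$ is injective on the $(k,m)$-Hodge piece, which is $H^0(D(m), \Omega^{k-m}_{D(m)}) = \bigoplus_{\dim \sigma = m} H^0(\overline{D(F,\sigma)}, \Omega^{k-m}_{\overline{D(F,\sigma)}})$.

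To prove this injectivity, I analyze each individual Gysin map $\iota_\ast$ attached to a codimension-$1$ inclusion $\iota\colon \overline{D(F,\sigma)} \hookrightarrow \overline{D(F',\sigma')}$ (for $\sigma' \prec \sigma$, $\dim \sigma = m$) via the residue short exact sequence
\[
0 \to \Omega^{k-m+1}_{\overline{D(F',\sigma')}} \to \Omega^{k-m+1}_{\overline{D(F',\sigma')}}\bigl(\log \overline{D(F,\sigma)}\bigr) \to \iota_\ast \Omega^{k-m}_{\overline{D(F,\sigma)}} \to 0,
\]
whose cohomology LES realizes $\ker \iota_\ast$ as the image of $H^0$ of the middle sheaf. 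Using Lemma \ref{lem: D(m) toroidal} and the description of $\overline{D(F',\sigma')}$ as a projective fiber bundle over $Y_{F'}$ whose fibers are smooth projective toric varieties, together with the vanishing $H^0(V, \Omega^{p}_V) = 0$ for $p > 0$ on any smooth projective toric $V$, a relative Hodge-Leray argument shows that global log forms of positive degree on $\overline{D(F',\sigma')}$ reduce to pullbacks off $Y_{F'}$; such pullbacks have zero residue along the toric subdivisor $\overline{D(F,\sigma)}$. This forces $\ker \iota_\ast = 0$ in the relevant Hodge bidegree.

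The main obstacle will be Step 3, namely the careful Leray-type tracking of holomorphic and logarithmic forms across the nested toric-over-abelian fibration structure of the boundary strata. The inequality $k < n$ is essential here: it guarantees $n - k \geq 1$, so that $k - m$ is strictly below the stratum dimension $n - m$, which is precisely the range in which the toric-fiber vanishing cuts down global holomorphic forms to $Y_{F'}$-pullbacks. At $k = n$ the argument must fail, consistent with the fact that the top-degree logarithmic forms produce the nontrivial space $M_{can}$ of modular forms that is the subject of the main theorem of the paper.
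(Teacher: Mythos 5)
Your reduction is fine up to a point: identifying $F^kH^k(X)$ with $H^0({\Xcpt},\Omega^k_{{\Xcpt}}(\log D))$, $W_kF^kH^k(X)$ with $H^0({\Xcpt},\Omega^k_{{\Xcpt}})$, and ${\Gr}^W_{k+m}F^kH^k(X)$ with the kernel of the signed Gysin differential $d_1$ on $H^0(D(m),\Omega^{k-m}_{D(m)})$ are all correct. The gap is in how you then prove that kernel is zero: you replace injectivity of the \emph{total} map $d_1$, whose value on a fixed component of $D(m-1)$ is a signed sum of contributions from all codimension-$m$ strata containing it, by injectivity of each \emph{individual} Gysin map $\iota_\ast$. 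These are not equivalent: elements of $\ker d_1$ can arise from cancellation between different components of $D(m)$ even when every individual $\iota_\ast$ is injective. This is exactly what happens in this geometry at $k=n$: in the proof of Theorem \ref{thm: GrWFn} each individual Gysin map $H^0(K_{\overline{D(F,\sigma)}})\to H^{n-n(i)+1,1}(\overline{D(F,\tau)})$ is an isomorphism, and yet the total map has a nonzero kernel, computed by the top homology of the $\Delta$-complex $K(\Sigma_F)/{\G}_F$ (Lemma \ref{lem: top homology}). So an argument along your lines must control the combinatorial cancellations (a chain-complex comparison as in \S \ref{ssec: GrW}), and that analysis is entirely absent for $k<n$.

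The individual step is also built on an inaccurate structural claim. The closure $\overline{D(F',\sigma')}$ is \emph{not} a projective fiber bundle over $Y_{F'}$ with smooth projective toric fibers: by \eqref{eqn: closure stratum} only the part lying over $X_{F'}$ is a bundle over $Y_{F'}$, its toric fibers are in general non-complete (the fan $\Sigma_{F'}$ only covers $C(F')^{\ast}$, not all of $U(F')$), and the remaining strata of the closure lie over smaller cusps $F''\prec F'$. Consequently the proposed relative Hodge--Leray argument, and the assertion that global log forms of positive degree on $\overline{D(F',\sigma')}$ are pullbacks from $Y_{F'}$ and hence have zero residue along $\overline{D(F,\sigma)}$, are not justified as stated. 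For comparison, the paper's proof is much shorter and avoids the spectral sequence altogether: by the Koecher principle \cite{Ba} and Pommerening's extension theorem \cite{Po}, every holomorphic $k$-form on $X$ extends holomorphically over ${\Xcpt}$, so the inclusions $H^0({\Xcpt},\Omega^k)\subset H^0({\Xcpt},\Omega^k(\log D))\subset H^0(X,\Omega^k)$ are equalities, which immediately gives $F^kH^k(X)=W_kF^kH^k(X)$. Your opening reduction shows you were one known extension theorem away from this argument; as written, however, the key injectivity step of your proposal does not go through.
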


\begin{proof}
We take a smooth projective toroidal compactification ${\Xcpt}$ of $X$ with SNC boundary divisor $D$. 
By the assumption that the Baily-Borel boundary has codimension $c>1$, the Koecher principle holds (\cite{Ba}). 
Then the Pommerening extension theorem (\cite{Po}) holds which says that 
every holomorphic $k$-form on $X$ extends holomorphically over ${\Xcpt}$. 
This means that the inclusions 
\begin{equation*}
H^0({\Xcpt}, \Omega^k) \subset H^0({\Xcpt}, \Omega^k(\log D)) \subset H^0(X, \Omega^k) 
\end{equation*}
are all equalities. 
Since $F^kH^k(X)$ is the image of 
$H^0({\Xcpt}, \Omega^k(\log D))\to H^k(X)$ 
and $W_kF^kH^k(X)$ is the image of 
$H^0({\Xcpt}, \Omega^k)\to H^k(X)$, 
we obtain $F^kH^k(X)=W_kF^kH^k(X)$.  
\end{proof}

This in particular implies the following. 

\begin{corollary}\label{cor: VT II}
If $2\leq k <n$, then ${\Gr}_{2k}^{W}H^k(X)= {\Gr}_{2k-1}^{W}H^k(X)=0$. 
In particular, $H^2(X)$ is pure. 
\end{corollary}

When $c>2$, the purity of $H^2(X)$ also follows from Lemma \ref{lem: pure BB}. 
%Here we obtain an extension to the case $c=2$. 
In the case of Siegel modular $3$-folds, the purity of $H^2(X)$ was first proved by Oda-Schwermer \cite{OS1}. 
Our proof of Proposition \ref{thm: VT II} is a direct generalization of their argument. 
Similarly, $H^1(X)$ is pure either by Lemma \ref{lem: pure BB} or Proposition \ref{thm: VT II}, 
but in fact $H^1(X)$ vanishes in many cases (\cite{Mar}). 

The purity of $H^1(X)$ and $H^2(X)$ in turn has the following consequence. 

\begin{proposition}
Let ${\Xcpt}$ be a smooth projective toroidal compactification of $X$ with SNC boundary divisor $D$. 
Let $D=D_{1}+\cdots +D_{\alpha}$ be the irreducible decomposition of $D$. 

(1) $D_{1}, \cdots, D_{\alpha}$ are linearly independent in $H^2({\Xcpt})$, and we have  
\begin{equation*}
H^2({\Xcpt}) / {\C}D_{1} \oplus \cdots \oplus {\C}D_{\alpha} \: \simeq \: H^2(X). 
\end{equation*}

(2) The Gysin map 
$\bigoplus_{i=1}^{\alpha} H^1(D_{i}) \to H^3({\Xcpt})$ 
is injective. 
\end{proposition}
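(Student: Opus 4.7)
The plan is to read both statements off the weight spectral sequence of \S \ref{ssec: weight ss}, combined with the purity of $H^1(X)$ and $H^2(X)$ established in Corollary \ref{cor: VT II}. Concretely, Deligne's $E_2$-degeneration will translate the desired linear independence and cokernel identifications into the vanishing of certain graded pieces ${\Gr}^{W}_{\bullet}H^{k}(X)$.

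For part (1), I first identify the relevant corner of the $E_1$ page. By \eqref{eqn: weight E1} and \eqref{eqn: Gysin sign}, the differential
\begin{equation*}
d_1^{-1,2} \colon E_1^{-1,2} = \bigoplus_{i=1}^{\alpha} H^0(D_i)(-1) \longrightarrow E_1^{0,2} = H^2({\Xcpt})
\end{equation*}
is, up to sign, the cycle class map sending the generator $1_{D_i}$ to $[D_i]$. Its cokernel is $E_2^{0,2}={\Gr}^{W}_{2}H^2(X)$ by $E_2$-degeneration, and this equals $H^2(X)$ by the purity of $H^2(X)$ coming from Corollary \ref{cor: VT II}; this yields the asserted isomorphism. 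To see that the $D_i$ are linearly independent, I will show $\ker d_1^{-1,2}=0$: since $E_1^{-2,2}=H^{-2}(D(2))(-2)=0$ for trivial degree reasons, the kernel coincides with $E_2^{-1,2}={\Gr}^{W}_{2}H^1(X)$, and this vanishes by the purity of $H^1(X)$ (Lemma \ref{lem: pure BB} with $k=1<c$, or alternatively Proposition \ref{thm: VT II}).

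For part (2) the same scheme applies one degree higher. Again by \eqref{eqn: weight E1} and \eqref{eqn: Gysin sign}, the Gysin map in the statement agrees, up to a Tate twist, with the differential
\begin{equation*}
d_1^{-1,3} \colon E_1^{-1,3} = \bigoplus_{i=1}^{\alpha} H^1(D_i)(-1) \longrightarrow E_1^{0,3} = H^3({\Xcpt}).
\end{equation*}
The term $E_1^{-2,3}=H^{-1}(D(2))(-2)$ again vanishes for degree reasons, so $\ker d_1^{-1,3}=E_2^{-1,3}={\Gr}^{W}_{3}H^2(X)$, and this is zero by the purity of $H^2(X)$.

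I do not foresee a substantive obstacle: the argument is a direct unwinding of the weight spectral sequence once Corollary \ref{cor: VT II} is in hand. The only point requiring care is the vanishing of the two $E_1$ terms on $D(2)$ with negative cohomological index, which is what ensures that the relevant kernels are honest $E_2$-terms and not merely subquotients thereof.
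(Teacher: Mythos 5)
Your proposal is correct and follows essentially the same route as the paper: both read off the injectivity of the Gysin maps and the cokernel identification from the weight spectral sequence, using the vanishing of $E_1^{-2,2}$ and $E_1^{-2,3}$ for degree reasons together with the purity of $H^1(X)$ and $H^2(X)$ (so that $E_2^{-1,2}={\Gr}^W_2H^1(X)=0$, $E_2^{-1,3}={\Gr}^W_3H^2(X)=0$, and $E_2^{0,2}=H^2(X)$). No substantive differences from the paper's argument.
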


\begin{proof}
(1) Let $(E_{r}^{p,q}, d_r)$ be the weight spectral sequence for $X\hookrightarrow {\Xcpt}$. 
Since $H^1(X)$ is pure, we have $E_{2}^{-1,2}={\Gr}^{W}_{2}H^1(X)=0$ by \eqref{eqn: weight ss E2}. 
This means that the complex 
$E_{1}^{-2,2}\to E_{1}^{-1,2}\to E_{1}^{0,2}$ 
is exact. 
By \eqref{eqn: weight E1}, this complex is written as 
$0\to H^0(D(1))\to H^2({\Xcpt})$. 
This shows that the Gysin map 
\begin{equation*}
H^0(D(1)) = \bigoplus_{i=1}^{\alpha} {\C}D_{i} \to H^2({\Xcpt}) 
\end{equation*}
is injective. 
Here note that the sign $(-1)^{j-1}$ in \eqref{eqn: Gysin sign} is $1$. 
The cokernel of this map $E_{1}^{-1,2}\to E_{1}^{0,2}$ is 
\begin{equation*}
E_{2}^{0,2} = {\Gr}^W_2H^2(X) = W_2H^2(X) =H^2(X), 
\end{equation*}
where the last equality follows from the purity of $H^2(X)$. 

(2) By the purity of $H^2(X)$, we have 
$E_{2}^{-1,3} = {\Gr}_{3}^{W}H^2(X) = 0$ 
by \eqref{eqn: weight ss E2}. 
Hence the complex $E_{1}^{-2,3} \to E_{1}^{-1,3} \to E_{1}^{0,3}$ is exact. 
By \eqref{eqn: weight E1}, this complex is written as 
$0 \to H^1(D(1)) \to H^3({\Xcpt})$. 
It follows that the Gysin map $H^1(D(1)) \to H^3({\Xcpt})$ is injective. 
\end{proof}

In the case of Siegel modular varieties, 
linear independence of the boundary divisors was proved by Hoffman-Weintraub \cite{HW2} 
by using the vanishing of $H^1(X)$. 
Our proof uses instead the purity of $H^1(X)$. 
%This covers the cases where $H^1(X)$ does not vanish such as ball quotients. 

\section{Weight filtration and corank filtration}\label{sec: weight Siegel}

Let $X={\D}/{\G}$ be a locally symmetric variety as in \S \ref{sec: toroidal} with ${\G}$ neat. 
In \S \ref{sec: VT II}, 
we proved that the weight filtration on $F^kH^k(X)$ is trivial when $k<n$. 
When $k=n$, this is no longer true, and this case is in fact more interesting. 
This is our object of study in \S \ref{sec: weight Siegel} and \S \ref{sec: GrW}, 
which are the central part of this paper. 

Throughout \S \ref{sec: weight Siegel} and \S \ref{sec: GrW}, we use the abbreviations 
$F^n=F^nH^n(X)$ and $W_{m}F^{n}=W_{m}H^n(X) \cap F^{n}H^{n}(X)$. 
We denote by ${\Gr}_m^{W}F^n=W_{m}F^{n}/W_{m-1}F^{n}$ 
the $m$-th graded quotient of this weight filtration on $F^n$.  
%This is naturally isomorphic to the last Hodge component 
%$F^{n}{\Gr}^{W}_{m}H^{n}(X)$ of ${\Gr}^{W}_{m}H^n(X)$. 
%Let $n(1)< \cdots < n(r)$ be the sequence of natural numbers defined in \eqref{eqn: n(i)}. 
By Theorem \ref{thm: VT I}, we have ${\Gr}^{W}_{n+m}F^{n}=0$ 
unless $m=n(i)$ for some $0\leq i \leq r$. 
Hence the weight filtration on $F^n$ reduces to length $r$: 
\begin{equation}\label{eqn: weight filtration Fn}
W_nF^n \subset W_{n+n(1)}F^n \subset W_{n+n(2)}F^n \subset 
\cdots \subset W_{n+n(r)}F^n = F^n. 
\end{equation}
In this section we relate this filtration with modular forms. 
In \S \ref{ssec: corank filtration}, we define the ``corank filtration'' on the space of 
modular forms of canonical weight by using the Siegel operators. 
In \S \ref{ssec: weight=corank}, we prove that \eqref{eqn: weight filtration Fn} coincides with the corank filtration. 
A key step is construction of the Shioda-type maps \eqref{eqn: construct piF}. 
In \S \ref{ssec: GF monodromy} and \S \ref{ssec: GF invariant}, 
we present a restriction on their image. 
This is essentially a preliminary for the next \S \ref{sec: GrW}. 

Throughout \S \ref{sec: weight Siegel} and \S \ref{sec: GrW}, 
we fix a smooth projective toroidal compactification ${\Xcpt}$ of $X$ with SNC boundary divisor $D={\Xcpt}-X$,  
together with a numbering of the irreducible components of $D$. 
The latter is necessary for applying mixed Hodge theory. 
In particular, it gets rid of the ambiguity of sign when taking residue at strata of codimension $>1$.

\subsection{Corank filtration}\label{ssec: corank filtration}

Let ${\LL}$ be the automorphic line bundle of canonical weight extended over $X^{bb}$ (\S \ref{ssec: BB}). 
We denote by 
\begin{equation*}
M_{can}({\G}) := H^0(X^{bb}, {\LL}) = H^0(X, K_{X}) = H^0({\D}, K_{{\D}})^{{\G}} 
\end{equation*}
the space of ${\G}$-modular forms of canonical weight. 
Here the second equality holds because the boundary of $X^{bb}$ is assumed to be of codimension $\geq 2$. 
Let $S\!_{can}({\G})$ be the subspace of cusp forms, i.e., 
sections of ${\LL}$ over $X^{bb}$ which vanish at the boundary of $X^{bb}$. 
We often abbreviate $M_{can}=M_{can}({\G})$ and $S\!_{can} = S\!_{can}({\G})$ 
when the group ${\G}$ is clear from the context. 

The restriction ${\LL}|_{X_{F}}$ of ${\LL}$ to each cusp $X_{F}\subset X^{bb}$ 
is the descent of an automorphic line bundle on $F$ (\cite{BB}). 
Moreover, we can restrict ${\LL}$ to the closure $\overline{X}_{F}$ of $X_{F}$ in $X^{bb}$. 
The pullback of ${\LL}|_{\overline{X}_{F}}$ by the normalization map $X_{F}^{bb}\to \overline{X}_{F}$ 
gives an extension of ${\LL}|_{X_{F}}$ to the Baily-Borel compactification $X_{F}^{bb}$ of $X_{F}$ itself.  
We denote this line bundle on $X_{F}^{bb}$ by ${\LL}_{F}$. 
Then we write  
\begin{equation*}
M_{cat}(F) = M_{cat}(F, {\GFZ}) := H^0(X_{F}^{bb}, {\LL}_{F}), 
\end{equation*}
and denote by $S\!_{cat}(F)\subset M_{cat}(F)$ the subspace of cusp forms, 
i.e., sections vanishing at the boundary $X_{F}^{bb}-X_{F}$ of $X_{F}^{bb}$. 
Pulling back the sections of ${\LL}$ by $X_{F}^{bb}\to \overline{X}_{F}\subset X^{bb}$ 
defines a linear map  
\begin{equation*}
\Phi_{F} : M_{can} \to M_{cat}(F). 
\end{equation*}
This restriction operator is usually called the \textit{Siegel operator} at $F$. 

\begin{remark}
%The spaces $M_{cat}(F)$ and $S\!_{cat}(F)$ depend on ${\GFZ}$ of course, 
%but this is suppressed in the notation because ${\G}$ is fixed throughout this section. 
%We will write like $S\!_{cat}(F) = S\!_{cat}(F, {\GFZ})$ when we want to specify ${\GFZ}$.  
The notation $cat$ is an abbreviation of "canonical total", 
by which we intend to indicate that it corresponds to the canonical bundle of  
the total space of an abelian fibration over $F$ (cf.~ \cite{BB} \S 8.2). 
\end{remark}

For each $1\leq i \leq r$, 
we denote by $\mathcal{C}(i)$ the set of ${\G}$-equivalence classes of cusps $F$ of ${\D}$ with $\dim U(F)=n(i)$. 
For $0\leq i \leq r$, 
we define a subspace ${\Mi}$ of $M_{can}$ as the kernel of the total Siegel operator  
\begin{equation*}
(\Phi_{F})_{F} : M_{can} \to \bigoplus_{\substack{F\in \mathcal{C}(j) \\ j>i}} M_{cat}(F) 
\end{equation*}
to all cusps in $\mathcal{C}(j)$ with $j>i$. 
Thus ${\Mi}$ consists of modular forms which vanish at all cusps $F$ with $\dim U(F)>n(i)$. 
We have the filtration 
\begin{equation}\label{eqn: corank filtration}
S\!_{can} = M_{can}^{(0)} \subset M_{can}^{(1)} \subset M_{can}^{(2)} \subset \cdots \subset M_{can}^{(r)} = M_{can}. 
\end{equation}
If $F\in \mathcal{C}(i)$, the Siegel operator $\Phi_{F}$ sends modular forms in ${\Mi}$ to cusp forms on $F$: 
\begin{equation}\label{eqn: cuspidal Siegel}
\Phi_{F} : {\Mi} \to S\!_{cat}(F). 
\end{equation}
Indeed, cusps $F'$ with $F'\prec F$ satisfy $U(F)\subsetneq U(F')$ and so $\dim U(F')>\dim U(F)=n(i)$. 
Hence modular forms in ${\Mi}$ vanish at $F'$.  
We call the map \eqref{eqn: cuspidal Siegel} the \textit{cuspidal Siegel operator} at $F$. 
Each step $M_{can}^{(i-1)}\subset {\Mi}$ in the filtration \eqref{eqn: corank filtration} is given by 
\begin{equation}\label{eqn: corank filtration successive}
M_{can}^{(i-1)} = \ker \left( (\Phi_{F})_{F} : 
{\Mi} \longrightarrow \bigoplus_{F\in \mathcal{C}(i)}S\!_{cat}(F) \right). 
\end{equation}

When the algebraic group $\mathbb{G}$ is ${\Q}$-simple, 
it would be natural to call \eqref{eqn: corank filtration} the \textit{corank filtration}. 
In this case, every cusp of corank $>i+1$ is in the closure of a cusp of corank $i+1$ (\cite{BB} Theorem 3.8), 
so we can define ${\Mi}$ more simply as 
\begin{equation*}
{\Mi} = \ker \left( (\Phi_{F})_{F} : M_{can} \longrightarrow \bigoplus_{F\in \mathcal{C}(i+1)} M_{cat}(F) \right). 
\end{equation*}
%When $\mathbb{G}$ is not ${\Q}$-simple, this equality no longer holds in general. 
%The terminology ``corank filtration'' would be no longer appropriate in this case (cf.~\S \ref{sec: VT I}). 
%I do not want to create a new terminology. 

\subsection{Comparison with weight filtration}\label{ssec: weight=corank}

In this subsection we compare the weight filtration \eqref{eqn: weight filtration Fn} on $F^n$ with 
the filtration \eqref{eqn: corank filtration} on $M_{can}$. 
Recall from \eqref{eqn: FnHn K+D} that 
$F^n=F^nH^n(X)$ is naturally isomorphic to $H^0({\Xcpt}, K_{{\Xcpt}}(D))$. 
Combining this with the Mumford isomorphism $\pi^{\ast}{\LL}\simeq K_{{\Xcpt}}(D)$ (see \eqref{eqn: L = K+D}), 
we obtain a canonical isomorphism 
\begin{equation*}
\pi^{\ast} :  M_{can}({\G}) \stackrel{\simeq}{\to} 
H^0({\Xcpt}, \pi^{\ast}{\LL}) \simeq H^0({\Xcpt}, K_{{\Xcpt}}(D)) \simeq F^nH^n(X). 
\end{equation*}

%In this \S \ref{ssec: weight=corank} we prove the following.  

\begin{theorem}\label{thm: weight=corank}
The isomorphism 
$\pi^{\ast}\colon M_{can} \simeq F^n$ 
identifies the filtration \eqref{eqn: corank filtration} with the weight filtration \eqref{eqn: weight filtration Fn}. 
Moreover, for each cusp $F$ in $\mathcal{C}(i)$ and a top-dimensional cone $\sigma$ in $\Sigma_{F}$, 
we have the commutative diagram 
\begin{equation}\label{eqn: residue=Siegel}
\xymatrix{
W_{n+n(i)}F^n \ar[r]^-{{\res}_{F,\sigma}}  & H^0(K_{{\DFsbar}})  \\ 
M_{can}^{(i)} \ar[u]^{\pi^{\ast}}_{\simeq} \ar[r]^{\Phi_{F}} & S\!_{cat}(F) \ar@{^{(}-_>}[u]_{\pi_{F,\sigma}^{\ast}} 
}
\end{equation}
where ${\res}_{F,\sigma}$ is the residue map at ${\DFsbar}$ 
and $\pi_{F,\sigma}^{\ast}$ is the injective map constructed in \eqref{eqn: construct piF} below. 
\end{theorem}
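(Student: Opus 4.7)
The approach is to construct $\pi_F^*$ first, verify commutativity of \eqref{eqn: residue=Siegel} directly via a local computation in the Siegel domain, and then deduce the filtration identification from the description \eqref{eqn: WmFk residue kernel} of $W_{\bullet}F^n$ as successive kernels of residue maps. The guiding principle is that, via the Mumford isomorphism $\pi^*{\LL}\simeq K_{\Xcpt}(D)$, a modular form $f\in M_{can}$ becomes a logarithmic canonical form on $\Xcpt$; the Siegel operator $\Phi_F$ extracts the zero Fourier coefficient of $f$ along $U(F)_{\C}$, while the residue at a top-dimensional stratum over the $F$-locus extracts the same coefficient from the logarithmic form, so the two operations should match tautologically once the correspondence is set up carefully.

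For the construction of $\pi_F^*$, observe that for $F\in \mathcal{C}(i)$ and $\sigma$ a top-dimensional cone in $\Sigma_{F}$ we have $\dim\sigma=n(i)=\dim U(F)$, so $\mathbb{O}(\sigma)$ is a single point and $\DFs \simeq Y_{F}$, with $\DFsbar$ a smooth projective compactification of the abelian fibration $Y_F \to X_F$. I would produce a Shioda-type isomorphism realizing a cusp form $g\in S\!_{cat}(F)$ as a canonical form on $Y_F$: the weight ``cat'' is defined so that the pullback of ${\LL}_F$ from $X_F$ to $Y_F$ matches $K_{Y_F}$, the contribution of the abelian fibers being trivial. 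Pulling back $g$ and extending across the boundary of $\DFsbar$ by a Koecher-type argument (using the vanishing of $g$ at the Baily-Borel boundary of $X_F^{bb}$) yields the desired element of $H^0(K_{\DFsbar})$. Injectivity of $\pi_F^*$ follows from Zariski density of $Y_F$ in $\DFsbar$.

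To verify the diagram, I would work locally near a generic point of $\DFsbar$ in $q$-coordinates on $T(F)$ dual to a basis of ${\UFZ}$ adapted to the rays $\tau_1,\ldots,\tau_{n(i)}$ of $\sigma$, so that the local equations of the $n(i)$ boundary divisors meeting along $\DFsbar$ are $q_1=\cdots=q_{n(i)}=0$. Under the Mumford isomorphism, $f\in M_{can}$ corresponds locally to
\begin{equation*}
\pi^*f \; = \; \tilde{f}\cdot \frac{dq_1}{q_1}\wedge \cdots \wedge \frac{dq_{n(i)}}{q_{n(i)}} \wedge \omega,
\end{equation*}
where $\tilde f$ is the Fourier expansion of $f$ along $U(F)_{\C}$ and $\omega$ is a relative canonical form on $\VF\to F$. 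The iterated residue $\res_F$ extracts $\tilde f|_{q_1=\cdots=q_{n(i)}=0}$ up to the factor $(2\pi i)^{n(i)}$, which by definition of $\Phi_F$ is the restriction of $\Phi_F(f)$; tracing through the Shioda-type identification then gives $\res_F(\pi^*f)=\pi_F^*(\Phi_F(f))$, and in particular shows that this residue does not depend on the choice of $\sigma$.

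Once the diagram commutes, the filtration identification is essentially formal. If $f\in \Mi$, then $\Phi_{F'}(f)=0$ for every cusp $F'$ with $\dim U(F')>n(i)$; the same local computation, generalized to codimension $m>n(i)$ strata (possibly after a further residue along additional rays to reduce to the top-dimensional case), yields $\res_m(\pi^*f)=0$ on $D(m)$ for all $m>n(i)$, so $\pi^*f\in W_{n+n(i)}F^n$ by \eqref{eqn: WmFk residue kernel}. Conversely, if $\pi^*f\in W_{n+n(i)}F^n$, vanishing of the residue at every top-dimensional stratum over $F'\in \mathcal{C}(j)$ with $j>i$, combined with injectivity of $\pi_{F'}^*$, forces $\Phi_{F'}(f)=0$. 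The main obstacle will be the second step: setting up the Shioda-type map $\pi_F^*$ with the correct normalization and confirming that the Mumford isomorphism translates $f$ into precisely the log-differential form displayed above, so that the $(2\pi i)^{n(i)}$ of the residue meshes with the standard Fourier-expansion conventions. Once this bookkeeping is pinned down, the remainder of the argument is a direct application of the formalism recalled in \S \ref{sec: MHS}.
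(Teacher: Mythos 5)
Your overall plan — construct a Shioda-type map $\pi_F^*$, verify the diagram by a local Siegel-domain/Fourier-expansion computation, then read off the filtration equality from the residue description \eqref{eqn: WmFk residue kernel} of $W_\bullet F^n$ — is the same strategy the paper uses, and your local computation matches the explanation given just after the paper's proof. However, there are two points where the argument as written would not close.

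The first is the treatment of residues at strata that are \emph{not} top-dimensional in $\Sigma_{F}$. Your argument for $\pi^*(M_{can}^{(i)})\subseteq W_{n+n(i)}F^n$ needs $\res_m(\pi^*f)=0$ for all $m>n(i)$, and $D(m)$ contains components $\DFsbar$ with $\dim\sigma=m<\dim U(F)$. For such a stratum the residue takes values in $H^0(K_{\DFsbar})$, but your proposal that this vanishing should follow ``after a further residue along additional rays to reduce to the top-dimensional case'' is not a valid reduction: further residues only give information about deeper strata, not about $\DFsbar$ itself. What makes this case harmless is that ${\DFs}\to Y_F$ is then a torus bundle with positive-dimensional fibers, so $\DFsbar$ is uniruled and $H^0(K_{\DFsbar})=0$ outright — this is the content of Lemma~\ref{lem: residue only corank i} in the paper. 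Without this observation, the direct argument for the filtration equality does not go through, and the converse inclusion (that $\pi^*f\in W_{n+n(i)}F^n$ forces $f\in M_{can}^{(i)}$) similarly needs either the larger diagram with $M_{cat}(F')$ and $H^0(K_{\DFsbar}(\DFs'))$ — which you don't set up — or the paper's downward induction on the corank, in order to know at each stage that $\Phi_{F'}(f)$ is a cusp form on which your cusp-form version of $\pi_{F'}^*$ applies.

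The second point is more cosmetic but worth noting: your construction of $\pi_F^*$ (build the form on $Y_F$, then extend across the boundary of $\DFsbar$ by a Koecher-type argument using the vanishing of $g$) is heavier than necessary. The cleaner route, which the paper takes, is to observe that $\pi\colon \Xcpt\to X^{bb}$ restricted to $\DFsbar$ lifts (via normalization) to a surjective morphism $\pi_F\colon \DFsbar\to X_F^{bb}$, and the log adjunction \eqref{eqn: log adjunction} combined with the Mumford isomorphism gives $\pi_F^*{\LL}_F\simeq K_{\DFsbar}(\DFs')$ on the nose; then the cusp-form condition (vanishing along the boundary of $X_F^{bb}$) pulls back, under this isomorphism, to a section of the subsheaf $K_{\DFsbar}\subset K_{\DFsbar}(\DFs')$ because $\pi_F$ maps $\DFs'$ into the boundary of $X_F^{bb}$. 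No extension step is needed, and injectivity is immediate from surjectivity of $\pi_F$. With the uniruledness point supplied and the construction of $\pi_F^*$ tightened, your argument aligns with the paper's proof.
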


As a preliminary for the proof of Theorem \ref{thm: weight=corank}, we first note the following. 

\begin{lemma}\label{lem: residue only corank i}
Let $1\leq i \leq r$. 
Then $W_{n+n(i-1)}F^n$ is the kernel of the residue map 
\begin{equation*}
W_{n+n(i)}F^n \to \bigoplus_{(F,\sigma)} H^0(K_{{\DFsbar}}), 
\end{equation*}
where $(F, \sigma)$ ranges over ${\G}$-equivalence classes of pairs of 
a cusp $F$ with $\dim U(F)=n(i)$ and a top-dimensional cone $\sigma$ in $\Sigma_{F}$. 
\end{lemma}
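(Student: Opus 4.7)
The plan is to combine the sheaf-level formula \eqref{eqn: WmFk residue kernel} with Theorem \ref{thm: VT I} to reduce to the full residue map, and then to use a local Fourier analysis at the cusps, together with a convexity argument in $U(F)^{\vee}$, to show that the residue into each ``extra'' summand vanishes identically.

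By Theorem \ref{thm: VT I} applied to $k=n$, we have ${\Gr}^{W}_{n+m}F^{n}=0$ for $n(i-1)<m<n(i)$, so the weight filtration on $F^{n}$ jumps only at the indices $n(i)$ and $W_{n+n(i-1)}F^{n}=W_{n+n(i)-1}F^{n}$. By \eqref{eqn: WmFk residue kernel} this equals the kernel of the full residue map $\res_{n(i)}\colon W_{n+n(i)}F^{n}\to H^{0}(D(n(i)),K_{D(n(i))})$, whose target decomposes as $\bigoplus_{(F,\sigma):\,\dim\sigma=n(i)}H^{0}(K_{\DFsbar})$ by Lemma \ref{lem: D(m) toroidal}. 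Among these pairs, $\sigma$ is top-dimensional in $\Sigma_{F}$ iff $\dim U(F)=n(i)$; the lemma therefore reduces to showing that for every ``extra'' pair (i.e.\ $\dim U(F)=n(j)>n(i)$ and $\dim\sigma=n(i)$), the component $\res_{F,\sigma}$ is identically zero on $W_{n+n(i)}F^{n}$.

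To prove this I would use the Mumford isomorphism $\pi^{\ast}\LL\simeq K_{\Xcpt}(D)$ to represent $\omega\in W_{n+n(i)}F^n$ as $\pi^{\ast}\alpha$ with $\alpha\in M_{can}(\G)$. Choose a basis of $\UFZ$ whose first $n(i)$ elements are the primitive generators of the rays of $\sigma$, giving toric coordinates $q_{1},\dots,q_{n(j)}$ on $T(F)$ such that the divisors $D_{k}=\{q_{k}=0\}$ for $k\leq n(i)$ are locally the components of $D$ containing $\DFsbar$. Mumford's formula expresses $\omega$ near $F$ as
\begin{equation*}
\omega \,=\, c\cdot\sum_{t\in C(F)^{\vee}\cap U(F)^{\vee}_{\Z}}\alpha_{t}(v,w)\,q^{t}\,\bigwedge_{k=1}^{n(j)}\frac{dq_{k}}{q_{k}}\wedge dv\wedge dw,
\end{equation*}
so each summand has a log pole on $D_{k}$ precisely when $t_{k}=0$. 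At a point of any top-dimensional stratum $\overline{D(F,\tilde\sigma)}$ with $\dim\tilde\sigma=n(j)$, all $n(j)$ boundary divisors meet; the condition $\omega\in W_{n+n(i)}F^{n}$ then forces each Fourier term to have at most $n(i)$ log poles locally, and since the monomials $q^{t}$ are linearly independent no cancellation is possible, so every term with more than $n(i)$ log poles must vanish. In particular $\alpha_{0}=\Phi_{F}(\alpha)=0$, as the $t=0$ term has $n(j)>n(i)$ log poles.

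Finally, $\res_{F,\sigma}(\omega)$ is the coefficient of $\bigwedge_{k=1}^{n(i)}dq_{k}/q_{k}$ restricted to $\DFsbar$, picking out the Fourier terms with $t_{k}=0$ for all $k\leq n(i)$, i.e.\ $t\in\sigma^{\perp}\cap C(F)^{\vee}$. A convexity argument shows this intersection is $\{0\}$: if $t$ lies in it, then $(t,u_{0})=0$ for some $u_{0}\in\sigma^{\circ}\subset C(F)$, and since $C(F)$ is an open cone around $u_{0}$ on which $(t,\cdot)\geq 0$, the functional $(t,\cdot)$ vanishes identically on $U(F)$, forcing $t=0$. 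Hence $\res_{F,\sigma}(\omega)$ collapses to the $t=0$ term, a multiple of $\alpha_{0}$, which is zero. The main technical obstacle is to make the Mumford expansion in toric coordinates adapted to $\sigma$ fully rigorous and to translate ``$\omega\in W_{n+n(i)}F^{n}$'' cleanly into vanishing conditions on Fourier coefficients; the convexity step $C(F)^{\vee}\cap\sigma^{\perp}=\{0\}$ is then the short decisive ingredient.
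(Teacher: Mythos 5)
Your first reduction is exactly the paper's: invoke Theorem \ref{thm: VT I} to get $W_{n+n(i-1)}F^{n}=W_{n+n(i)-1}F^{n}$, then apply \eqref{eqn: WmFk residue kernel} together with the decomposition \eqref{eqn: D(m) toroidal} of $D(n(i))$ to identify $W_{n+n(i-1)}F^{n}$ with the kernel of $\res_{n(i)}$ into $\bigoplus_{(F,\sigma),\,\dim\sigma=n(i)} H^{0}(K_{\DFsbar})$. Where you diverge is the final step of eliminating the ``extra'' summands, i.e.\ pairs with $\dim U(F)>\dim\sigma=n(i)$. You propose a Mumford--Fourier expansion in toric coordinates adapted to $\sigma$, a count of logarithmic poles to pin down which Fourier terms are compatible with $W_{n(i)}$, and a convexity argument $C(F)^{\vee}\cap\sigma^{\perp}=\{0\}$ to show the residue collapses to the $t=0$ term, which you then argue vanishes. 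This can be made to work, but --- as you acknowledge --- it requires a nontrivial amount of care to translate ``$\omega\in W_{n+n(i)}F^{n}$'' into conditions on Fourier coefficients, and to check that the residue to $\DFsbar$ ends up being a well-defined regular form.

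The paper instead disposes of these summands with a one-line geometric observation: when $\dim U(F)>\dim\sigma$, the stratum $\DFs$ is a principal $\mathbb{O}(\sigma)$-bundle over $Y_{F}$ with \emph{positive-dimensional} torus fibers, so the compactification $\DFsbar$ is uniruled and therefore $H^{0}(K_{\DFsbar})=0$. In other words, the target summand itself vanishes, which is a priori stronger and strictly easier than showing the map into it is zero. The uniruledness argument sidesteps all the local Fourier analysis and the delicate matching of pole orders with the weight filtration. Your route would yield additional information (it essentially recovers $\Phi_{F}(\alpha)=0$ for cusps of higher corank, which the paper establishes later via the corank filtration), but for this lemma that is more than is needed and carries more risk in the ``make the expansion rigorous'' step; looking for the vanishing of the target, rather than the vanishing of the map, is the cleaner move here.
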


\begin{proof}
By Theorem \ref{thm: VT I}, we have $W_{n+n(i-1)}F^n= W_{n+n(i)-1}F^n$. 
By \eqref{eqn: WmFk residue kernel} and the description \eqref{eqn: D(m) toroidal} of $D(m)$,  
we see that $W_{n+n(i)-1}F^n$ is the kernel of the residue map
\begin{equation}\label{eqn: residue map weight n+n(i)}
W_{n+n(i)}F^n \to \bigoplus_{\substack{(F,\sigma) \\ \dim \sigma = n(i)}} H^0(K_{{\DFsbar}}), 
\end{equation}
where $(F, \sigma)$ ranges over all pairs with $\sigma\in \Sigma_{F}^{\circ}$ and $\dim \sigma = n(i)$ 
up to ${\G}$-equivalence. 
If $\dim U(F) < n(i)$, then $\dim \sigma < n(i)$ for any $\sigma\in \Sigma_{F}$, 
so such cusps $F$ do not appear in the target of \eqref{eqn: residue map weight n+n(i)}. 
If $\dim U(F) > \dim \sigma = n(i)$, 
the torus bundle ${\DFs}\to Y_{F}$ has positive-dimensional fibers. 
Then ${\DFsbar}$ is uniruled, so $H^0(K_{{\DFsbar}})$ vanishes. 
Therefore only pairs $(F, \sigma)$ with $\dim U(F) = \dim \sigma = n(i)$ 
contribute to the target of \eqref{eqn: residue map weight n+n(i)}. 
\end{proof}

For the proof of Theorem \ref{thm: weight=corank}, 
we need to construct the injective map 
$S\!_{cat}(F) \hookrightarrow H^0(K_{{\DFsbar}})$ 
in \eqref{eqn: residue=Siegel}. 
This requires a few steps below, 
and the construction will be completed in \eqref{eqn: construct piF}.  

Let $F$ be a cusp in $\mathcal{C}(i)$ and $\sigma$ be a top-dimensional cone in $\Sigma_{F}$.  
%Then the projection ${\DFs}\to Y_{F}$ is an isomorphism, 
%so ${\DFsbar}$ is a smooth projective compactification of $Y_{F}$. 
The projection $\pi \colon {\Xcpt}\to X^{bb}$ gives a surjective morphism ${\DFsbar}\to \overline{X}_{F}$. 
Since $X_{F}^{bb}$ is the normalization of $\overline{X}_{F}$, 
this lifts to a morphism to $X_{F}^{bb}$ by the universal property of normalization. 
We denote it by 
\begin{equation*}
\pi_{F,\sigma}\colon {\DFsbar}\to X_{F}^{bb}.  
\end{equation*}
By construction we have the commutative diagram 
\begin{equation}\label{CD: DFsbar XFbb}
\xymatrix{
{\DFsbar} \ar@{^{(}-_>}[r]  \ar[d]_{\pi_{F,\sigma}} & {\Xcpt}  \ar[d]^{\pi} \\ 
X_{F}^{bb} \ar[r] & X^{bb} 
}
\end{equation}

Let ${\DFs}'={\DFsbar}-{\DFs}$ be the boundary divisor of ${\DFsbar}$ itself. 
By Lemma \ref{lem: boundary DFs}, this is the intersection of ${\DFsbar}$ with 
the union of the irreducible components of $D$ that do not contain ${\DFs}$. 
By the log adjunction formula \eqref{eqn: log adjunction general} restricted to a connected component, 
we have 
\begin{equation}\label{eqn: log adjunction}
K_{{\Xcpt}}(D)|_{{\DFsbar}} \simeq K_{{\DFsbar}}({\DFs}'). 
\end{equation}
Thus we have the natural isomorphism 
\begin{equation}\label{eqn: KYF}
\pi_{F,\sigma}^{\ast}{\LL}_{F} \simeq (\pi^{\ast}{\LL})|_{{\DFsbar}} \simeq K_{{\DFsbar}}({\DFs}'). 
\end{equation}
Then the pullback by $\pi_{F,\sigma}$ and the isomorphism \eqref{eqn: KYF} define a linear map 
\begin{equation}\label{eqn: construct piF larger}
\pi_{F,\sigma}^{\ast} : M_{cat}(F) = H^0(X_{F}^{bb},  {\LL}_{F}) 
\to H^0({\DFsbar}, \: K_{{\DFsbar}}({\DFs}')). 
\end{equation}
Since $\pi_{F,\sigma}$ is surjective, the pullback map $\pi_{F,\sigma}^{\ast}$ is injective. 

We shall show that $\pi_{F,\sigma}^{\ast}(S\!_{cat}(F))$ is contained in $H^0(K_{{\DFsbar}})$. 
Since $\sigma$ is top-dimensional in $\Sigma_{F}$, 
every stratum $D(F', \sigma')$ of ${\DFs}'$ in \eqref{eqn: closure stratum} 
satisfies $F'\prec F$. 
Thus $\pi_{F,\sigma}$ maps ${\DFs}'$ to the boundary of $X_{F}^{bb}$.
If $\mathcal{I}$ is the ideal sheaf of the boundary of $X_{F}^{bb}$, 
this implies that $\pi_{F,\sigma}^{\ast}\mathcal{I}$ is contained in the ideal sheaf of ${\DFs}'$. 
Therefore we have  
\begin{equation*}
\pi_{F,\sigma}^{\ast}(\mathcal{I} \! \cdot \! {\LL}_{F}) \stackrel{\eqref{eqn: KYF}}{\hookrightarrow} 
\mathcal{O}_{{\DFsbar}}(-{\DFs}') \cdot K_{{\DFsbar}}({\DFs}') = K_{{\DFsbar}}. 
\end{equation*}
In this way, as a restriction of \eqref{eqn: construct piF larger}, we obtain an injective map 
\begin{equation}\label{eqn: construct piF}
\pi_{F,\sigma}^{\ast} : S\!_{cat}(F) = H^0(X_{F}^{bb}, \: \mathcal{I}\! \cdot \! {\LL}_{F}) 
\hookrightarrow H^0({\DFsbar}, \: K_{{\DFsbar}}). 
\end{equation}

We can now proceed to the proof of Theorem \ref{thm: weight=corank}. 

\begin{proof}[(Proof of Theorem \ref{thm: weight=corank})]
We proceed inductively on $i$, starting from $i=r$ and decreasing as follows. 
The first step $i=r$ is just the isomorphism $\pi^{\ast}\colon M_{can}\simeq F^n$ itself. 
Suppose that we have proved that 
$\pi^{\ast}\colon M_{can}\to F^n$ maps ${\Mi}$ to $W_{n+n(i)}F^n$. 
Then we will prove that the diagram \eqref{eqn: residue=Siegel} commutes 
for each $(F, \sigma)$. 
This commutativity and the injectivity of 
$S\!_{cat}(F) \hookrightarrow H^0(K_{{\DFsbar}})$ 
imply that the isomorphism 
$\pi^{\ast}\colon {\Mi} \to W_{n+n(i)}F^n$ maps $\ker(\Phi_{F})$ to $\ker({\res}_{F,\sigma})$. 
This shows in particular that $\ker({\res}_{F,\sigma})$ does not depend on $\sigma$. 
Taking the intersection over all cusps in $\mathcal{C}(i)$, 
we see that the following equalities hold in 
$W_{n+n(i)}F^n=\pi^{\ast}({\Mi})$: 
\begin{equation*}
\pi^{\ast}(M_{can}^{(i-1)}) = \pi^{\ast}\left( \bigcap_{F\in \mathcal{C}(i)}\ker(\Phi_{F}) \right) 
= \bigcap_{F\in \mathcal{C}(i)}\ker({\res}_{F,\sigma}) = W_{n+n(i-1)}F^n. 
\end{equation*}
Here the first equality follows from \eqref{eqn: corank filtration successive}, 
and the last equality follows from Lemma \ref{lem: residue only corank i}. 
In this way we could proceed from step $i$ to step $i-1$. 
It thus remains to verify the commutativity of \eqref{eqn: residue=Siegel} in step $i$, 
assuming that we have proved that $\pi^{\ast}\colon M_{can}\to F^n$ maps ${\Mi}$ to $W_{n+n(i)}F^n$. 

We shall prove that the larger diagram 
\begin{equation}\label{eqn: residue=Siegel larger}
\xymatrix@C+1pc{
F^n \ar[r]^-{{\res}_{F,\sigma}}  & H^0(K_{{\DFsbar}}({\DFs}'))  \\ 
M_{can} \ar[u]^{\pi^{\ast}}_{\simeq} \ar[r]^{\Phi_{F}} & M_{cat}(F) \ar@{^{(}-_>}[u]_{\pi_{F,\sigma}^{\ast}} 
}
\end{equation}
commutes. 
The commutativity of \eqref{eqn: residue=Siegel} then follows by 
restricting this diagram to subspaces. 
Let $\omega\in M_{can}$. 
We consider the commutative diagram \eqref{CD: DFsbar XFbb}. 
Since the Siegel operator $\Phi_{F}$ is the pullback from $X^{bb}$ to $X_{F}^{bb}$ 
and $\pi_{F,\sigma}^{\ast}$ is given by 
the pullback by $\pi_{F,\sigma}$ plus the isomorphism \eqref{eqn: KYF}, 
we see that $\pi_{F,\sigma}^{\ast}(\Phi_{F}(\omega))$ 
is the restriction of the log canonical form $\pi^{\ast}\omega$ to ${\DFsbar}\subset {\Xcpt}$ via 
the adjunction isomorphism \eqref{eqn: log adjunction}. 
Since the adjunction \eqref{eqn: log adjunction} is given by 
taking the residues of log canonical forms along ${\DFsbar}$, we obtain 
$\pi_{F,\sigma}^{\ast}(\Phi_{F}(\omega)) = {\res}_{F,\sigma}(\pi^{\ast}\omega)$. 
This means that \eqref{eqn: residue=Siegel larger} commutes. 
\end{proof}

The diagram \eqref{eqn: residue=Siegel larger} can be explicitly understood 
at the level of ${\SFs}\subset {\XFcpt}$ as follows. 
Let $(z_j, u_k, t_l)$ be a Siegel domain coordinate on ${\D}\subset {\D}(F)$ as in \cite{AMRT} \S IV.1.1. 
After a change of variables for $z_{j}$, 
we may assume that ${\SFs}$ is defined by $q_1=\cdots = q_{m}=0$ 
where $q_{j}=\exp (2\pi i z_j)$ and $m=n(i)$. 
We abbreviate $dz_{j}=\wedge_{j}dz_{j}$ etc. 
Then we can express $\omega\in M_{can}$ as 
\begin{equation*}
\omega = f(z_j, u_k, t_l) dz_j \wedge du_{k} \wedge dt_l  =  
\frac{1}{(2\pi i)^{m}} f(z_j, u_k, t_l) \frac{dq_j}{q_j} \wedge du_{k} \wedge dt_l. 
\end{equation*}
Hence the residue of $\omega$ at ${\SFs}\simeq {\VF}$ is given by 
$f(i\infty, u_k, t_l) du_{k} \wedge dt_l$. 
On the other hand, after trivializing $K_{{\D}}$ by the frame $dz_j \wedge du_{k} \wedge dt_l$, 
the Siegel operator at $F$ sends $f$ to $f(i\infty, u_k, t_l)$, 
which does not depend on $u_{k}$ (cf.~\cite{BB} \S 8.2 -- \S 8.5). 
The map $\pi_{F,\sigma}^{\ast}$ recovers the hidden frame $du_{k} \wedge dt_l$ and 
sends this section to the canonical form 
$f(i\infty, u_k, t_l)du_{k} \wedge dt_l$ on ${\VF}$. 
A prototype of such a correspondence can be found in the work of Shioda \cite{Shioda} on elliptic modular surfaces.

\subsection{Monodromy action of ${\GF}$}\label{ssec: GF monodromy}

In the next \S \ref{ssec: GF invariant}, we will show that the image of the map \eqref{eqn: construct piF} 
satisfies invariance under the ``monodromy action'' of ${\GF}$. 
In this \S \ref{ssec: GF monodromy}, as a preliminary, let us first explain this action. 

Let $F$ be a cusp of ${\D}$. 
Recall from \S \ref{ssec: stratification} that a local model around the $F$-stratum $\pi^{-1}(X_{F})$ in ${\Xcpt}$ 
is provided by ${\XFcpt}/{\GFZbar}$. 
We consider the intermediate quotient 
\begin{equation*}
{\YFcpt} := {\XFcpt}/{\GFZbd}. 
\end{equation*}
Since ${\GFZbd}$ is normal in ${\GFZbar}$, 
the quotient group ${\GF}$ acts on ${\YFcpt}$. 
%Then ${\XFcpt}/{\GFZbar}$ is identified with the free quotient of ${\YFcpt}$ by ${\GF}$. 
The natural projection ${\XFcpt}\to {\VF}$, being ${\GFZbar}$-equivariant, 
descends to a ${\GF}$-equivariant map ${\YFcpt}\to Y_{F}$. 
The situation is summarized as  
\begin{equation*}
\xymatrix@C+1pc{
{\XFcpt} \ar[r]^{/{\GFZbd}}   \ar[d] & {\YFcpt}  \ar[d] \ar[r]^{/{\GF}} & {\XFcpt}/{\GFZbar}  \ar[r] & {\Xcpt} \\ 
{\VF} \ar[r]^{/{\GFZbd}}  & Y_{F}  
}
\end{equation*}

For an $F$-cone $\sigma\in \Sigma_{F}^{\circ}$, 
we denote by $\hat{D}(F, \sigma)$ the image of ${\SFs}$ in ${\YFcpt}$. 
Then $\hat{D}(F, \sigma)\simeq {\SFs}/{\GFZbd}$. 
Although the natural map $\hat{D}(F, \sigma) \to {\DFs}$ is an isomorphism, 
it is important to distinguish its source and target. 
The group ${\GF}$ permutes the strata $\hat{D}(F, \sigma)$ according to its free action on $\Sigma_{F}^{\circ}$. 
We denote by $p_{\sigma}\colon \hat{D}(F, \sigma)\to Y_{F}$ the restriction of ${\YFcpt}\to Y_{F}$ to $\hat{D}(F, \sigma)$. 
This is a principal torus bundle for the quotient torus of $T(F)$ defined by $\sigma$. 
If we let $\gamma\in {\GF}$ act, we have the commutative diagram 
\begin{equation}\label{CD: gamma act on Dhat}
\xymatrix{
\hat{D}(F, \sigma) \ar[r]^{\gamma}  \ar[d]_{p_{\sigma}} & \hat{D}(F, \gamma \sigma) \ar[d]^{p_{\gamma\sigma}}  \\ 
Y_{F} \ar[r]^{\gamma} & Y_{F}  
}
\end{equation}

Now let $\sigma\in \Sigma_{F}$ be a top-dimensional cone. 
In this case $p_{\sigma}$ is an isomorphism. 
Hence the $\gamma$-action on $Y_{F}$ is identified with the $\gamma$-action 
$\hat{D}(F, \sigma)\to \hat{D}(F, \gamma \sigma)$ inside ${\YFcpt}$ 
via $p_{\sigma}$ and $p_{\gamma\sigma}$. 
On the other hand, we also have a ``monodromy isomorphism'' $\hat{D}(F, \sigma)\to \hat{D}(F, \gamma \sigma)$ as follows. 

Let $\sigma_{+}, \sigma_{-}$ be two top-dimensional cones in $\Sigma_{F}$ 
sharing a codimension $1$ face $\tau$. 
The projection 
\begin{equation}\label{eqn: P1 bundle}
p_{\sigma_{+}} \sqcup p_{\tau} \sqcup p_{\sigma_{-}} \: : \: 
\hat{D}(F, \sigma_{+}) \sqcup \hat{D}(F, \tau) \sqcup \hat{D}(F, \sigma_{-}) \to Y_{F} 
\end{equation}
is a ${\proj}^1$-bundle over $Y_{F}$ 
with $\hat{D}(F, \sigma_{+})$ the $0$-section and $\hat{D}(F, \sigma_{-})$ the $\infty$-section. 
This ruling structure defines a natural isomorphism 
$\hat{D}(F, \sigma_{+})\to \hat{D}(F, \sigma_{-})$ 
which coincides with $p_{\sigma_{-}}^{-1} \circ p_{\sigma_{+}}$. 

We connect $\sigma$ and $\gamma \sigma$ by a chain of such adjacent cones. 
This defines an isomorphism 
\begin{equation*}
\Psi: \hat{D}(F, \sigma)\to \hat{D}(F, \gamma \sigma) 
\end{equation*}
which satisfies $p_{\gamma\sigma} \circ \Psi = p_{\sigma}$. 
(This equality characterizes $\Psi$ uniquely.) 
The $\gamma$-action on $Y_{F}$, now translated to an automorphism of $\hat{D}(F, \sigma)$ via $p_{\sigma}$, 
can be understood as the difference of the $\gamma$-action $\hat{D}(F, \sigma)\to \hat{D}(F, \gamma \sigma)$ 
and the monodromy isomorphism $\Psi$.  

Finally, we come to a matter of orientation. 
According to Lemma \ref{lem: D(m) toroidal}, 
the numbering of the irreducible components of ${\Xcpt}-X$ 
defines a numbering of the rays of $\sigma$ and of $\gamma\sigma$. 
This determines an orientation of $\sigma$ and $\gamma\sigma$. 
(Let us call it the MHS orientation.) 
Since $\sigma$ and $\gamma\sigma$ are top-dimensional cones, 
we can compare their orientation via that of $U(F)$. 
Then the following holds. 

\begin{lemma}\label{lem: orientation}
$\sigma$ and $\gamma\sigma$ have the same orientation. 
\end{lemma}

\begin{proof}
By construction, $\gamma \colon \sigma \to \gamma \sigma$ 
preserves the MHS orientation. 
On the other hand, since ${\GF}$ is neat, 
$\gamma \colon U(F) \to U(F)$ 
preserves the orientation of $U(F)$. 
%Hence $\gamma \colon \sigma \to \gamma \sigma$ also preserves the orientation of $\sigma$ and $\gamma\sigma$ 
%induced by that of $U(F)$. 
This implies our assertion. 
\end{proof}

\subsection{Monodromy invariance of the Shioda-type map}\label{ssec: GF invariant}
 
Let $[\sigma]\in \Sigma_{F}/{\GF}$ be a ${\GF}$-equivalence class of top-dimensional cones.  
We go back to the map 
$\pi_{F,\sigma}^{\ast}\colon S\!_{cat}(F)\to H^0(K_{{\DFsbar}})$ 
constructed in \eqref{eqn: construct piF}. 
We choose a cone $\sigma\in \Sigma_{F}$ representing $[\sigma]$. 
This enables us to take the isomorphism 
\begin{equation*}
{\DFs} \simeq \hat{D}(F, \sigma) \stackrel{p_{\sigma}}{\to} Y_{F}, 
\end{equation*}
which we again denote by $p_{\sigma}$. 
We denote by 
\begin{equation*}
\pi_{F}^{\ast} : S\!_{cat}(F)\to H^0(K_{\overline{Y}_{F}}) 
\end{equation*}
the composition of $\pi_{F,\sigma}^{\ast}$ and $p_{\sigma \ast}$ 
where $\overline{Y}_{F}$ is an arbitrary smooth projective compactification of $Y_{F}$. 
Then the following holds. 

\begin{proposition}\label{prop: Shioda monodromy}
The map $\pi_{F}^{\ast}$ does not depend on the choice of the cone $\sigma$ representing $[\sigma]\in \Sigma_{F}/{\GF}$, 
and its image is contained in $H^0(K_{\overline{Y}_{F}})^{{\GF}}$. 
\end{proposition}

\begin{proof}
By the diagram \eqref{CD: gamma act on Dhat}, 
the ${\GF}$-invariance follows from the independence from $\sigma$. 
We prove the latter. 
Since $\pi_{F}^{\ast}$ is defined locally, 
it suffices to check the independence for local sections of $\mathcal{L}|_{X_{F}}$. 

Let $U\subset X^{bb}$ be a small open neighborhood of a point of $X_{F}$. 
Let $\omega$ be a section of $\mathcal{L}|_{X_{F}}$ over $U\cap X_{F}$. 
We can extend $\omega$ to a section of $\mathcal{L}$ over $U$, 
which is identified with a canonical form on $U\cap X$. 
We take its pullback to ${\YFcpt}$. 
This is a ${\GF}$-invariant log canonical form $\tilde{\omega}$ on 
a neighborhood of the inverse image $V\subset {\YFcpt}$ of $U\cap X_{F}$. 
Then we have 
\begin{equation*}
\pi_{F}^{\ast}(\omega) = p_{\sigma \ast} \pi_{F,\sigma}^{\ast}(\omega) = 
p_{\sigma \ast}({\res}_{\sigma}(\tilde{\omega})), 
\end{equation*}
where ${\res}_{\sigma}$ means the residue at $V\cap \hat{D}(F, \sigma)$. 
Since the monodromy isomorphism 
$\Psi \colon \hat{D}(F, \sigma) \to \hat{D}(F, \gamma\sigma)$ 
satisfies $p_{\gamma \sigma} \circ \Psi = p_{\sigma}$, 
what has to be shown is 
\begin{equation}\label{eqn: monodromy invariance residue}
{\res}_{\sigma}(\tilde{\omega}) = \Psi^{\ast}({\res}_{\gamma \sigma}(\tilde{\omega})). 
\end{equation}
Here the residues at $\hat{D}(F, \sigma)$ and $\hat{D}(F, \gamma \sigma)$ 
are taken with respect to the MHS orientation of $\sigma$ and $\gamma\sigma$ respectively. 
By Lemma \ref{lem: orientation}, they come from a common orientation of $U(F)$. 
This in turn induces an orientation of every top-dimensional cone in $\Sigma_{F}$. 
With this orientation system for taking residues, we shall prove 
\begin{equation}\label{eqn: residue independent}
{\res}_{\sigma_{+}}(\tilde{\omega}) = {\res}_{\sigma_{-}}(\tilde{\omega}) 
\end{equation}
for any two adjacent top-dimensional cones $\sigma_{+}, \sigma_{-}$ sharing a codimension $1$ face $\tau$ 
as in \eqref{eqn: P1 bundle}. 
Our assertion \eqref{eqn: monodromy invariance residue} then follows by connecting $\sigma$ and $\gamma\sigma$ 
by a chain of such cones. 

With the orientation of $\tau$ induced by that of $\sigma_{+}$, 
we take the residue of $\tilde{\omega}$ at $V\cap \hat{D}(F, \tau)$. 
This is a log canonical form on the restriction of the ${\proj}^1$-bundle \eqref{eqn: P1 bundle} over an open set of $Y_{F}$, 
with pole at $\hat{D}(F, \sigma_{+})$ and $\hat{D}(F, \sigma_{-})$. 
Taking its residue at $0\in {\proj}^1$ gives ${\res}_{\sigma_{+}}(\tilde{\omega})$, 
while at $\infty \in {\proj}^1$ gives $-{\res}_{\sigma_{-}}(\tilde{\omega})$. 
Here the sign gets reversed because the orientation of $\tau$ induced by that of $\sigma_{-}$ 
is opposite to the one induced by $\sigma_{+}$. 
Since ${\res}_{0}(dz/z)=-{\res}_{\infty}(dz/z)$ 
for the standard log canonical form $dz/z$ on ${\proj}^1$, 
the sign change cancels and we obtain \eqref{eqn: residue independent}. 
\end{proof}

In \S \ref{sec: GrW}, we will show that 
${\rm Im}(\pi_{F}^{\ast}) = H^0(K_{\overline{Y}_{F}})^{{\GF}}$ 
in most cases.

\section{Weight graded quotients}\label{sec: GrW}

We keep the setting and notation of \S \ref{sec: weight Siegel}. 
In this section we calculate the graded quotients of the weight filtration 
$W_{\bullet}F^{n}$ on $F^{n}=F^{n}H^n(X)$ 
%This is purely calculation in mixed Hodge theory. 
and give an application to the Siegel operators.

\subsection{Main results}\label{ssec: main GrW}

We first state our main result (Theorem \ref{thm: GrWFn}) and derive its immediate consequences, 
postponing the proof to the next \S \ref{ssec: proof GrW}. 
Let $1\leq i \leq r$. 
We calculate 
$W_{n+n(i)}F^n/W_{n+n(i-1)}F^n$ 
under the condition 
\begin{equation}\label{eqn: n(i) condition}
n(i) - n(i-1) > 1. 
\end{equation} 
This condition implies that 
if $F\in \mathcal{C}(i)$ and $F'\succ F$, then 
$U(F')$ is of codimension $\geq 2$ in $U(F)$. 
Therefore no codimension $1$ cone in $\Sigma_{F}$ is contained in the boundary 
$C(F)^{\ast}-C(F)$ of $C(F)$. 
In other words: 
\begin{equation}\label{eqn: codim 1 F-cone}
\textrm{If $F\in \mathcal{C}(i)$, every codimension $1$ cone $\tau$ in $\Sigma_{F}$ is an $F$-cone.}
\end{equation}
In particular, for such a cone $\tau$, 
there are exactly two top-dimensional cones $\sigma_{+}, \sigma_{-}\in \Sigma_{F}$ 
which have $\tau$ as their face. 
We will use the condition \eqref{eqn: n(i) condition} in this way.

\begin{theorem}\label{thm: GrWFn}
Let $1\leq i \leq r$ and assume that the condition \eqref{eqn: n(i) condition} holds. 
Then we have an isomorphism 
\begin{equation*}
W_{n+n(i)}F^n/W_{n+n(i-1)}F^n \to 
\bigoplus_{F\in \mathcal{C}(i)} H^0(K_{\overline{Y}_{F}})^{{\GF}} 
\end{equation*}
induced by the residue maps at ${\DFs}\simeq Y_{F}$ where $\sigma$ is an arbitrary top-dimensional cone in $\Sigma_{F}$. 
\end{theorem}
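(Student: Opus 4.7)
My plan is to compute $W_{n+n(i)}F^n / W_{n+n(i-1)}F^n$ via the weight spectral sequence on the Hodge level $F^n$, use condition \eqref{eqn: n(i) condition} to decouple contributions by cusp $F\in\mathcal{C}(i)$, and identify the resulting local complex with a cellular coboundary map on a natural CW complex built from $\Sigma_F/{\G}_F$.

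First I would use the identification \eqref{eqn: GrWF kernel} to write
\begin{equation*}
W_{n+n(i)}F^n / W_{n+n(i-1)}F^n \; \simeq \; \ker\!\bigl(H^0(K_{D(n(i))}) \xrightarrow{d_1} H^{n-n(i)+1,1}(D(n(i)-1))\bigr),
\end{equation*}
where $d_1$ is the signed sum of Gysin maps from \eqref{eqn: Gysin sign}. By the toroidal description \eqref{eqn: D(m) toroidal}, the source is $\bigoplus_{(F,\sigma)}H^0(K_{\DFsbar})$ over ${\G}$-classes of pairs with $\dim\sigma=n(i)$, and the uniruledness argument in the proof of Lemma \ref{lem: residue only corank i} kills all contributions except those with $F\in\mathcal{C}(i)$. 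Under condition \eqref{eqn: n(i) condition} (equivalently \eqref{eqn: codim 1 F-cone}), every codim-$1$ face of such a $\sigma$ is itself an $F$-cone, so the Gysin maps out of $H^0(K_{\DFsbar})$ land entirely in the subsum $\bigoplus_{[\tau]}H^1(K_{\overline{D(F,\tau)}})$ with $\tau\in\Sigma_F^\circ$ codim-$1$. Hence both sides of $d_1$ decompose along $F\in\mathcal{C}(i)$, and it suffices to show, for each fixed $F$, that the kernel of the local complex
\begin{equation*}
d_1^F : \bigoplus_{[\sigma]} H^0(K_{\DFsbar}) \longrightarrow \bigoplus_{[\tau]} H^1(K_{\overline{D(F,\tau)}})
\end{equation*}
maps isomorphically to a single summand $H^0(K_{\overline{D(F,\sigma_0)}})$ under projection.

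Since $\sigma$ is top-dimensional, $\DFs\simeq Y_F$ and $\DFsbar$ is a smooth projective birational compactification of $Y_F$. By birational invariance of $H^0$ of the canonical bundle, all $H^0(K_{\DFsbar})$ with top-dim $\sigma$ are canonically identified with a common subspace $V_F\subset H^0(K_{Y_F})$. For a codim-$1$ cone $\tau$ shared by top-dim $\sigma,\sigma^\dag$ (which are ${\G}_F$-inequivalent by neatness, as noted just before the theorem), both $\DFsbar$ and $\overline{D(F,\sigma^\dag)}$ sit inside $\overline{D(F,\tau)}$ as smooth divisor components of its boundary, and the two Gysin maps $V_F\to H^1(K_{\overline{D(F,\tau)}})$ arise as connecting homomorphisms of the log residue sequence
\begin{equation*}
0\to K_{\overline{D(F,\tau)}}\to K_{\overline{D(F,\tau)}}(\DFsbar+\overline{D(F,\sigma^\dag)}+\cdots)\to K_{\DFsbar}\oplus K_{\overline{D(F,\sigma^\dag)}}\oplus\cdots\to 0
\end{equation*}
on $\overline{D(F,\tau)}$. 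A sign computation using the rule $(-1)^{j-1}$ in \eqref{eqn: Gysin sign} --- which is exactly where the simplicial sign of the cellular coboundary enters --- shows that under the canonical identifications the two Gysin maps differ by a sign, so the $\tau$-component of $d_1^F$ becomes the difference $\omega_\sigma-\omega_{\sigma^\dag}\in V_F$.

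Finally, $d_1^F$ is thereby realized as the cellular coboundary on the CW complex $|P_F|:=(C(F)^\circ/\R_{>0})/{\G}_F$ --- top cells correspond to top-dim $F$-cones, codim-$1$ cells to codim-$1$ $F$-cones (by \eqref{eqn: codim 1 F-cone}) --- with constant coefficient $V_F$. The kernel is the space of globally constant top cochains, which is a single copy of $V_F$ provided the dual graph of top cells of $|P_F|$ is connected: any two top-dim $F$-cones in $\Sigma_F^\circ$ can be joined by a path of top-dim cones sharing codim-$1$ faces, contained entirely in the connected open cone $C(F)^\circ$, and this is exactly where \eqref{eqn: n(i) condition} is crucial (without it the path could be blocked by hitting $\partial C(F)$). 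Summing over $F\in\mathcal{C}(i)$ gives Theorem \ref{thm: GrWFn}. The main obstacle I anticipate is the sign computation in the previous paragraph: the indexing of rays of $\sigma,\sigma^\dag$ is only well-defined up to ${\G}$-action, and reconciling the canonical identifications $H^0(K_{\DFsbar})=H^0(K_{\overline{D(F,\sigma^\dag)}})=V_F$ with the combinatorial signs $(-1)^{j-1}$ to obtain a \emph{genuine} cellular coboundary on $|P_F|$ is precisely the ``compatibility of the simplicial structure in the weight spectral sequence with that in the toroidal compactification'' mentioned in the introduction.
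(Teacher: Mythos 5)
Your route is the paper's route: reduce via \eqref{eqn: GrWF kernel} and \eqref{eqn: D(m) toroidal} to the signed Gysin complex out of $H^0(K_{D(n(i))})$, discard all pairs except $F\in\mathcal{C}(i)$ with $\sigma$ top-dimensional by the uniruledness argument, use \eqref{eqn: codim 1 F-cone} to decouple the cusps, identify every $H^0(K_{\DFsbar})$ with a fixed space $H^0(K_{\overline{Y}_F})$ by birational invariance, and read off the kernel as a piece of combinatorial topology of $\Sigma_F/{\G}_F$. The paper does the wall-crossing step slightly differently from your log-residue sequence: it shows each single Gysin map $H^0(K_{\DFsbar})\to H^{n-n(i)+1,1}(\overline{D(F,\tau)})$ is an \emph{isomorphism}, because $\overline{D(F,\tau)}$ is birationally a ${\proj}^1$-bundle over $\overline{Y}_F$ with $\DFsbar$ a rational section; this makes the identification of the $\tau$-component with $H^0(K_{\overline{Y}_F})\otimes{\C}[\tau]$ canonical and independent of which of the two adjacent divisors one uses, so the whole complex becomes $H^0(K_{\overline{Y}_F})\otimes\partial$ for the chain complex of the $\Delta$-complex $K(\Sigma_F)/{\G}_F$ (Claim \ref{claim: chain map}).

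The genuine gap is exactly the point you flag at the end and then leave unresolved: the global sign consistency. Connectivity of the dual graph of top-dimensional cones is not the issue (it holds with or without \eqref{eqn: n(i) condition}, since $C(F)$ is open and convex; the real role of the condition is that codimension-one cones are interior, so each codimension-one cell has exactly two cofaces and all relevant strata stay attached to the same cusp $F$ — your parenthetical about paths being ``blocked by $\partial C(F)$'' misplaces this). Connectivity alone does not give that the kernel is one copy of $V_F$: if the local relations $\pm\omega_\sigma\pm\omega_{\sigma^\dag}=0$ could not be globally trivialized, i.e.\ if the quotient complex were non-orientable, the kernel would be $0$ and the theorem would fail, so ``the two Gysin maps differ by a sign'' must be established coherently over all walls, not wall by wall. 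The paper supplies precisely this missing ingredient as Lemma \ref{lem: top homology}: $K(\Sigma_F)/{\G}_F$ is a \emph{closed} pseudomanifold by \eqref{eqn: codim 1 F-cone} and is \emph{oriented} because the ${\G}_F$-action preserves the orientation of $K(\Sigma_F)$ (neatness), hence its top homology is one-dimensional, generated by the fundamental class; since that class has nonzero coefficient on every top simplex, projection of the kernel to any single summand $H^0(K_{\DFsbar})$ is an isomorphism. Until you prove this orientability/fundamental-class statement (or an equivalent global reconciliation of the orderings of rays entering the signs \eqref{eqn: Gysin sign} across ${\G}$-equivalence classes), your description of the kernel as ``globally constant top cochains'' is an assertion of the theorem's key content rather than a proof of it.
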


A special case of Theorem \ref{thm: GrWFn}  
generalizes some results of Oda \cite{Od}, Ziegler \cite{Fr} and Miyazaki \cite{Mi}: 

\begin{corollary}\label{cor: 0-dim cusp}
Assume that $n(r)=n$, i.e., ${\D}$ has a tube domain realization, and $n(r-1)<n-1$. 
Then $\dim {\Gr}^{W}_{2n}H^n(X)$ is equal to the number of $0$-dimensional cusps of $X^{bb}$. 
\end{corollary}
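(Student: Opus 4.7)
The plan is to apply Theorem \ref{thm: GrWFn} with $i=r$ and then unwind both sides explicitly. First, the hypothesis $n(r-1)<n-1=n(r)-1$ ensures $n(r)-n(r-1)>1$, so the condition \eqref{eqn: n(i) condition} is satisfied. Next, by Theorem \ref{thm: VT I}, ${\Gr}^W_{2n}H^n(X)$ is a Tate twist of a weight-zero effective Hodge structure, hence is concentrated in Hodge bidegree $(n,n)$; in particular ${\Gr}^W_{2n}H^n(X) = F^n{\Gr}^W_{2n}H^n(X) = {\Gr}^W_{2n}F^n$. Moreover, Theorem \ref{thm: VT I} also forces ${\Gr}^W_{n+m}F^n=0$ for $n(r-1)<m<n(r)$, so that $W_{2n-1}F^n = W_{n+n(r-1)}F^n$. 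Combining these two observations,
\begin{equation*}
{\Gr}^W_{2n}H^n(X) \;=\; W_{n+n(r)}F^n/W_{n+n(r-1)}F^n,
\end{equation*}
which by Theorem \ref{thm: GrWFn} is identified via the residue maps with $\bigoplus_{F\in\mathcal{C}(r)}H^0(K_{{\DFsbar}})$ for an arbitrary top-dimensional $\sigma\in\Sigma_F$.

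The next step is to identify $\mathcal{C}(r)$ with the set of $0$-dimensional cusps of $X^{bb}$. The Siegel domain fibration \eqref{eqn: Siegel domain} gives the complex-dimension count
\begin{equation*}
n \;=\; \dim_{{\C}}{\D} \;=\; \dim_{{\C}} F + \tfrac{1}{2}\dim_{\R}V(F) + \dim_{\R} U(F),
\end{equation*}
so $\dim U(F)=n(r)=n$ forces both $\dim F=0$ and $V(F)=0$. Conversely, the assumption $n(r)=n$ is precisely the tube-domain condition, which is equivalent to every $0$-dimensional cusp having $V(F)=0$, hence $\dim U(F)=n$. Thus $\mathcal{C}(r)$ corresponds bijectively to the ${\G}$-equivalence classes of $0$-dimensional cusps of ${\D}$, i.e.\ to the $0$-dimensional cusps of $X^{bb}$.

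Finally, for $F\in\mathcal{C}(r)$ and $\sigma$ top-dimensional in $\Sigma_F$, all the relevant spaces collapse: $\mathcal{V}_F=F$ is a point, so $Y_F$ is a point, and $\dim\sigma=\dim U(F)$ makes $\mathbb{O}(\sigma)$ a point; since ${\DFs}$ is a principal $\mathbb{O}(\sigma)$-bundle over $Y_F$, the closure ${\DFsbar}$ is a single reduced point and $H^0(K_{{\DFsbar}})\simeq{\C}$. Summing over $\mathcal{C}(r)$ then yields the asserted dimension formula. The one step that requires some care is the first paragraph, where one must simultaneously use Theorem \ref{thm: VT I} to kill the intermediate weight graded pieces and invoke Tate purity of the top weight to collapse ${\Gr}^W_{2n}H^n(X)$ onto ${\Gr}^W_{2n}F^n$; after that, the argument is a direct application of Theorem \ref{thm: GrWFn} together with a pointwise dimension count.
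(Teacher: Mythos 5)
Your proof follows the same route as the paper's and is essentially correct; you spell out the preliminary reductions more explicitly than the paper does (concentration of ${\Gr}^W_{2n}H^n(X)$ in bidegree $(n,n)$ via Theorem~\ref{thm: VT I}, vanishing of the intermediate graded pieces, and verification of condition~\eqref{eqn: n(i) condition}). There is, however, one small gap in the identification of $\mathcal{C}(r)$ with the set of $0$-dimensional cusps of $X^{bb}$. You assert that $n(r)=n$ is equivalent to \emph{every} $0$-dimensional cusp having $V(F)=0$, but as defined in \S\ref{sec: VT I} the tube-domain condition is only the \emph{existence} of one $0$-dimensional cusp $F$ with ${\D}(F)=U(F)_{\C}$. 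The Siegel-domain dimension count $n=\dim_{\C}F+\tfrac12\dim_{\R}V(F)+\dim_{\R}U(F)$ then gives the inclusion of $\mathcal{C}(r)$ into the set of $0$-dimensional cusps, but not the reverse: a priori there could be a $0$-dimensional cusp $F'$ with $V(F')\neq 0$ and hence $\dim U(F')<n$. The paper closes this by observing that all $0$-dimensional boundary components of ${\D}$ are products of $0$-dimensional boundary components of the irreducible factors, and so are mutually equivalent under $G={\rm Aut}({\D})^{\circ}$; therefore $\dim U(F)$ is constant over them and equals $n$. Adding this (or an equivalent) transitivity observation completes your argument.
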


\begin{proof} 
Since ${\Gr}^{W}_{2n}H^n(X)={\Gr}^{W}_{2n}F^nH^n(X)$ and 
$Y_{F}=X_{F}$ are points for $F\in \mathcal{C}(r)$, we have 
$\dim {\Gr}^{W}_{2n}H^n(X) = |\mathcal{C}(r)|$ 
by Theorem \ref{thm: GrWFn}. 
Since $0$-dimensional boundary components of ${\D}$ are 
equivalent to each other under ${\rm Aut}({\D})^{\circ}$, 
we have $\dim U(F)=n$ for any $0$-dimensional cusp $F$. 
Hence $\mathcal{C}(r)$ is the set of $0$-dimensional cusps of $X^{bb}$. 
\end{proof}

Let 
\begin{equation*}
\Phi^{(i)}_{0} =(\Phi_{F})_{F} : M_{can}^{(i)} \to \bigoplus_{F\in \mathcal{C}(i)}S\!_{cat}(F) 
\end{equation*}
be the direct sum of the cuspidal Siegel operators to all cusps in $\mathcal{C}(i)$. 
Combining Theorem \ref{thm: GrWFn} with the results of \S \ref{sec: weight Siegel}, 
we obtain the following. 

\begin{theorem}\label{thm: CD}
Let $1\leq i \leq r$ and assume that the condition \eqref{eqn: n(i) condition} holds. 
Then we have the following commutative diagram: 
\begin{equation}\label{eqn: cd Gr}
\xymatrix{
W_{n+n(i)}F^n/W_{n+n(i-1)}F^n \ar[r]^-{{\res}} & \bigoplus_{F\in \mathcal{C}(i)} H^0(K_{\overline{Y}_{F}})^{{\GF}}  \\ 
{\Mi}/M_{can}^{(i-1)} \ar[u]^{\pi^{\ast}}  \ar[r]^{\Phi^{(i)}_{0}} & 
\bigoplus_{F\in \mathcal{C}(i)}S\!_{cat}(F) \ar[u]_{(\pi_{F}^{\ast})_{F}} 
}
\end{equation}
where all maps are isomorphisms. 
\end{theorem}

\begin{proof}
By Proposition \ref{prop: Shioda monodromy}, we may replace $H^0(K_{{\DFsbar}})$ 
in \eqref{eqn: residue=Siegel} with $H^0(K_{\overline{Y}_{F}})^{{\GF}}$. 
Then we take the direct sum of \eqref{eqn: residue=Siegel} over all $F\in \mathcal{C}(i)$ 
and divide ${\Mi}\stackrel{\simeq}{\to} W_{n+n(i)}F^n$ by 
$M_{can}^{(i-1)}\stackrel{\simeq}{\to} W_{n+n(i-1)}F^n$. 
This gives rise to the commutative diagram 
\begin{equation}\label{eqn: cd Gr pre}
\xymatrix{
W_{n+n(i)}F^n/W_{n+n(i-1)}F^n \ar[r]^-{{\res}}  & \bigoplus_{F\in \mathcal{C}(i)} H^0(K_{\overline{Y}_{F}})^{{\GF}}  \\ 
{\Mi}/M_{can}^{(i-1)} \ar[u]^{\pi^{\ast}}_{\simeq} \ar@{^{(}-_>}[r]^{\Phi^{(i)}_{0}} 
& \bigoplus_{F\in \mathcal{C}(i)}S\!_{cat}(F) \ar@{^{(}-_>}[u]_{(\pi_{F}^{\ast})_{F}}  
}
\end{equation}
By Theorem \ref{thm: GrWFn}, we find that the upper map ${\res}$ is an isomorphism. 
This implies that $(\pi_{F}^{\ast})_{F} \circ \Phi^{(i)}_{0}$ is an isomorphism. 
Since both $(\pi_{F}^{\ast})_{F}$ and $\Phi^{(i)}_{0}$ are injective, 
we see that they are furthermore isomorphisms. 
\end{proof}

\begin{corollary}\label{cor: Siegel surjective}
If the condition \eqref{eqn: n(i) condition} holds, then $\Phi^{(i)}_{0}$ is surjective. 
\end{corollary}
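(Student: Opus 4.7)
The plan is to deduce Corollary \ref{cor: Siegel surjective} directly from Theorem \ref{thm: CD}, which does essentially all the work. First I would observe that $\Phi^{(i)}_{0}$ vanishes on the subspace $M_{can}^{(i-1)}\subset M_{can}^{(i)}$: by definition, a form in $M_{can}^{(i-1)}$ vanishes at every cusp $F$ with $\dim U(F)>n(i-1)$, and since $n(i)>n(i-1)$, this in particular forces vanishing at every $F\in\mathcal{C}(i)$. Consequently $\Phi^{(i)}_{0}$ factors through the quotient $M_{can}^{(i)}/M_{can}^{(i-1)}$, yielding the induced map that appears as the bottom arrow of the diagram \eqref{eqn: cd Gr}.

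Next I would invoke Theorem \ref{thm: CD}: under the hypothesis \eqref{eqn: n(i) condition}, the induced map
\[
\bar{\Phi}^{(i)}_{0}\colon M_{can}^{(i)}/M_{can}^{(i-1)} \longrightarrow \bigoplus_{F\in\mathcal{C}(i)}S\!_{cat}(F)
\]
is an isomorphism, hence in particular surjective. Composing with the quotient map $M_{can}^{(i)}\twoheadrightarrow M_{can}^{(i)}/M_{can}^{(i-1)}$ shows that $\Phi^{(i)}_{0}$ itself is surjective.

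There is no real obstacle at the level of the corollary itself; the substantive content has already been packaged into Theorem \ref{thm: CD}. The one point worth emphasizing in the write-up is the well-definedness of the induced map on the quotient, so that one is entitled to extract surjectivity of $\Phi^{(i)}_{0}$ (rather than merely of its restriction modulo $M_{can}^{(i-1)}$) from the bottom horizontal isomorphism of \eqref{eqn: cd Gr}. All the genuine difficulty—identifying the weight filtration with the corank filtration via the Mumford isomorphism \eqref{eqn: L = K+D} and the Shioda-type maps $\pi_F^{\ast}$, and computing the graded quotient via the fundamental class of the $\Delta$-complex $K(\Sigma_F)/\Gamma_F$—has already been dispatched in Theorems \ref{thm: weight=corank} and \ref{thm: GrWFn}.
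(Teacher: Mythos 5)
Your proof is correct and is exactly the argument the paper leaves implicit: Corollary \ref{cor: Siegel surjective} is read off directly from the bottom isomorphism in \eqref{eqn: cd Gr} of Theorem \ref{thm: CD}, after noting (as \eqref{eqn: corank filtration successive} already records) that $M_{can}^{(i-1)}=\ker(\Phi^{(i)}_0)$, so $\Phi^{(i)}_0$ descends to the quotient. Nothing further to add.
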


Finally, before proceeding to the proof of Theorem \ref{thm: GrWFn}, 
we shall show that the condition \eqref{eqn: n(i) condition} is often satisfied for $i>1$. 

\begin{lemma}\label{lem: n(i) condition D irreducible}
Suppose that the algebraic group $\mathbb{G}$ is ${\Q}$-simple. 
Then the condition \eqref{eqn: n(i) condition} is satisfied for every $i>1$. 
\end{lemma}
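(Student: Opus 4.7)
The plan is a dimension count that exploits the Jordan-algebra structure of the cone $C(F_r)$. Using the structure theory of $\Q$-simple semi-simple groups, I would first write $\mathbb{G}$ up to isogeny as a Weil restriction $\mathrm{Res}_{K/\Q}\mathbb{G}'$, with $\mathbb{G}'$ absolutely almost simple over a number field $K$. Under this presentation the $\Q$-rank of $\mathbb{G}$ equals the $K$-rank of $\mathbb{G}'$, namely $r$, and the $\Q$-flag of reference cusps $F_1 \succ \cdots \succ F_r$ matches a $K$-flag of standard maximal parabolics of $\mathbb{G}'$ (\cite{BB} Theorem 3.8).

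Next, by the Koecher--Vinberg theory applied to the $K$-rational cone (cf.~\cite{AMRT} \S III.4), $C(F_r) \subset U(F_r)$ descends to the cone of squares in a simple Euclidean Jordan algebra $J$ over $K$ of rank $r$, and the subspaces $U(F_i)$ descend to the rank-$i$ Peirce subalgebras $J^{(i)}$ cut out by a $K$-rational Jordan frame. The Peirce decomposition of $J^{(i)}$ then gives
\[
\dim_K J^{(i)} \: = \: i + d \binom{i}{2},
\]
where $d$ is the common off-diagonal Peirce multiplicity of $J$. Restriction of scalars yields $n(i) = \delta \cdot (i + d\binom{i}{2})$ where $\delta := [K:\Q]$, so that
\[
n(i) - n(i-1) \: = \: \delta \bigl( 1 + d(i-1) \bigr).
\]

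To conclude $n(i) - n(i-1) \geq 2$ for $i \geq 2$ I would split on $\delta$. If $\delta \geq 2$, the inequality is immediate. If $\delta = 1$, so that $\mathbb{G}$ is already absolutely almost simple over $\Q$, the classification of simple Euclidean Jordan algebras of rank $r \geq 2$ forces $d \geq 1$ (one has $d \in \{1,2,4,8\}$ in the types $\mathrm{Sym}_r(\R)$, $\mathrm{Herm}_r(\C)$, $\mathrm{Herm}_r(\HH)$, $\mathrm{Herm}_3(\mathbb{O})$, and $d = \dim_\R J - 2 \geq 1$ in the rank-$2$ Lorentzian case corresponding to $\mathrm{SO}(2,n)$ with $n\geq 3$), so $n(i)-n(i-1) \geq 1 + (i-1) \geq 2$. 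The main point that requires care is the $K$-rational descent of the Jordan-algebra structure, but this is built into the Baily--Borel/AMRT picture once $\mathbb{G}$ is $\Q$-simple; the rest is a direct dimension computation with the Peirce decomposition.
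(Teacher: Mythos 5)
Your approach — realize $C(F_r)$ as the cone of a Jordan algebra, and compute $n(i)$ from a Peirce decomposition along a $K$-rational Jordan frame — is the Jordan-algebra avatar of what the paper does, but the precise formula $\dim_K J^{(i)} = i + d\binom{i}{2}$ has a genuine gap: it implicitly assumes the diagonal Peirce spaces $J_{jj}$ are one-dimensional over $K$, which fails for $\Q$-forms built from division algebras of degree $>1$. For instance, take $K=\Q$ and $\mathbb{G} = \mathrm{SU}(D^n, h)$ with $D$ a degree-$m$ division algebra over an imaginary quadratic field carrying a unitary involution and $h$ a Hermitian form of Witt index $r$: the $\Q$-rational Jordan algebra is $\mathrm{Herm}_r(D)$, where $\dim_\Q J_{jj} = m^2$ and $\dim_\Q J_{jk} = 2m^2$, giving $n(i) = m^2 i^2$, which is \emph{not} of the form $i + d\binom{i}{2}$ for any fixed $d$ once $m>1$. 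The same example also shows that the $K$-rank $r$ of the rational Jordan frame is generally strictly less than the $\R$-rank of $J_\R$, so the over-$\R$ classification of simple Euclidean Jordan algebras does not directly give the value (or positivity) of the $K$-rational off-diagonal multiplicity $d$ that your case split needs.

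The conclusion you want does survive a correction: writing $n(i) - n(i-1) = \delta\bigl(e + (i-1)d\bigr)$ with $e = \dim_K J_{jj} \geq 1$ and $d = \dim_K J_{jk}$, one only needs $d \geq 1$ for $i \geq 2$, and this holds because a $K$-simple formally real Jordan algebra of $K$-rank $\geq 2$ must have nonzero off-diagonal Peirce spaces (else it decomposes). But that positivity is exactly the fact \cite{AMRT} Corollary III.2.12 packages in root-theoretic form, and the paper uses it directly: the $\Q$-roots occurring in $U(F_i)$ exceed those of $U(F_{i-1})$ by the $i$ distinct roots $(\beta_j + \beta_i)/2$, $1 \leq j \leq i$, each contributing at least one dimension, so $n(i)-n(i-1) \geq i \geq 2$. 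This sidesteps the Jordan-algebra classification and the diagonal-multiplicity subtlety entirely. So your route is morally the same count but carries avoidable classification baggage and, as written, a formula that is false for inner forms.
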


\begin{proof}
We use the notations in \cite{AMRT} \S III.2.5 and \S III.3.5.  
We take normalized adjacent cusps 
$F_{S}\prec F_{S'}$ of corank $i$ and $i-1$ as in \cite{AMRT} Lemma III.3.13 with 
\begin{equation*}
S' = I_{\alpha_{1}} \sqcup \cdots \sqcup I_{\alpha_{i-1}}, \quad 
S = S' \sqcup I_{\alpha_{i}}. 
\end{equation*}
Let $\beta_{\alpha}$ be the ${\Q}$-root associated to $I_{\alpha}$ as in \cite{AMRT} \S III.2.5. 
Possible ${\R}$-roots (hence possible ${\Q}$-roots) of 
$U(F_{S})$ and $U(F_{S'})$ are described in \cite{AMRT} p.143. 
It is assured by \cite{AMRT} Corollary III.2.12 
%(especially the assertion that ``all $(\beta_{i}+\beta_{j})/2$ occur'') 
that the ${\Q}$-roots of $U(F_{S})$ that indeed occur are 
$(\beta_{j}+\beta_{k})/2$ with $1\leq j, k\leq i$,  
and similarly those of $U(F_{S'})$ are $(\beta_{j}+\beta_{k})/2$ with $1\leq j, k\leq i-1$. 
The difference of these two sets is 
\begin{equation*}
\{ (\beta_{j}+\beta_{i})/2 \, | \, 1\leq j \leq i \}. 
\end{equation*}
It follows that 
$\dim (U(F_{S})/U(F_{S'})) \geq  i > 1$. 
This proves our assertion. 
\end{proof}

\subsection{Proof of Theorem \ref{thm: GrWFn}}\label{ssec: proof GrW}

In this subsection we prove Theorem \ref{thm: GrWFn}. 
By \eqref{eqn: GrWF kernel}, ${\Gr}^{W}_{n+n(i)}F^n$ is the kernel of the Gysin map with sign 
\begin{equation*}
H^0(K_{D(n(i))}) \to H^{n-n(i)+1,1}(D(n(i)-1)). 
\end{equation*}
By \eqref{eqn: D(m) toroidal}, the source is written as 
\begin{equation*}
H^0(K_{D(n(i))}) \: = \: 
\bigoplus_{\substack{(F,\sigma) \\ \dim \sigma =n(i)}} H^0(K_{{\DFsbar}}), 
\end{equation*}
where $(F, \sigma)$ runs over ${\G}$-equivalence classes. 
For the same reason as in the proof of Lemma \ref{lem: residue only corank i}, 
only those pairs $(F, \sigma)$ with $F\in \mathcal{C}(i)$ and 
$\sigma$ top-dimensional in $\Sigma_{F}$ contribute here. 
By Lemma \ref{lem: D(m) toroidal}, for such a pair $(F, \sigma)$,  
the connected components of $D(n(i)-1)$ that appear in the target of 
the Gysin map from ${\DFsbar}$ correspond to the codimension $1$ faces of $\sigma$. 
By \eqref{eqn: codim 1 F-cone}, these faces are still $F$-cones. 
(This is a crucial point of the proof.) 
Thus ${\Gr}^{W}_{n+n(i)}F^n$ is the kernel of the Gysin map with sign 
\begin{equation}\label{eqn: signed Gysin toroidal}
\bigoplus_{F\in \mathcal{C}(i)} 
\left( \bigoplus_{\substack{[\sigma]\in \Sigma_{F}/{\GF} \\ \dim \sigma =n(i)}} H^0(K_{{\DFsbar}}) \to 
\bigoplus_{\substack{[\tau]\in \Sigma_{F}/{\GF} \\ \dim \tau =n(i)-1}} H^{n-n(i)+1,1}(\overline{D(F, \tau)}) \right), 
\end{equation}
which is the direct sum of its $F$-components over all $F\in \mathcal{C}(i)$. 
Here note that for each $F$, ${\G}$-equivalence of $F$-cones reduces to ${\GF}$-equivalence. 
As explained in \S \ref{ssec: WFkHk}, the projection 
${\Gr}^{W}_{n+n(i)}F^n \to H^0(K_{{\DFsbar}})$ 
is given by the residue map at ${\DFsbar}$. 

Let $F\in \mathcal{C}(i)$. 
We want to calculate the kernel of the $F$-component of \eqref{eqn: signed Gysin toroidal}. 
The first step is the following normalization. 

\begin{lemma}\label{lem: normalization by YF}
Let $\sigma$ be a top-dimensional cone in $\Sigma_{F}$ and $\tau$ be a codimension $1$ face of $\sigma$. 
The cones $\sigma$ and $\tau$ canonically determine isomorphisms 
$H^0(K_{\overline{Y}_{F}}) \to H^0(K_{{\DFsbar}})$ and 
$H^0(K_{\overline{Y}_F}) \to H^{n-n(i)+1,1}(\overline{D(F, \tau)})$ 
respectively, via which 
the Gysin map $H^0(K_{{\DFsbar}}) \to H^{n-n(i)+1,1}(\overline{D(F, \tau)})$ 
is identified with the identity of $H^0(K_{\overline{Y}_{F}})$. 
\end{lemma}

\begin{proof}
As explained in \S \ref{ssec: GF monodromy}, the cone $\sigma$ determines the isomorphism 
$p_{\sigma} \colon {\DFs} \to Y_{F}$. 
This induces the isomorphism 
$H^0(K_{\overline{Y}_{F}}) \to H^0(K_{{\DFsbar}})$ 
as desired. 
Next, as in \eqref{eqn: P1 bundle},  
the map $p_{\tau}$ makes 
\begin{equation*}
\overline{D(F, \tau)}\cap \pi^{-1}(X_{F}) = D(F, \tau) \sqcup {\DFs} \sqcup D(F, \sigma^{\dag}) 
\end{equation*}
a ${\proj}^1$-bundle over $Y_{F}$ with 
$D(F, \sigma)$ the $0$-section and $D(F, \sigma^{\dag})$ the $\infty$-section, 
where $\sigma^{\dag}\in \Sigma_{F}$ is the top-dimensional cone containing $\tau$ and opposite to $\sigma$. 
We take a birational map 
$\overline{D(F, \tau)} \dashrightarrow \overline{Y}_F\times {\proj}^1$ 
respecting this ${\proj}^1$-bundle structure.  
By the structure of the cohomology of a ${\proj}^1$-bundle, 
the Gysin map $H^0(K_{\overline{Y}_F}) \to H^{n-n(i)+1,1}(\overline{Y}_F\times {\proj}^1)$ for 
$\overline{Y}_F \hookrightarrow {\overline{Y}_F}\times {\proj}^1$ 
is an isomorphism. 
By the birational invariance of the relevant Hodge components, 
we have the commutative diagram 
\begin{equation*}
\xymatrix@C+1pc{
H^0(K_{\overline{Y}_F}) \ar[d]_{p_{\sigma}} \ar[r]^-{\textrm{Gysin}} & 
H^{n-n(i)+1,1}({\overline{Y}_F}\times {\proj}^1) \ar[d]  \\ 
H^0(K_{{\DFsbar}}) \ar[r]^-{\textrm{Gysin}} & H^{n-n(i)+1,1}(\overline{D(F, \tau)})
}
\end{equation*}
where all maps are isomorphisms. 
This gives the isomorphism 
$H^0(K_{\overline{Y}_F}) \to H^{n-n(i)+1,1}(\overline{D(F, \tau)})$ 
as desired. 
%Since this map is complex conjugate of the dual of the restriction map 
%$H^{0}(\Omega_{\overline{D(F, \tau)}}^{n-n(i)}) \stackrel{\simeq}{\to} H^0(K_{\overline{Y}_{F}})$ 
%for a rational section of 
%$p_{\tau}\colon \overline{D(F, \tau)} \dashrightarrow \overline{Y}_{F}$, 
Clearly this depends only on $p_{\tau}$.  
\end{proof}

Let us rewrite the isomorphisms in Lemma \ref{lem: normalization by YF} as 
\begin{equation}\label{eqn: HKY sigma}
H^0(K_{\overline{Y}_{F}})\otimes {\C}\langle \sigma \rangle \simeq H^0(K_{{\DFsbar}}), 
\end{equation} 
\begin{equation}\label{eqn: Gysin P1 bundle}
H^0(K_{\overline{Y}_F}) \otimes {\C} \langle \tau \rangle \simeq H^{n-n(i)+1,1}(\overline{D(F, \tau)}), 
\end{equation}
where $\langle \sigma \rangle$ and $\langle \tau \rangle$ are formal symbols 
for keeping track of $\sigma$, $\tau$ respectively. 
Then the Gysin map $H^0(K_{{\DFsbar}}) \to H^{n-n(i)+1,1}(\overline{D(F, \tau)})$ is identified with 
\begin{equation}\label{eqn: Gysin sigma tau} 
{\rm id} \otimes (\langle \sigma \rangle \mapsto \langle \tau \rangle ) \; : \; 
H^0(K_{\overline{Y}_F}) \otimes {\C}\langle \sigma \rangle \to H^0(K_{\overline{Y}_F}) \otimes {\C}\langle \tau \rangle. 
\end{equation}
Note that the labeling isomorphism \eqref{eqn: HKY sigma} 
depends on the cone $\sigma$, rather than its ${\GF}$-equivalence class. 
If we use another equivalent cone $\gamma \sigma$, $\gamma \in {\GF}$, 
the diagram \eqref{CD: gamma act on Dhat} shows that 
the $\sigma$-labeling and the $\gamma \sigma$-labeling 
differ by the $\gamma$-action on $H^0(K_{\overline{Y}_{F}})$. 
The same holds for the labeling \eqref{eqn: Gysin P1 bundle} for the codimension $1$ cone $\tau$. 

Let $K(\Sigma_{F})$ be the simplicial complex associated to the projectivization of $\Sigma_{F}$. 
By Lemma \ref{lem: D(m) toroidal}, the numbering of the irreducible components of ${\Xcpt}-X$ 
determines a numbering of the rays of each $F$-cone, 
or equivalently, the vertices of the associated simplex. 
This determines its orientation. 
(We take arbitrary numberings for the remaining simplices; they will be irrelevant in the following.) 
As explained in \S \ref{ssec: GF monodromy}, 
we can compare the orientation of top-dimensional cones. 
By Lemma \ref{lem: orientation}, whether two such cones have the same orientation depends only on their ${\GF}$-equivalence classes. 

%The next step is the main step of the proof of Theorem \ref{thm: GrWFn}. 

\begin{lemma}\label{lemma: chain map local}
Let $\sigma$ be a top-dimensional cone in $\Sigma_{F}$. 
Via \eqref{eqn: HKY sigma} and \eqref{eqn: Gysin P1 bundle}, 
the component of \eqref{eqn: signed Gysin toroidal} that starts from $H^{0}(K_{{\DFsbar}})$ 
is identified with  
\begin{equation*}
{\rm id} \otimes \partial \; : \: 
H^0(K_{\overline{Y}_{F}}) \otimes 
\left( {\C} \langle \sigma \rangle \to 
\bigoplus_{\tau \prec \sigma} {\C} \langle \tau \rangle \right), 
\end{equation*}
where $\tau$ ranges over all codimension $1$ faces of $\sigma$ and 
$\partial$ is the boundary map in the chain complex of $K(\Sigma_{F})$. 
\end{lemma}

\begin{proof}
Let $\sigma_{1}, \cdots, \sigma_{n(i)}$ be the rays of $\sigma$ (in this order). 
For $1\leq j \leq n(i)$, we denote by $\tau_{j}$ the codimension $1$ face of $\sigma$ spanned by 
$\{ \sigma_{1}, \cdots, \hat\sigma_{j}, \cdots, \sigma_{n(i)} \}$. 
By \eqref{eqn: Gysin sigma tau}, the Gysin map 
$H^0(K_{{\DFsbar}}) \to H^{n-n(i)+1,1}(\overline{D(F, \tau_{j})})$ 
is identified with 
\begin{equation*} 
H^0(K_{\overline{Y}_F}) \otimes {\C} \langle \sigma \rangle \to H^0(K_{\overline{Y}_F}) \otimes {\C} \langle \tau_{j} \rangle, \quad 
{\rm id} \otimes (\langle \sigma \rangle \mapsto \langle \tau_{j} \rangle ). 
\end{equation*}
By our choice of ordering of the vertices, its sign in \eqref{eqn: Gysin sign} is $(-1)^{j-1}$. 
Therefore the map \eqref{eqn: signed Gysin toroidal} on the component $H^0(K_{{\DFsbar}})$ is identified with 
\begin{equation*} 
H^0(K_{\overline{Y}_F}) \otimes {\C}\langle \sigma \rangle \to 
H^0(K_{\overline{Y}_F}) \otimes \bigoplus_{j=1}^{n(i)} {\C}\langle \tau_{j} \rangle, \quad 
{\rm id} \otimes \left( \langle \sigma \rangle \mapsto \sum_{j=1}^{n(i)}(-1)^{j-1}\langle \tau_{j} \rangle \right). 
\end{equation*}
The right side of $\otimes$ is nothing but the boundary map from $\sigma$ 
in the chain complex of $K(\Sigma_{F})$ (see \cite{Hatcher} \S 2.1). 
\end{proof}

The identification in Lemma \ref{lemma: chain map local} depends on the cone $\sigma$, 
rather than its ${\GF}$-equivalence class. 
We pass to a canonical description as follows. 

\begin{lemma}\label{lemma: chain map total}
The $F$-component of \eqref{eqn: signed Gysin toroidal} is canonically identified with 
the ${\GF}$-invariant part of  
\begin{equation}\label{eqn: complex GF invariant}
{\rm id} \otimes \partial \; : \: 
H^0(K_{\overline{Y}_{F}}) \otimes \left( \prod_{\sigma } {\C}\langle \sigma \rangle \to 
\prod_{\tau} {\C} \langle \tau \rangle \right), 
\end{equation}
where $\sigma$ (resp.~$\tau$) ranges over all top-dimensional (resp.~codimension $1$) cones in $\Sigma_{F}$, 
and $\partial$ is the boundary map in the chain complex of $K(\Sigma_{F})$. 
\end{lemma}

Note that although we are considering infinite chains here, 
the boundary map $\partial$ is well-defined because 
the simplices in question are locally finite. 

\begin{proof}
Let $[\sigma]$ be a top-dimensional cone in $\Sigma_{F}/{\GF}$ 
and $\omega$ be an element of $H^0(K_{{\DFsbar}})$. 
For each cone $\sigma\in \Sigma_{F}$ representing $[\sigma]$, 
we denote by $\omega_{\sigma}\in H^0(K_{\overline{Y}_{F}})$ the image of $\omega$ under the map \eqref{eqn: HKY sigma}. 
By taking all such maps simultaneously, we have the canonical isomorphism 
\begin{equation*}\label{eqn: normalization all sigma}
H^0(K_{{\DFsbar}}) \simeq 
\left( \prod_{\sigma} H^0(K_{\overline{Y}_{F}}) \otimes {\C} \langle \sigma \rangle \right)^{{\GF}}, \quad 
\omega \mapsto (\omega_{\sigma})_{\sigma}, 
\end{equation*}
where $\sigma$ ranges over all cones in $\Sigma_{F}$ representing $[\sigma]$. 
Similarly, for a codimension $1$ cone $[\tau]$ in $\Sigma_{F}/{\GF}$, 
we have a canonical isomorphism 
\begin{equation*}\label{eqn: normalization all tau}
H^{n-n(i)+1,1}(\overline{D(F, \tau)}) \simeq 
\left( \prod_{\tau} H^0(K_{\overline{Y}_{F}}) \otimes {\C} \langle \tau \rangle \right)^{{\GF}} 
\end{equation*}
by taking \eqref{eqn: Gysin P1 bundle} for all cones $\tau$ in $\Sigma_{F}$ representing $[\tau]$. 
Then our assertion follows from Lemma \ref{lemma: chain map local}. 
\end{proof}

In what follows, we use the notation ${\rm id}\otimes \partial$ for the map \eqref{eqn: complex GF invariant}. 
By Lemma \ref{lemma: chain map total}, the $F$-component of 
${\rm Gr}^{W}_{n+n(i)}F^n$ is identified with $\ker({\rm id}\otimes \partial)^{{\GF}}$. 
We express an element of 
$\prod_{\sigma } H^0(K_{\overline{Y}_{F}}) \otimes {\C}\langle \sigma \rangle$ 
as $(\omega_{\sigma})_{\sigma}$ 
where $\sigma$ ranges over all top-dimensional cones in $\Sigma_{F}$ and 
$\omega_{\sigma}\in H^{0}(K_{\overline{Y}_{F}})$ for each $\sigma$. 
The final step of the proof of Theorem \ref{thm: GrWFn} is the following. 

\begin{lemma}\label{lem: GF invariant part}
Let $\sigma_{0}\in \Sigma_{F}$ be an arbitrary top-dimensional cone. 
Projecting $(\omega_{\sigma})_{\sigma}$ to $\omega_{\sigma_{0}}$ defines an isomorphism 
\begin{equation*}
\ker({\rm id}\otimes \partial)^{{\GF}} \simeq H^{0}(K_{\overline{Y}_{F}})^{{\GF}}. 
\end{equation*}
\end{lemma}

\begin{proof}
For a top-dimensional cone $\sigma\in \Sigma_{F}$, 
we define ${\rm sgn}(\sigma)\in \{ \pm 1 \}$ according to 
whether $\sigma$ has the same orientation as $\sigma_{0}$ or not. 
An element $(\omega_{\sigma})_{\sigma}$ of 
$\prod_{\sigma}H^0(K_{\overline{Y}_{F}})\otimes {\C}\langle \sigma \rangle$ 
is annihilated by ${\rm id}\otimes \partial$ if and only if 
we have $\omega_{\sigma_{+}}=\pm \omega_{\sigma_{-}}$ for any two adjacent cones $\sigma_{+}, \sigma_{-}$, 
where the sign is determined according to whether 
$\sigma_{+}$ and $\sigma_{-}$ have the same orientation. 
Therefore we have the isomorphism 
\begin{equation}\label{eqn: Ker id partial}
H^0(K_{\overline{Y}_{F}}) \to \ker({\rm id}\otimes \partial), \quad 
\omega \mapsto ({\rm sgn}(\sigma) \omega)_{\sigma}. 
\end{equation}
The group ${\GF}$ acts on $\ker({\rm id}\otimes \partial)$ by 
\begin{equation*}
\gamma : ({\rm sgn}(\sigma) \omega)_{\sigma} \mapsto 
({\rm sgn}(\gamma^{-1}\sigma) \gamma^{-1}\omega)_{\sigma} = 
({\rm sgn}(\sigma) \gamma^{-1}\omega)_{\sigma}, 
\end{equation*}
where the equality 
${\rm sgn}(\gamma^{-1}\sigma)={\rm sgn}(\sigma)$ 
follows from Lemma \ref{lem: orientation}. 
Hence, via the isomorphism \eqref{eqn: Ker id partial}, 
the ${\GF}$-action on $\ker({\rm id}\otimes \partial)$ 
is identified with the natural ${\GF}$-action on $H^0(K_{\overline{Y}_{F}})$. 
This proves our assertion. 
\end{proof}

This shows that the composition 
\begin{equation*}
{\rm Gr}^{W}_{n+n(i)}F^{n} \stackrel{{\res}_{F,\sigma}}{\longrightarrow} 
H^{0}(K_{\overline{D(F, \sigma)}}) \stackrel{p_{\sigma}}{\longrightarrow} 
H^0(K_{\overline{Y}_{F}}) 
\end{equation*}
defines an isomorphism from the $F$-component of ${\rm Gr}^{W}_{n+n(i)}F^{n}$ 
to $H^0(K_{\overline{Y}_{F}})^{{\GF}}$.    
The proof of Theorem \ref{thm: GrWFn} is now completed.

\begin{remark}
The isomorphism \eqref{eqn: Ker id partial} can be regarded as the description of 
the top-dimensional (Borel-Moore) homology of $K(\Sigma_{F})$ with coefficients in $H^{0}(K_{\overline{Y}_{F}})$ 
as the tensor product of $H^{0}(K_{\overline{Y}_{F}})$ with the fundamental class of $K(\Sigma_{F})$. 
Taking the ${\GF}$-invariant part gives the description of the corresponding homology with local coefficients of 
the quotient complex $K(\Sigma_{F})/{\GF}$ 
as the tensor product of $H^{0}(K_{\overline{Y}_{F}})^{{\GF}}$ with the fundamental class of $K(\Sigma_{F})/{\GF}$. 
This is a geometric interpretation behind the above argument. 
\end{remark}

\subsection{On the case $n(1)=1$}\label{ssec: n(1)=1}

In this subsection we look at the case $i=1$ with $n(1)=1$. 
This is a typical case where \eqref{eqn: n(i) condition} does not hold; 
when $\mathbb{G}$ is ${\Q}$-simple, this is the only exceptional case by Lemma \ref{lem: n(i) condition D irreducible}.  
%For example, this holds for all modular varieties in \S \ref{ssec: Siegel} -- \S \ref{ssec: unitary}. 
In this case, $\Sigma_{F}$ consists of only one ray $\sigma$ for $F\in \mathcal{C}(1)$. 
So the toroidal compactification over $F$ is unique, and  
we can identify the unique boundary divisor ${\DFs}$ over $X_{F}$ with $Y_{F}$. 
The group ${\GF}$ is trivial. 
Then we have the following description of ${\Gr}^{W}_{n+1}F^n$. 

\begin{proposition}\label{prop: n(1)=1}
When $n(1)=1$, we have the exact sequence 
\begin{equation*}\label{eqn: exact seq n(1)=1}
0 \to {\Gr}^{W}_{n+1}F^n \stackrel{{\res}}{\to} \bigoplus_{F\in \mathcal{C}(1)} H^0(K_{\overline{Y}_{F}}) \to 
H^{n,1}({\Xcpt}) \to F^nW_{n+1}H^{n+1}(X) \to 0, 
\end{equation*}
where the middle map is the Gysin map.  
\end{proposition}

\begin{proof}
The first part of the proof of Theorem \ref{thm: GrWFn} is still valid and shows 
\begin{equation*}
{\Gr}^{W}_{n+1}F^{n}  \: \simeq \: 
\ker \left( \bigoplus_{F\in \mathcal{C}(1)} H^0(K_{\overline{Y}_{F}}) \stackrel{d_{1}}{\longrightarrow} H^{n,1}({\Xcpt}) \right). 
\end{equation*}
Here $-d_{1}$ is the Gysin map. (The sign $(-1)^{j-1}$ in \eqref{eqn: Gysin sign} is $1$.) 
The cokernel of $d_1$ is  
\begin{equation*}
F^nE_{2}^{0,n+1} = F^n{\Gr}^{W}_{n+1}H^{n+1}(X) = F^nW_{n+1}H^{n+1}(X). 
\end{equation*}
This proves our assertion. 
\end{proof}

From this we obtain the following description of the cokernel of $\Phi_{0}^{(1)}$. 

\begin{proposition}\label{prop: cokernel Siegel n(1)=1}
When $n(1)=1$, we have an injective map 
\begin{equation}\label{eqn: Phi1 Omegan-1}
\coker(\Phi_{0}^{(1)}) \hookrightarrow \ker (H^{n,1}({\Xcpt}) \to F^nW_{n+1}H^{n+1}(X)). 
\end{equation}
\end{proposition}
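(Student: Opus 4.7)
The plan is to combine the commutative square of Theorem~\ref{thm: weight=corank} (specialized to $i=1$) with the four-term exact sequence of Proposition~\ref{prop: n(1)=1}, exploiting the injectivity of the Shioda-type map $(\pi_{F}^{\ast})_{F}$.

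First, I would take the direct sum over $F\in \mathcal{C}(1)$ of the diagrams \eqref{eqn: residue=Siegel} supplied by Theorem~\ref{thm: weight=corank}, obtaining
\begin{equation*}
\xymatrix@C+1pc{
W_{n+1}F^n \ar[r]^-{{\res}} & \bigoplus_{F\in \mathcal{C}(1)} H^0(K_{\overline{D(F)}}) \\
M_{can}^{(1)} \ar[u]^{\pi^{\ast}}_{\simeq} \ar[r]^-{\Phi^{(1)}_{0}} & \bigoplus_{F\in \mathcal{C}(1)} S\!_{cat}(F) \ar@{^{(}-_>}[u]_{(\pi_{F}^{\ast})_{F}}
}
\end{equation*}
in which the right vertical map is injective. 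Recall that for $F\in \mathcal{C}(1)$ the unique top-dimensional cone of $\Sigma_{F}$ is a single ray, so $\DFsbar = \overline{D(F)}$. By Lemma~\ref{lem: residue only corank i} applied with $i=1$, the kernel of the top horizontal $\res$ is exactly $W_{n}F^{n}$, hence $\res$ induces an embedding of ${\Gr}^{W}_{n+1}F^{n}$ into the direct sum on the right, and commutativity of the square yields $(\pi_{F}^{\ast})_{F}\circ \Phi^{(1)}_{0}(M_{can}^{(1)}) = \res(W_{n+1}F^{n})$, which coincides with this embedded copy of ${\Gr}^{W}_{n+1}F^{n}$.

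Next, since $(\pi_{F}^{\ast})_{F}$ is injective, a short diagram chase (an injection $A\hookrightarrow B$ always descends to an injection $A/\phi(C)\hookrightarrow B/\psi(C)$ when $\psi=(\pi_{F}^{\ast})_{F}\circ \phi$) delivers
\begin{equation*}
\coker(\Phi^{(1)}_{0}) \: \hookrightarrow \: \bigoplus_{F\in \mathcal{C}(1)} H^0(K_{\overline{D(F)}}) \, / \, {\Gr}^{W}_{n+1}F^{n}.
\end{equation*}
The exact sequence of Proposition~\ref{prop: n(1)=1} then identifies this quotient, via the Gysin map, with $\ker(H^{n,1}({\Xcpt})\to F^{n}W_{n+1}H^{n+1}(X))$. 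Composing the two maps produces the desired injection \eqref{eqn: Phi1 Omegan-1}.

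The argument is essentially formal once the two main inputs are in place, and I do not anticipate a real obstacle: the only point that requires care is verifying that the residue map $\res$ appearing in Theorem~\ref{thm: weight=corank} and the residue map $\res$ in Proposition~\ref{prop: n(1)=1} are literally the same, so that the embedded ${\Gr}^{W}_{n+1}F^{n}$ produced by the diagram is the same one killed in the four-term exact sequence. This compatibility is immediate from \eqref{eqn: WmFk residue kernel} together with Lemma~\ref{lem: residue only corank i}.
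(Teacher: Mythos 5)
Your proposal is correct and follows essentially the same route as the paper: both use the commutative square coming from Theorem~\ref{thm: weight=corank} (the paper uses the graded version \eqref{eqn: cd Gr pre} directly, while you start from the ungraded square and pass to quotients via the injectivity of $(\pi_F^{\ast})_F$, which amounts to the same diagram chase) and then conclude by identifying the cokernel of $\res$ with the kernel of the Gysin map via the exact sequence of Proposition~\ref{prop: n(1)=1}. The compatibility point you flag at the end is indeed the only thing to check, and your justification for it is the correct one.
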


\begin{proof}
The commutative diagram \eqref{eqn: cd Gr pre} is still valid in this case.  
This shows that the map $(\pi_{F}^{\ast})_{F\in \mathcal{C}(1)}$  
defines an injective map 
\begin{equation*}
\coker(\Phi_{0}^{(1)}) \hookrightarrow 
\coker \left( {\rm Gr}^{W}_{n+1}F^n \stackrel{{\res}}{\to} \bigoplus_{F\in \mathcal{C}(1)} H^0(K_{\overline{Y}_{F}}) \right).   
\end{equation*}
By Proposition \ref{prop: n(1)=1}, the right hand side is isomorphic to the kernel of 
$H^{n,1}({\Xcpt}) \to F^nW_{n+1}H^{n+1}(X)$ by the Gysin map. 
\end{proof} 

Proposition \ref{prop: cokernel Siegel n(1)=1} shows that the space 
\begin{equation*}
H^{n,1}({\Xcpt}) \simeq H^0({\Xcpt}, \Omega^{n-1})\simeq H^0(X, \Omega^{n-1}) 
\end{equation*}
provides an obstruction space for $\Phi_{0}^{(1)}$ to be surjective. 
Here the second isomorphism follows from the extension theorem of Pommerening \cite{Po}.

\subsection{More on Siegel operators}\label{ssec: more Siegel}

In this subsection we reformulate surjectivity of the total Siegel operator for $M_{can}$, rather than ${\Mi}$. 
We assume that the algebraic group $\mathbb{G}$ is ${\Q}$-simple. 
We denote by 
\begin{equation*}
\Phi^{(i)} = (\Phi_{F})_{F} : M_{can} \to \bigoplus_{F\in \mathcal{C}(i)}M_{cat}(F) 
\end{equation*}
the direct sum of the Siegel operators from $M_{can}$ to all cusps of corank $i$. 
We define a subspace 
$V_{i}$ of $\bigoplus_{F\in \mathcal{C}(i)}M_{cat}(F)$ 
as follows. 
First we put 
\begin{equation*}
M_{cat}(F)^{+} = H^0(\overline{X}_{F}, {\LL}|_{\overline{X}_{F}}) 
\end{equation*}
for each cusp $F$. 
This is a subspace of $M_{cat}(F)$ consisting of 
modular forms on $F$ whose values at two boundary points of $X_{F}^{bb}$ 
identified at $\overline{X}_{F}$ coincide. 
In particular, $S\!_{cat}(F)$ is contained in $M_{cat}(F)^{+}$. 
Clearly the image of the Siegel operator $\Phi_{F}$ at $F$ is contained in $M_{cat}(F)^{+}$. 
If $F'\prec F$, then $\overline{X}_{F'}\subset \overline{X}_{F}$, 
so the restriction map 
\begin{equation*}
M_{cat}(F)^{+} \to M_{cat}(F')^{+}, \quad f_{F}\mapsto f_{F}|_{F'}, 
\end{equation*}
is well-defined. 
%This is a Siegel operator from $F$ to $F'$. 
Now we define $V_{i}$ as a subspace of $\bigoplus_{F\in \mathcal{C}(i)}M_{cat}(F)^{+}$ by    
\begin{equation*} 
V_i \: = \: \{ \, (f_{F})_{F\in \mathcal{C}(i)} \; | \; (f_{F_{1}})|_{F'} = (f_{F_{2}})|_{F'} \; \textrm{if} \; F'\prec F_1, F'\prec F_2 \, \}. 
\end{equation*} 
Thus $V_{i}$ consists of tuples of modular forms on the cusps of corank $i$ 
whose restrictions to smaller cusps are compatible. 
 
\begin{proposition}\label{thm: Siegel surjective}
Assume that the algebraic group $\mathbb{G}$ is ${\Q}$-simple and 
let $1< i \leq r$ or $i=1$ with $n(1)>1$. 
Then the image of $\Phi^{(i)}$ is $V_i$. 
\end{proposition}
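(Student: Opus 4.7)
The plan is to verify $\im(\Phi^{(i)}) = V_i$ by checking both inclusions. The inclusion $\im(\Phi^{(i)}) \subseteq V_i$ is immediate: for $\omega \in M_{can}$ and $F \in \mathcal{C}(i)$, the Siegel operator $\Phi_F$ factors through $H^0(\overline{X}_F, {\LL}|_{\overline{X}_F}) = M_{cat}(F)^+$ by construction, and for any $F' \prec F_1$, $F' \prec F_2$ with $F_1, F_2 \in \mathcal{C}(i)$ the two restrictions $\Phi_{F_j}(\omega)|_{F'}$ both coincide with $\omega|_{F'}$.

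For the reverse inclusion $V_i \subseteq \im(\Phi^{(i)})$, I will proceed by descending induction on $i$, starting at $i = r$. In the base case, $\Q$-simplicity of $\mathbb{G}$ forces the Hermitian Levi acting on the minimal cusp $F_r$ to have $\Q$-rank $0$, so by reduction theory $X_{F_r}$ is compact; hence $\overline{X}_F = X_F = X_F^{bb}$ and $M_{cat}(F)^+ = M_{cat}(F) = S\!_{cat}(F)$ for every $F \in \mathcal{C}(r)$. No cusp of ${\D}$ strictly dominates a corank-$r$ cusp, so the compatibility condition in $V_r$ is vacuous and $V_r = \bigoplus_{F \in \mathcal{C}(r)} S\!_{cat}(F)$. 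Since $M_{can}^{(r)} = M_{can}$, Corollary \ref{cor: Siegel surjective} (whose hypothesis is supplied by Lemma \ref{lem: n(i) condition D irreducible} when $r > 1$, or by the assumption $n(1) > 1$ when $r = i = 1$) immediately yields surjectivity of $\Phi^{(r)} = \Phi_0^{(r)}$ onto $V_r$.

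For the inductive step at level $i$, given $(f_F) \in V_i$ I will form the auxiliary tuple $g_{F'} := f_F|_{F'}$ indexed by $F' \in \mathcal{C}(i+1)$, using any $F \in \mathcal{C}(i)$ with $F' \prec F$. Existence of such an $F$ follows from the $\Q$-simple flag $F_1 \succ \cdots \succ F_r$, and well-definedness from the compatibility of $(f_F)$ in $V_i$. A direct check shows $(g_{F'}) \in V_{i+1}$, so by the inductive hypothesis there is $\omega' \in M_{can}$ with $\Phi^{(i+1)}(\omega') = (g_{F'})$. Setting $h_F := f_F - \Phi_F(\omega')$, each $h_F$ vanishes on every $F' \prec F$ of corank $i+1$, and the flag structure propagates this vanishing to every deeper cusp, so $(h_F) \in \bigoplus_F S\!_{cat}(F)$. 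A second application of Corollary \ref{cor: Siegel surjective} at level $i$ produces $\omega'' \in M_{can}^{(i)}$ with $\Phi_0^{(i)}(\omega'') = (h_F)$, and then $\omega := \omega' + \omega''$ satisfies $\Phi^{(i)}(\omega) = (f_F)$.

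The main obstacle I anticipate is the base case: justifying compactness of $X_{F_r}$ and the resulting identification $M_{cat}(F)^+ = S\!_{cat}(F)$ for $F \in \mathcal{C}(r)$, so that Corollary \ref{cor: Siegel surjective} applies cleanly to $V_r$. A secondary technical point is the two invocations of $\Q$-simplicity in the inductive step — namely that every $F' \in \mathcal{C}(i+1)$ lies below some $F \in \mathcal{C}(i)$, and that vanishing at corank-$(i+1)$ cusps forces vanishing at all cusps of corank $> i$ — both of which trace back to every cusp being $\mathbb{G}(\Q)^{\circ}$-equivalent to one of the reference cusps in the standard flag.
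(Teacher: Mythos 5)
Your proposal is correct and follows essentially the same route as the paper: the same descending induction starting at $i=r$ (where minimality of corank-$r$ cusps gives $V_r=\bigoplus_F S\!_{cat}(F)$ and Corollary \ref{cor: Siegel surjective} plus Lemma \ref{lem: n(i) condition D irreducible} apply), and the same inductive step, which the paper packages via the intermediate operator $\Phi^{(i+1)}_{(i)}$ and the kernel identification $\ker(\Phi^{(i+1)}_{(i)})=\bigoplus_{F}S\!_{cat}(F)$ while you carry out the equivalent element-wise subtraction $h_F=f_F-\Phi_F(\omega')$. The two points you flag as obstacles (compactness of the minimal cusp loci, and the two uses of $\Q$-simplicity) are exactly the inputs the paper draws from \cite{BB} Theorem 3.8 and Corollary 3.9, so no gap remains.
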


\begin{proof}
The inclusion $\Phi^{(i)}(M_{can})\subset V_{i}$ is obvious because 
$\Phi_{F}(f)|_{F'}=\Phi_{F'}(f)$ for $f\in M_{can}$ and $F'\prec F$. 
We shall show that $\Phi^{(i)}\colon M_{can}\to V_{i}$ is surjective. 
We use induction on $i$, starting from $i=r$ and decreasing. 
When $i=r$, cusps $F$ of corank $r$ are minimal cusps, 
so $X_{F}$ are closed and we have 
\begin{equation*}
V_r = \bigoplus_{F\in \mathcal{C}(r)} M_{cat}(F) = \bigoplus_{F\in \mathcal{C}(r)} S\!_{cat}(F). 
\end{equation*}
Then $\Phi^{(r)}=\Phi^{(r)}_{0}$ is surjective by 
Corollary \ref{cor: Siegel surjective} and Lemma \ref{lem: n(i) condition D irreducible}. 

Assuming that the assertion is proved in corank $i+1$, we proceed to corank $i$. 
By the ${\Q}$-simplicity of $\mathbb{G}$, every cusp of corank $i+1$ is in the closure of a cusp of corank $i$ 
(\cite{BB} Theorem 3.8). 
Therefore we can consider the intermediate total Siegel operator 
\begin{equation*}
\Phi^{(i+1)}_{(i)} : V_{i} \to V_{i+1}, \quad 
(f_{F})_{F\in \mathcal{C}(i)}\mapsto (f_{F}|_{F'})_{F'\in \mathcal{C}(i+1)}, 
\end{equation*}
which restricts modular forms on cusps of corank $i$ to those of corank $i+1$. 
This map is well-defined by the definition of $V_{i}$. 
The composition 
\begin{equation*}
\Phi^{(i+1)}_{(i)} \circ \Phi^{(i)} : M_{can} \to V_{i} \to V_{i+1} 
\end{equation*}
coincides with $\Phi^{(i+1)}$. 
By the assumption of induction, this composition is surjective. 
Moreover, we have 
\begin{equation*}
\ker (\Phi^{(i+1)}_{(i)})=\bigoplus_{F\in \mathcal{C}(i)} S\!_{cat}(F), 
\end{equation*}
because the boundary $\overline{X}_{F}-X_{F}$ of $X_{F}$ for $F\in \mathcal{C}(i)$ 
is the union of $\overline{X}_{F'}$ for cusps $F'\prec F$ of corank $i+1$ 
by the ${\Q}$-simplicity of $\mathbb{G}$ (\cite{BB} Corollary 3.9). 
By our assumption on $i$, 
this space is contained in ${\rm Im}(\Phi^{(i)})$ 
by Corollary \ref{cor: Siegel surjective} and Lemma \ref{lem: n(i) condition D irreducible}. 
Now the surjectivity of $\Phi^{(i+1)}_{(i)} \circ \Phi^{(i)}$ and 
the inclusion $\ker (\Phi^{(i+1)}_{(i)}) \subset {\rm Im}(\Phi^{(i)})$ 
imply the surjectivity of $\Phi^{(i)}$. 
\end{proof}

\section{Non-neat case}\label{sec: non-neat}

In \S \ref{sec: VT I} -- \S \ref{sec: GrW}, 
we assumed that the arithmetic group ${\G}$ is neat. 
In this section we show that some of the main results of \S \ref{sec: VT I} -- \S \ref{sec: GrW} 
are still valid for non-neat groups. 
Basically assertions whose statements do not involve residue maps 
can be extended to the non-neat case. 

Let ${\D}$ be a Hermitian symmetric domain and ${\G}$ be an arithmetic group acting on ${\D}$, 
not necessarily neat. 
Let $X={\D}/{\G}$. 
This is a quasi-projective variety, no longer smooth in general, 
but its cohomology still has a canonical mixed Hodge structure (\cite{DeIII}). 
We keep the assumption that the Baily-Borel boundary has codimension $\geq 2$. 

We first consider the assertions concerning the Hodge numbers of $H^k(X)$. 
We choose a neat normal subgroup ${\G}'\lhd {\G}$ of finite index and write 
$X'={\D}/{\G}'$ and $G={\G}/{\G}'$. 
Then $X$ is the quotient of the smooth variety $X'$ by the finite group $G$. 
Let $p\colon X'\to X$ be the projection. 
Let $(W_{\bullet}, F^{\bullet})$ be the mixed Hodge structure on $H^k(X', {\Q})$. 
We have the isomorphism (see \cite{Br} \S  III.2) 
\begin{equation*}
p^{\ast} : H^k(X, {\Q}) \stackrel{\simeq}{\to} H^k(X', {\Q})^{G} \subset H^k(X', {\Q}). 
\end{equation*}
This is an isomorphism of mixed Hodge structures, 
where the weight and the Hodge filtrations on $H^k(X', {\Q})^{G}$ are 
the $G$-invariant parts $(W_{\bullet}^{G}, (F^{\bullet})^{G})$ of $(W_{\bullet}, F^{\bullet})$ 
by the strictness of morphisms of mixed Hodge structures. 
By the complete reducibility of the $G$-representation on $H^k(X')$, 
we have a canonical isomorphism 
\begin{equation*}
{\Gr}^{W^{G}}_{k+m}(H^k(X')^{G}) \simeq ({\Gr}^{W}_{k+m}H^k(X'))^{G}. 
\end{equation*}
Hence $p^{\ast}$ induces an isomorphism of pure Hodge structures 
\begin{equation}\label{eqn: G-invariant Gr}
p^{\ast} : {\Gr}^{W}_{k+m}H^k(X) \stackrel{\simeq}{\to} ({\Gr}^{W}_{k+m}H^k(X'))^{G}\subset  {\Gr}^{W}_{k+m}H^k(X'). 
\end{equation}
In particular, the range of $(p, q)$ with nonzero Hodge number satisfies 
the same constraint \eqref{eqn: (p,q) range} as the smooth case. 

\begin{proposition}
The statements of Lemma \ref{lem: pure BB}, Theorem \ref{thm: VT I}, 
Proposition \ref{thm: VT II}, Corollary \ref{cor: VT II} and Corollary \ref{cor: 0-dim cusp}
hold for non-neat groups ${\G}$ as well. 
\end{proposition}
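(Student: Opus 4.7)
The plan is to reduce each of the listed statements for the possibly non-neat group ${\G}$ to the already-established neat case, exploiting the machinery the author has set up: choose a neat normal subgroup ${\G}'\lhd {\G}$ of finite index, write $X'={\D}/{\G}'$ and $G={\G}/{\G}'$, and use the canonical isomorphism $p^{\ast}\colon H^k(X,{\Q})\simeq H^k(X',{\Q})^{G}$ together with its weight-graded counterpart \eqref{eqn: G-invariant Gr}. The key observation is that all properties in play, namely being pure of weight $k$, being a Tate twist of an effective Hodge structure of weight $\leq w$, and lying in $F^pH^k\cap W_mH^k$, are stable under passing to $G$-invariant sub-Hodge-structures (using, when needed, the strictness of morphisms of mixed Hodge structures for $F^{\bullet}$ and $W_{\bullet}$).

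For Lemma \ref{lem: pure BB}, the codimension $c$ of the Baily-Borel boundary depends only on ${\D}$ and $\mathbb{G}$, so the neat case for $X'$ gives purity of $H^k(X')$ for $k<c$, which descends to $H^k(X)=H^k(X')^G$. For Theorem \ref{thm: VT I} the sequence $n(1)<\cdots<n(r)$ depends only on ${\D}$, and applying the neat case to $X'$ shows ${\Gr}^W_{k+m}H^k(X')$ is a Tate twist of an effective pure Hodge structure of weight $\leq n-n(i)$; since ${\Gr}^W_{k+m}H^k(X)$ embeds in ${\Gr}^W_{k+m}H^k(X')$ by \eqref{eqn: G-invariant Gr}, the same bound holds. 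For Proposition \ref{thm: VT II}, the identities $F^kH^k(X)=F^kH^k(X')^G$ and $W_kF^kH^k(X)=(W_kF^kH^k(X'))^G$ (strictness) turn the neat-case equality $F^kH^k(X')=W_kF^kH^k(X')$ into the same equality on $X$. Corollary \ref{cor: VT II} is a formal consequence of the extended Proposition \ref{thm: VT II}.

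The substantive case is Corollary \ref{cor: 0-dim cusp}. Applying the neat case to $X'$, Theorem \ref{thm: GrWFn} (together with the computation in the corollary's proof) gives a canonical decomposition
\begin{equation*}
{\Gr}^W_{2n}H^n(X')\;\simeq\;\bigoplus_{F\in \mathcal{C}(r)(X')} H^0(K_{\overline{Y}_F}),
\end{equation*}
in which each summand is $1$-dimensional because $\overline{Y}_F$ is a point. The finite group $G$ permutes the summands according to its permutation action on the $0$-dimensional cusps of $(X')^{bb}$, and the number of $G$-orbits equals the number of $0$-dimensional cusps of $X^{bb}$. Thus it suffices to show that $G$ acts trivially on each summand, i.e., that the stabilizer in $G$ of a $0$-dim cusp $F$ acts trivially on the one-dimensional space $H^0(K_{\overline{Y}_F})$; granted this, $\dim({\Gr}^W_{2n}H^n(X'))^G$ equals the number of $G$-orbits, which is the required count.

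The main obstacle is precisely this triviality of the stabilizer character, and I would handle it via the Mumford identification. By \eqref{eqn: L = K+D} and Remark \ref{rmk: LF}, each summand $H^0(K_{\overline{Y}_F})$ is canonically the fiber $({\LL}_F)_{F}$ of the ${\G}$-equivariantly extended automorphic line bundle ${\LL}$ on $X^{bb}$ at the cusp $F$, and the residue computation at the end of \S\ref{ssec: weight=corank} shows that the class attached to $F$ transforms according to the action of the stabilizer on this fiber. Since ${\LL}$ exists and is canonically ${\G}$-equivariant on the full locally symmetric space (not just ${\G}'$-equivariantly on $X'$), any ${\G}$-modular form of canonical weight has a well-defined value at $F$, which forces the stabilizer in $G$ to act trivially on $({\LL}_F)_{F}$. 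This yields Corollary \ref{cor: 0-dim cusp} in the non-neat setting and completes the proof.
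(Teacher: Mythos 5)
Your reduction of Lemma \ref{lem: pure BB}, Theorem \ref{thm: VT I}, Proposition \ref{thm: VT II} and Corollary \ref{cor: VT II} to the neat case, via $p^{\ast}\colon H^k(X,{\Q})\simeq H^k(X',{\Q})^{G}$ and the embedding \eqref{eqn: G-invariant Gr}, is exactly the paper's argument and is fine.

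The problem is the treatment of Corollary \ref{cor: 0-dim cusp}. You have correctly isolated the crux: one must show that the stabilizer $G_{F}\subset G$ of a $0$-dimensional cusp acts trivially on the corresponding one-dimensional summand. But the justification you offer does not establish this. The ${\G}$-equivariance of ${\LL}$ on $X^{bb}$ (equivalently, the fact that a ${\G}$-modular form of canonical weight has a well-defined ``value'' at $F$) only says that the fiber of ${\LL}$ at the cusp, i.e.\ $H^0(K_{\overline{Y}_{F}})\simeq S\!_{cat}(F,{\G}'(F)_{{\Z}})$, carries a character of $G_{F}$; a ${\G}$-invariant section then takes its value in the subspace fixed by that character, which is $0$ whenever the character is nontrivial. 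The character need not be trivial for the canonical weight: for ${\G}={\rm Sp}(2g,{\Z})$ the element $\gamma={\rm diag}(P,{}^tP^{-1})$ with $P\in{\rm GL}_g({\Z})$, $\det P=-1$, lies in $G={\rm Sp}(2g,{\R})$ (which is connected), stabilizes the $0$-dimensional cusp, and acts on the fiber of the weight $g+1$ bundle there by $(\det P)^{-(g+1)}=(-1)^{g+1}$, which is $-1$ when $g$ is even; correspondingly every weight $g+1$ form for ${\rm Sp}(2g,{\Z})$ with $g$ even has vanishing value at the $0$-dimensional cusp. So the step ``equivariance of ${\LL}$ forces the stabilizer to act trivially'' is a genuine gap, not a formality, and it is exactly the point on which the count of $G$-invariants depends.

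For comparison, the paper does not work on the cohomological side at all here: it uses the $G$-equivariance of $M_{can}({\G}')\simeq F^nH^n(X')$ and of the Siegel-operator isomorphism $M_{can}({\G}')/M_{can}^{(r-1)}({\G}')\simeq\bigoplus_{F}S\!_{cat}(F)$ from Theorem \ref{thm: CD}, and then takes $G$-invariants, asserting that $G$ acts on the right-hand side by permuting the $0$-dimensional cusps of $X'$. That assertion is precisely the triviality of the same stabilizer character that your argument fails to prove (the paper gives no further justification for it), so your proposal does not supply the missing input; it relocates it to the fiber of ${\LL}$ and then begs the question. If you want a correct proof along your lines, you must actually verify that the stabilizer of each $0$-dimensional cusp in ${\G}$ acts trivially on $H^0(K_{\overline{Y}_{F}})$, or else restrict the statement to groups for which this holds.
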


\begin{proof}
Except for Corollary \ref{cor: 0-dim cusp}, 
these are vanishing assertions for some Hodge numbers. 
By the embedding \eqref{eqn: G-invariant Gr}, 
the vanishing assertions for ${\Gr}^{W}_{k+m}H^k(X')$ 
imply the corresponding ones for ${\Gr}^{W}_{k+m}H^k(X)$. 

As for Corollary \ref{cor: 0-dim cusp}, we make use of the Siegel operators. 
The isomorphism $M_{can}({\G}')\to F^nH^n(X')$ is $G$-equivariant 
(independent of toroidal compactifications). 
Hence it induces an isomorphism 
\begin{equation*}
(M_{can}({\G}')/M_{can}^{(r-1)}({\G}'))^{G} \to 
({\Gr}^{W}_{2n}F^nH^n(X'))^{G} = ({\Gr}^{W}_{2n}H^n(X'))^{G} \simeq {\Gr}^{W}_{2n}H^n(X). 
\end{equation*}
The isomorphism 
\begin{equation*}
M_{can}({\G}')/M_{can}^{(r-1)}({\G}') \stackrel{\simeq}{\to} 
\bigoplus_{F\in \mathcal{C}(r)'} S\!_{cat}(F, {\G}'(F)) 
\end{equation*}
given by the Siegel operators is also $G$-equivariant, 
where $\mathcal{C}(r)'$ is the set of $0$-dimensional cusps of $X'$ and 
each $S\!_{cat}(F, {\G}'(F))$ is $1$-dimensional. 
Since $G$ acts on the right hand side by the permutation on $\mathcal{C}(r)'$, 
taking the $G$-invariant part shows that $\dim {\Gr}^{W}_{2n}H^n(X)$ is equal to $|\mathcal{C}(r)'/G|$, 
which is the number of $0$-dimensional cusps of $X$. 
\end{proof}

Next we consider the assertions about the Siegel operators. 
The space $M_{can}({\G})$ of ${\G}$-modular forms of canonical weight is defined  
as $H^{0}({\D}, K_{{\D}})^{{\G}}$. 
This is the $G$-invariant part of $M_{can}({\G}')$. 
For a cusp $F$ of ${\D}$, we write $G_{F} = {\GFZ}/{\G}'(F)$. 
This is the stabilizer of the $F$-locus $X'_{F}\subset (X')^{bb}$ in $G$. 
We set 
\begin{equation*}
M_{cat}(F, {\GFZ}) = M_{cat}(F, {\G}'(F))^{G_F}, \quad 
S\!_{cat}(F, {\GFZ}) = S\!_{cat}(F, {\G}'(F))^{G_F}, 
\end{equation*}
which do not depend on the choice of ${\G}'$. 
The Siegel operator 
\begin{equation*}
\Phi_{F}\colon M_{can}({\G}) \to M_{cat}(F, {\GFZ}) 
\end{equation*} 
is defined also for ${\G}$ as it is actually defined at the level of ${\D}$ (\cite{BB}). 
This is the restriction of the Siegel operator for ${\G}'$ to the $G$-invariant part. 
The total Siegel operator for ${\G}$ 
\begin{equation*}
M_{can}({\G}) \to \bigoplus_{F\in \mathcal{C}(i)} M_{cat}(F, {\GFZ}) 
\end{equation*}
is identified with the $G$-invariant part of the total Siegel operator for ${\G}'$. 
Hence the filtration $({\Mi}({\G}))_{i}$ can be defined similarly, 
and is the $G$-invariant part of the filtration $(M_{can}^{(i)}({\G}'))_{i}$ on $M_{can}({\G}')$. 
 
\begin{proposition}\label{prop: Siegel non-neat}
The following holds for non-neat groups ${\G}$ as well. 

(1) 
We have a natural isomorphism $F^nH^n(X)\simeq M_{can}({\G})$. 
This maps the weight filtration $(W_{n+n(i)}F^nH^n(X))_{i}$ 
to the filtration $({\Mi}({\G}))_{i}$. 

(2) Let $1\leq i \leq r$. 
If \eqref{eqn: n(i) condition} holds, 
the cuspidal total Siegel operator  
\begin{equation*}
{\Mi}({\G}) \to \bigoplus_{F\in \mathcal{C}(i)} S\!_{cat}(F, {\GFZ}) 
\end{equation*}
is surjective. 
\end{proposition}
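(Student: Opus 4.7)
The plan is to deduce both parts from the corresponding statements for a neat subgroup by taking $G$-invariants, following the same template used earlier in \S \ref{sec: non-neat} for Corollary \ref{cor: 0-dim cusp}. Fix a neat normal subgroup ${\G}' \lhd {\G}$ of finite index, let $G={\G}/{\G}'$, and write $X'={\D}/{\G}'$. By the complete reducibility of finite group representations, taking $G$-invariants is exact, so $G$-equivariant surjections of mixed Hodge structures (or of $G$-modules) remain surjective on $G$-invariant parts.

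For part (1), I would first observe that the isomorphism $M_{can}({\G}')\simeq F^nH^n(X')$ arising from Mumford's extension is canonical, hence $G$-equivariant. Since $M_{can}({\G})=M_{can}({\G}')^{G}$ by definition and $F^nH^n(X)=(F^nH^n(X'))^{G}$ via the embedding $p^{\ast}$ discussed in this section, taking $G$-invariants yields the desired natural isomorphism $M_{can}({\G})\simeq F^nH^n(X)$. The identification of filtrations then follows from Theorem \ref{thm: weight=corank} applied to ${\G}'$: on the automorphic side, ${\Mi}({\G})$ is by construction ${\Mi}({\G}')^{G}$, and on the Hodge side, the $G$-equivariance of $p^{\ast}$ together with strictness gives $W_{n+n(i)}F^nH^n(X)=(W_{n+n(i)}F^nH^n(X'))^{G}$. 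Taking $G$-invariants of the equality $M_{can}^{(i)}({\G}')\simeq W_{n+n(i)}F^nH^n(X')$ produces the claim.

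For part (2), apply Corollary \ref{cor: Siegel surjective} to the neat group ${\G}'$: under the condition \eqref{eqn: n(i) condition} the cuspidal total Siegel operator
\[
\Phi^{(i)}_{0}({\G}') : M_{can}^{(i)}({\G}') \to \bigoplus_{F' \in \mathcal{C}'(i)} S\!_{cat}(F', {\G}'(F')_{{\Z}})
\]
is surjective, where $\mathcal{C}'(i)$ denotes ${\G}'$-equivalence classes of cusps $F$ of ${\D}$ with $\dim U(F)=n(i)$. This map is $G$-equivariant since the Siegel operator is defined at the level of ${\D}$ and $G$ permutes the cusps. Decomposing the target according to $G$-orbits, the ${\G}$-orbits on $\mathcal{C}'(i)$ are precisely the classes in $\mathcal{C}(i)$, and for a representative $F$ with stabilizer $G_{F}={\GFZ}/{\G}'(F)_{{\Z}}$ we obtain
\[
\Bigl( \bigoplus_{F' \in G\cdot F} S\!_{cat}(F', {\G}'(F')_{{\Z}}) \Bigr)^{G} \; \simeq \; S\!_{cat}(F, {\G}'(F)_{{\Z}})^{G_{F}} \; = \; S\!_{cat}(F, {\GFZ})
\]
by the very definition of $S\!_{cat}(F, {\GFZ})$. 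Taking $G$-invariants of the surjection and using exactness of the functor $(-)^{G}$ then gives surjectivity of the cuspidal total Siegel operator for ${\G}$.

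The main point to verify carefully, and the only real obstacle, is the compatibility of the various structures with the $G$-action: namely that Mumford's isomorphism is canonical enough to be $G$-equivariant, that the definition of $\Phi_{F}$ for the non-neat group as the restriction of the ${\G}'$-Siegel operator to $G$-invariants agrees with the intrinsic definition via sections of ${\LL}$ on $X^{bb}$, and that the orbit decomposition for cusps matches the filtration indexing. Once these bookkeeping compatibilities are in place, the whole argument reduces to the exactness of $G$-invariants, which is immediate for finite groups in characteristic zero.
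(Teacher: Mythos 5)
Your proposal is correct and follows essentially the same route as the paper: both deduce the statement from the neat case for ${\G}'\lhd{\G}$ by taking $G$-invariants, using $G$-equivariance of the Mumford isomorphism and of the Siegel operators (with the orbit decomposition of cusps and the definition $S\!_{cat}(F,{\GFZ})=S\!_{cat}(F,{\G}'(F)_{\Z})^{G_F}$), and the exactness of $(-)^{G}$ in characteristic zero to preserve surjectivity. The paper merely packages the cusp-orbit bookkeeping you spell out in part (2) into the definitions preceding the proposition.
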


\begin{proof}
(1) 
Since the isomorphism $F^nH^n(X')\simeq M_{can}({\G}')$ for ${\G}'$ is $G$-equivariant, 
it induces an isomorphism $F^nH^n(X)\simeq M_{can}({\G})$ between the $G$-invariant parts. 
The weight filtration on $F^nH^n(X)$ is the $G$-invariant part of the weight filtration on $F^nH^n(X')$,  
while the filtration $({\Mi}({\G}))_{i}$ on $M_{can}({\G})$ is the $G$-invariant part of 
the filtration $(M_{can}^{(i)}({\G}'))_{i}$ on $M_{can}({\G}')$. 
Therefore the two filtrations agree. 

(2) The cuspidal total Siegel operator for ${\G}$ is the $G$-invariant part of 
the cuspidal total Siegel operator for ${\G}'$. 
The map between the $G$-invariant parts of a surjective $G$-homomorphism remains surjective 
by the complete reducibility of $G$-representations and Schur's lemma. 
\end{proof} 

The non-neat version of Proposition \ref{thm: Siegel surjective} also holds 
if we simply set $V_i({\G})=V_i({\G}')^{G}$, 
which can be considered as the space of tuples of modular forms on the ${\G}$-cusps of corank $i$ 
whose restrictions to smaller cusps are compatible.

\section{Examples}\label{sec: example}

In this section we explain the results of \S \ref{sec: VT I} -- \S \ref{sec: non-neat} more explicitly 
for some classical examples of ${\D}$ and compare them with known results.

\subsection{Hilbert modular varieties}\label{ssec: Hilbert}

The study of mixed Hodge structures of Hilbert modular varieties 
was initiated by Oda \cite{Od} in the case $k=n$, 
and completed by Ziegler (\cite{Fr} \S III.7) for general $k$. 
See \cite{Fr} Theorem III.7.9 for the final result. 
In this case our results add nothing. 

What played a key role in \cite{Od}, \cite{Fr} 
is Harder's theory of Eisenstein cohomology, which is fully developed for Hilbert modular varieties 
(see \cite{Har1}, \cite{Har2} etc). 
For general ${\D}$, the relation between Eisenstein cohomology 
and mixed Hodge structure seems to remain rather mysterious (cf.~\cite{OS1}, \cite{HZII}, \cite{Na}).

\subsection{Siegel modular varieties}\label{ssec: Siegel}
 
Let $X={\D}/{\G}$ be a Siegel modular variety 
where ${\D}$ is the Siegel upper half space of genus $g>1$ and ${\G}<{\rm Sp}(2g, {\Q})$. 
We have $\dim X=g(g+1)/2$, 
and the Baily-Borel boundary has codimension $g$. 
Thus $H^k(X)$ is pure when $k<g$. 
The ${\Q}$-rank of ${\rm Sp}(2g, {\Q})$ is $g$. 
For $1\leq g' \leq g$ we have 
$n(g')=g'(g'+1)/2$. 
Indeed, $U(F)$ is isomorphic to the space of real symmetric matrices of size $g'$. 

Miyazaki \cite{Mi} proved that $\dim {\Gr}_{2n}^{W} H^n(X)$ 
is equal to the number of $0$-dimensional cusps and $\dim {\Gr}_{2n-1}^{W} H^n(X)=0$ 
for principal congruence subgroups of ${\rm Sp}(2g, {\Z})$ of level $\geq 3$. 
Corollary \ref{cor: 0-dim cusp} extends his first result, 
and Theorem \ref{thm: VT I} extends his second result. 
Mixed Hodge structures of Siegel modular $3$-folds were first studied by Oda-Schwermer \cite{OS1}. 
%As explained in \S \ref{sec: VT II}, Proposition \ref{thm: VT II} is a generalization of a result of \cite{OS1}. 
The Hodge numbers of some special Siegel modular $3$-folds were completely determined by 
Hoffman-Weintraub \cite{HW1}, \cite{HW2}. 

The canonical weight for Siegel modular forms is $g+1$. 
Corollary \ref{cor: Siegel surjective} and Proposition \ref{thm: Siegel surjective} say that in weight $g+1$, 
the total Siegel operators of corank $>1$ are surjective. 
For $0$-dimensional cusps, this was essentially proved by Shimura \cite{Shimura}. 
When ${\G}={\rm Sp}(2g, {\Z})$, Siegel operators of arbitrary corank were 
studied by Weissauer \cite{We} thoroughly. 
In this case, our result is covered by his result. 
Both Shimura and Weissauer used the Hecke summation of Eisenstein series. 

The phenomenon that the defect of surjectivity of Siegel operators of corank $1$ is related to $H^0(X, \Omega^{n-1})$ 
was first observed by Weissauer for ${\G}={\rm Sp}(2g, {\Z})$ 
in the form of vector-valued modular forms (\cite{We} Satz 13). 
Proposition \ref{prop: cokernel Siegel n(1)=1} shows that this is a general phenomenon, 
and offers a geometric explanation. 
An explicit non-surjective example is given in \cite{SM} Theorem 5.

\subsection{Orthogonal modular varieties}\label{ssec: orthogonal}

Let $X={\D}/{\G}$ be a modular variety associated to 
an integral quadratic form of signature $(2, n)$ with $n\geq 3$, say of Witt index $2$, 
where ${\G}$ is an arithmetic subgroup of the orthogonal group. 
In this case $\dim X= n$, and $X^{bb}$ has only $0$-dimensional and $1$-dimensional cusps. 
Thus the ${\Q}$-rank is $2$, and $H^k(X)$ is pure when $k<n-1$. 
We have $n(1)=1$ and $n(2)=n$, so the Hodge stairs take a simple shape. 
%To be more specific, let $k\geq n-1$ and $m>1$. 
%Then ${\Gr}^{W}_{k+m}H^k(X)$ vanishes if $k+m$ is odd, 
%while it has only Hodge component of degree $((k+m)/2, (k+m)/2)$ if $k+m$ is even. 
%We also know the top graded quotient when $k=n-1$ and $n$: 
%we have ${\Gr}^{W}_{2n-2}H^{n-1}(X)=0$ by Corollary \ref{cor: VT II}, 
By Corollary \ref{cor: 0-dim cusp}, $\dim {\Gr}^{W}_{2n}H^{n}(X)$ is equal to the number of $0$-dimensional cusps. 

The canonical weight is $n$, 
where weight $1$ corresponds to the tautological line bundle on the compact dual of ${\D}$, 
the isotropic quadric. 
(We have to take the twist by the determinant character when ${\G}$ has torsion.) 
Corollary \ref{cor: Siegel surjective} says that in weight $n$, 
the total Siegel operator to $0$-dimensional cusps is surjective. 
This seems to be new at least as an explicit statement. 
Indyk \cite{In} studied analytic continuation of Eisenstein series from the viewpoint of $L$-functions. 
Probably surjectivity of the Siegel operator could also be derived by this approach.

\subsection{Unitary modular varieties}\label{ssec: unitary}

We consider the unitary group ${\rm U}(p, q)$ 
of a Hermitian form of signature $(p, q)$ with $p\leq q$ over an imaginary quadratic field, 
say of Witt index $p$ (maximal). 
Let $X={\D}/{\G}$ be an associated modular variety, 
where ${\G}$ is an arithmetic subgroup of ${\rm U}(p, q)$. 
When $p=1$, ${\D}$ is a complex ball. 
When $p=q$, ${\D}$ is a so-called Hermitian upper half space. 
We have $\dim X=pq$, 
and the Baily-Borel boundary has codimension $p+q-1$. 
Hence $H^k(X)$ is pure when $k<p+q-1$. 
The ${\Q}$-rank of ${\rm U}(p, q)$ is $p$ (the Witt index). 
For $1\leq i\leq p$ we have $n(i)=i^2$. 
Indeed, $U(F)$ is isomorphic to the space of Hermitian matrices of size $i$. 
Therefore, when $p<q$, we have ${\Gr}^{W}_{n+m}H^n(X)=0$ in the range $p^2 < m \leq n$. 
On the other hand, when $p=q$, 
Corollary \ref{cor: 0-dim cusp} says that 
$\dim {\Gr}^{W}_{2n}H^n(X)$ is equal to the number of $0$-dimensional cusps. 

The canonical weight is $p+q$, where weight $1$ corresponds to 
the dual Pl\"ucker line bundle on the Grassmannian ${\rm G}(p, p+q)$, the compact dual of ${\D}$. 
Corollary \ref{cor: Siegel surjective} and Proposition \ref{thm: Siegel surjective} say that in weight $p+q$, 
the total Siegel operators of corank $>1$ are surjective. 
In the case of $0$-dimensional cusps of the Hermitian upper half spaces ($p=q$), 
this was essentially proved by Shimura \cite{Shimura} using the Hecke summation of Eisenstein series.

\appendix 

\section{Toroidal compactifications with SNC boundary divisor}\label{sec: SNC}

Let ${\D}$ be a Hermitian symmetric domain and ${\G}$ be a neat arithmetic group acting on ${\D}$. 
Let $X={\D}/{\G}$. 
We can take a smooth projective toroidal compactification ${\Xcpt}$ of $X$ (\cite{AMRT}), 
where the smoothness is assured by requiring that every cone $\sigma \in \Sigma_{F}$ 
is generated by a part of a ${\Z}$-basis of ${\UFZ}$. 
In this situation, the boundary divisor $D$ is normal crossing. 
However, normal crossing is weaker than \textit{simple} normal crossing (SNC), 
the latter requiring furthermore that every irreducible component of $D$ is smooth, 
or in other words, has no self-intersection. 
In order to compute the mixed Hodge structure of $X$ with this compactification, we need $D$ to be SNC. 

In this appendix we provide a proof of the following well-known folklore result, 
which has been used implicitly in the literature.  

\begin{proposition}\label{prop: SNC exist}
There exists a smooth projective toroidal compactification of $X$ with SNC boundary divisor. 
\end{proposition}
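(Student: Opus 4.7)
The plan is to begin with any smooth projective $\Gamma$-admissible collection of fans $\Sigma = (\Sigma_F)_F$, whose existence is given by \cite{AMRT} Corollary IV.2.4, and to refine it $\Gamma$-equivariantly via the \emph{barycentric subdivision} so that the resulting toroidal compactification has SNC boundary while remaining smooth and projective. The combinatorial criterion for SNC, extracted from the proof of Lemma \ref{lem: D(m) toroidal}, is the following: the boundary divisor $D$ of ${\Xcpt}$ is SNC if and only if for every $F$-cone $\sigma \in \Sigma_F$ the rays of $\sigma$ lie in pairwise distinct ${\G}$-equivalence classes. Indeed, an irreducible boundary component $D_{[\tau]}$ has self-intersection exactly when some cone $\sigma$ contains two rays in the orbit $[\tau]$, since the projection $\mathcal{X}(F)^{\Sigma_F}\to {\Xcpt}$ is a local isomorphism near ${\SFs}$.

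The refinement $\Sigma'$ is defined by adding, for each cone $\tau \in \Sigma$ of positive dimension, a new ray generated by $b_\tau = v_1 + \cdots + v_k$, where $v_1,\ldots,v_k$ are the primitive generators of the rays of $\tau$, and declaring the top-dimensional cones of $\Sigma'_F$ to be $\mathrm{cone}(b_{\tau_1}, \ldots, b_{\tau_d})$ for each maximal flag $\tau_1 \subsetneq \cdots \subsetneq \tau_d$ of cones in $\Sigma_F$. Since $b_\tau$ is ${\G}$-equivariantly determined by $\tau$, this construction is ${\G}$-equivariant and preserves the admissibility condition across inclusions $F\preceq F'$. Smoothness of $\Sigma'$ is elementary: if $\tau_d$ is generated by part of a ${\Z}$-basis $v_1, \ldots, v_d$ of ${\UFZ}$, reindexed so that $\tau_i = \mathrm{cone}(v_1,\ldots,v_i)$, then the transition from $(v_1, \ldots, v_d)$ to $(b_{\tau_1}, \ldots, b_{\tau_d})$ is lower triangular with ones on the diagonal, hence unimodular, so $(b_{\tau_1}, \ldots, b_{\tau_d})$ is again a part of a ${\Z}$-basis. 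The SNC condition is then automatic: in each top-dimensional cone $\mathrm{cone}(b_{\tau_1}, \ldots, b_{\tau_d})$ of $\Sigma'$, the rays are barycenters of cones of strictly different dimensions, and since the ${\G}$-action preserves dimension, no two such rays can be ${\G}$-equivalent.

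The remaining step, which I expect to be the main obstacle, is to verify that $\Sigma'$ admits a polarization function in the sense of \cite{AMRT} Definition IV.2.1. Starting from a polarization function $\varphi$ for $\Sigma$, my plan is to perturb it at the new barycenters by setting $\varphi'(b_\tau) = \varphi(b_\tau) - \epsilon_{[\tau]}$, where $\epsilon_{[\tau]} > 0$ depends only on the ${\G}$-orbit $[\tau]$, and to extend $\varphi'$ piecewise linearly on $\Sigma'$. For an appropriate cascade of choices, taking $\epsilon_{[\tau]}$ smaller and smaller as $\dim \tau$ increases, strict convexity at each codimension-one wall of $\Sigma'$ can be ensured by a standard argument, and the growth and positivity conditions required of a polarization function are preserved by small perturbations. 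Because there are only finitely many ${\G}$-orbits of cones in each $\Sigma_F$ and only finitely many cusps $F$ modulo ${\G}$, such a simultaneous ${\G}$-equivariant choice exists, yielding the desired smooth projective toroidal compactification with SNC boundary.
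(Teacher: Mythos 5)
Your overall strategy differs genuinely from the paper's. You construct a single refinement (the barycentric subdivision) and try to verify all three properties — smoothness, projectivity, and the separation condition — directly, whereas the paper factors the argument: it introduces the separation condition (no two rays of a common cone are $\Gamma$-equivalent), proves a lemma that this condition is \emph{stable under arbitrary $\Gamma$-admissible subdivision}, and therefore only needs to exhibit \emph{some} $\Gamma$-admissible subdivision (not smooth, not projective) with the property. Once that is done, the existence of a further smooth projective $\Gamma$-admissible refinement is supplied wholesale by \cite{AMRT}, and the stability lemma carries the separation condition along. The paper's explicit subdivision is also different from yours: it chooses, $\Gamma$-equivariantly, a two-division of each $2$-dimensional cone and propagates it through higher cones as a star subdivision, whereas you take the full barycentric subdivision. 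Your argument that the barycentric subdivision separates rays is actually cleaner than the paper's — the rays of a cone of $\Sigma'$ are barycenters of cones of pairwise distinct dimensions, and $\Gamma$ preserves dimension — and your verification that barycentric subdivision of a smooth fan stays smooth is correct.

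The gap is exactly where you flag it: projectivity. You propose perturbing a polarization function $\varphi$ at the barycenters by a cascade of small $\epsilon$'s, but you do not carry out the argument — you only assert that ``strict convexity at each codimension-one wall can be ensured by a standard argument'' and that ``growth and positivity conditions... are preserved by small perturbations.'' In the arithmetic setting this requires more care than in the finite toric setting: the function must remain $\Gamma$-invariant and integral on rays, there are infinitely many cones in each $\Sigma_F$ (finitely many only modulo $\Gamma_F$), and the admissibility constraints across the inclusions $\Sigma_{F'}\subset\Sigma_F$ for $F\prec F'$ must also be checked. So as written this step is unfinished. The cleanest repair is precisely the paper's device: prove that the separation condition passes to arbitrary $\Gamma$-admissible subdivisions (the elementary argument is that a $\Gamma$-element matching two rays of a cone of $\Sigma''$ would match the two minimal cones of $\Sigma'$ containing them, contradicting the separation condition for $\Sigma'$), and then invoke \cite{AMRT} Corollary IV.2.4 to refine your barycentric subdivision $\Sigma'$ to a smooth projective $\Gamma$-admissible $\Sigma''$. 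With that one lemma added, your barycentric construction gives a complete and rather elegant proof; without it, the projectivity step is a genuine hole.
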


The history of this folklore result seems to have some kind of self-intersections. 
In \cite{FC} (p.98 (ii) and Remark IV.5.8 (a)), Faltings and Chai 
stated a condition on the fans $\Sigma$ for the boundary to be SNC, 
and suggested its existence, both without proof. 
In \cite{YZ} \S 2.4, Yau and Zhang gave an argument that the Faltings-Chai condition indeed implies SNC. 
However, it was pointed out later by Lan (from another viewpoint) that   
the FC condition has to be modified: see Condition 6.2.5.25 in the revised version of \cite{La}. 
The relevance of this correction to the SNC matter was noticed by R\"ossler (\cite{Ro} Lemma 3.1.10). 
R\"ossler suggested that the argument of Yau-Zhang still works with the Faltings-Chai-Lan condition, 
but without detail. 
Moreover, it seems that existence is not proved in both \cite{YZ} and \cite{Ro}. 
This is suggested by Lan (\cite{La}, the paragraph just before Definition 6.2.5.28), but without detail.   

In fact, around the same time as Faltings-Chai, 
Pink gave an SNC condition in the setting of mixed Shimura varieties with a proof 
(\cite{Pi} Condition 7.12 and Corollary 7.17 (b)). 
However, Pink proved the existence only for a subgroup of the given group ${\G}$ (\cite{Pi} Proposition 7.13), 
which is not sufficient for our purpose. 
%The Pink condition and the FCL condition are different, but related. 

All in all, the relevant pieces are scattered or only implicit in the literature. 
Therefore, in order to secure the validity of this folklore result, 
we decided to provide a coherent and self-contained proof in this appendix. 
In \S \ref{ssec: SNC condition toroidal}, we give a simple SNC condition in the spirit of Pink. 
In \S \ref{ssec: SNC existence}, we prove existence of $\Sigma$ satisfying this condition. 
%Our condition is stronger than the FCL condition, 
We will not pursue the relation with the FCL condition, 
as our purpose is to secure the existence. 
%but let us just note that to consider only ${\G}_{F}$-equivalence is not sufficient in the SNC matter 
%(see \S \ref{ssec: SNC condition toroidal}). 

\subsection{A criterion}\label{ssec: SNC condition toroidal}

Let ${\Xcpt}$ be a smooth toroidal compactification of $X$. 
%The boundary divisor $D$ is normal crossing, 
%but its irreducible components may have self-intersections. 
If $F, F'$ are cusps of ${\D}$ with $F\prec F'$, 
we regard $\Sigma_{F'}\subset \Sigma_{F}$ via the inclusion $U(F')\subset U(F)$. 
In this way, cones in $\Sigma$ are glued. 
Then ${\G}$ acts on $\Sigma$ by the ${\G}$-admissibility of $\Sigma$. 
We consider the following condition on $\Sigma$: 
 
\begin{condition}\label{separate condition}
For any cone $\sigma\in \Sigma$, 
two different rays of $\sigma$ cannot be ${\G}$-equivalent. 
\end{condition}

Our collection of fans $\Sigma$ here is smooth, 
but note that this condition makes sense even when $\Sigma$ is not smooth. 
Note also that ${\G}$-equivalence of two rays in $\Sigma_{F}$ is stronger than ${\G}_{F}$-equivalence 
when they are in the boundary of $C(F)$. 
(We are prohibiting the former.) 
To prohibit only ${\G}_{F}$-equivalence is not sufficient: 
this tells us only that the restriction of the boundary divisor $D$ 
to a neighborhood $U$ of the $F$-locus $\pi^{-1}(X_{F})$ is SNC,  
%but does not tell us the situation outside $U$, 
but does not rule out the possibility that two irreducible components of $D\cap U$ are 
analytic open subsets of the same irreducible component of $D$. 

Condition \ref{separate condition} corresponds more or less to Pink's Condition 7.12 in \cite{Pi}, 
except for the differences that 
Pink works in the language of (mixed) Shimura varieties and considers not only rays but also faces of $\sigma$. 

In Lemma \ref{lem: D(m) toroidal}, we showed that 
Condition \ref{separate condition} is necessary for $D$ to be SNC. 
We prove that it is also sufficient.  

\begin{proposition}\label{prop: SNC condition}
Suppose that Condition \ref{separate condition} holds for the smooth $\Sigma$.  
Then the boundary divisor $D$ of ${\Xcpt}$ is simple normal crossing. 
\end{proposition}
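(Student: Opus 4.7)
The plan is to show that Condition \ref{separate condition} suffices for SNC by a local analysis at each boundary point. Since smoothness of $\Sigma$ already gives that $D$ is normal crossing, it only remains to check that near any boundary point $x$, the analytic branches of $D$ through $x$ lie on pairwise distinct irreducible components.

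First I would set up the local picture. Given $x \in D$, choose a cusp $F$ and an $F$-cone $\sigma$ with $x \in \DFs$, and lift $x$ to $\tilde x \in \SFs \subset \XFcpt$. Using the factorization $\XFcpt \stackrel{p_1}{\to} \XFcpt/\GFZbar \stackrel{p_2}{\to} \Xcpt$ from \S \ref{ssec: stratification}, the map $p_2$ is an isomorphism near $x$ because $x \in \pi^{-1}(X_F)$, while $p_1$ is a local analytic isomorphism at $\tilde x$ because its stabilizer in $\GFZbar$ is trivial: the stabilizer of the $F$-cone $\sigma$ in $\Gamma_F$ is trivial by neatness of $\Gamma_F$, and $\GFZdash/\UFZ$ acts freely on $\VF$ by neatness of $\Gamma$. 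Hence a sufficiently small analytic neighborhood $U_{\tilde x}$ of $\tilde x$ maps isomorphically onto an analytic neighborhood of $x$. In $U_{\tilde x}$, smoothness of $\Sigma_F$ implies that the boundary is the transversal union $\bigcup_{\tau} \overline{S(F,\tau)} \cap U_{\tilde x}$ of smooth divisors, where $\tau$ ranges over the rays of $\sigma$; and by the gluing in the construction of $\Xcpt$, each branch $\overline{S(F, \tau)} \cap U_{\tilde x}$ lands in the irreducible component $\overline{D(F_{\tau}, \tau)}$ of $D$, where $F_{\tau} \succeq F$ is the unique cusp with $\tau \in \Sigma_{F_{\tau}}^{\circ}$.

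Two such components $\overline{D(F_{\tau_i}, \tau_i)}$ and $\overline{D(F_{\tau_j}, \tau_j)}$ coincide if and only if the pairs $(F_{\tau_i}, \tau_i)$ and $(F_{\tau_j}, \tau_j)$ are $\Gamma$-equivalent, which under the identifications $\Sigma_{F'} \subset \Sigma_{F}$ for $F' \succ F$ reduces to the rays $\tau_i$ and $\tau_j$ being $\Gamma$-equivalent as rays of $\Sigma$. By Condition \ref{separate condition}, distinct rays of $\sigma$ are never $\Gamma$-equivalent, so the components $\overline{D(F_{\tau}, \tau)}$ indexed by the rays of $\sigma$ are pairwise distinct. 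Hence, at $x$, each irreducible component of $D$ passing through $x$ contributes exactly one analytic branch, which rules out self-intersections and, combined with the normal crossing property already in hand, gives the SNC property at $x$. The main obstacle is the verification that $\XFcpt \to \Xcpt$ is a local analytic isomorphism at $\tilde x$---requiring both triviality of the stabilizer (from neatness) and that $p_2$ is an open immersion near $\pi^{-1}(X_F)$ (from the construction in \cite{AMRT})---but these are standard consequences of the material recalled in \S \ref{ssec: stratification}.
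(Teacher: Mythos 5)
Your proof is correct and follows essentially the same route as the paper's: lift the boundary point to the partial compactification $\XFcpt$, observe that the projection is a local analytic isomorphism near the lift (by neatness), note that the local branches of $D$ are indexed by the rays of $\sigma$, and conclude from Condition \ref{separate condition} that these rays give globally distinct irreducible components of $D$. You spell out a few steps the paper leaves implicit — the factorization $p = p_2 \circ p_1$ and the explicit reduction of component coincidence to $\Gamma$-equivalence of rays — but the structure and the key ideas are identical.
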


\begin{proof}
Since we already know that $D$ is normal crossing, 
it is sufficient to check that its irreducible components have no self-intersection. 
Let $x$ be a boundary point of ${\Xcpt}$. 
Let ${\DFs}$ be the stratum of ${\Xcpt}$ containing $x$. 
We choose a point $\tilde{x} \in {\SFs}\subset {\XFcpt}$ which is mapped to $x$ by the projection 
$p\colon {\XFcpt}\to {\Xcpt}$. 
Since ${\G}$ is neat, $p$ is a local isomorphism in a small open neighborhood $U$ of $\tilde{x}$. 
If $\tau_{1}, \cdots, \tau_{m}$ are the rays of $\sigma$, 
the intersection of the boundary divisor of ${\XFcpt}$ with $U$ is 
\begin{equation*}
U \cap (\overline{S\!(F, \tau_{1})} + \cdots + \overline{S\!(F, \tau_{m})}), 
\end{equation*}
where $\overline{S\!(F, \tau_{1})}, \cdots, \overline{S\!(F, \tau_{m})}$ are distinct. 
Therefore we have 
\begin{equation*}
p(U) \cap D = p(U \cap \overline{S\!(F, \tau_{1})}) + \cdots + p(U \cap \overline{S\!(F, \tau_{m})}), 
\end{equation*}
where the $m$ components in the right hand side are distinct. 
If we write $D_{i}=\overline{D(F_{i}, \tau_{i})}$, 
then $p(U \cap \overline{S\!(F, \tau_{i})})$ is an analytic open subset of $D_{i}$. 
By Condition \ref{separate condition}, the $m$ divisors $D_{1}, \cdots, D_{m}$ are globally distinct. 
This means that no irreducible component of $D$ has self-intersection in a neighborhood of $x$. 
Since $x$ is an arbitrary boundary point, this proves our assertion. 
\end{proof}

\subsection{Existence}\label{ssec: SNC existence}

In this subsection we prove Proposition \ref{prop: SNC exist}. 
In view of Proposition \ref{prop: SNC condition}, it suffices to prove the following. 

\begin{proposition}\label{prop: SNC cone exist}
For any ${\G}$-admissible collection of fans $\Sigma$, 
there exists a smooth projective ${\G}$-admissible subdivision of $\Sigma$ 
satisfying Condition \ref{separate condition}. 
\end{proposition}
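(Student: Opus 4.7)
The plan is to start with a smooth projective $\G$-admissible collection of fans $\Sigma$, which exists by \cite{AMRT} Corollary IV.2.4, and pass to its canonical \emph{barycentric subdivision} $\tilde\Sigma$; I will then verify that $\tilde\Sigma$ is again smooth, projective, and $\G$-admissible, and moreover satisfies Condition \ref{separate condition}. Concretely, for each cone $\tau$ of positive dimension in $\Sigma_F$ let $v(\tau) \in U(F)_{\Z}$ denote the sum of the primitive generators of the rays of $\tau$; this point lies in the relative interior of $\tau$, and since $\G$ acts by lattice automorphisms sending primitive generators to primitive generators, the assignment $\tau \mapsto v(\tau)$ is $\G$-equivariant. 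Declare $\tilde\Sigma_F$ to consist of the cones generated by $v(\tau_1), \ldots, v(\tau_k)$ for strictly increasing chains of faces $\tau_1 \subsetneq \cdots \subsetneq \tau_k$ contained in some $\sigma \in \Sigma_F$; the intrinsic nature of $v$ ensures $\G$-admissibility of $\tilde\Sigma$ and its compatibility with the inclusions $\Sigma_{F'} \subset \Sigma_F$ for $F \prec F'$.

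Next, smoothness follows from a direct computation. For a smooth cone $\sigma \in \Sigma_F$ with primitive ray generators $e_1, \ldots, e_m$ extending to a $\Z$-basis of $U(F)_{\Z}$, and a complete chain $\tau_1 \subsetneq \cdots \subsetneq \tau_m$ of faces with $\dim \tau_j = j$, I reorder the $e_i$'s so that $\tau_j = {\rm cone}(e_1, \ldots, e_j)$; then $v(\tau_j) = e_1 + \cdots + e_j$, and the change-of-basis matrix from $(v(\tau_j))_j$ to $(e_j)_j$ is unipotent lower-triangular. Hence $(v(\tau_j))_j$ is itself a $\Z$-basis of $U(F)_{\Z}$, and the associated maximal cone of $\tilde\Sigma_F$ is smooth.

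For projectivity, starting from a polarization function $\phi$ of $\Sigma$ (in the sense of \cite{AMRT} Definition IV.2.1), I would perturb as follows: choose positive constants $\epsilon_\tau$ depending only on the $\G$-orbit of $\tau$ and decreasing sufficiently rapidly with $\dim \tau$, set $\tilde\phi(v(\tau)) := \phi(v(\tau)) - \epsilon_\tau$ on each new ray, and extend linearly on each cone of $\tilde\Sigma$. A standard convexity calculation shows that for $\epsilon_\tau$ small enough, $\tilde\phi$ is $\G_F$-invariant, strictly convex across every wall of $\tilde\Sigma$, and inherits the boundary growth conditions of $\phi$, hence is a polarization function for $\tilde\Sigma$. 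This perturbation-of-polarization step is the one I expect to demand the most technical care.

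Finally, Condition \ref{separate condition} essentially boils down to a dimension argument. Suppose $\tilde\sigma \in \tilde\Sigma_F$ has distinct rays $v(\tau_i), v(\tau_j)$ coming from a chain $\tau_1 \subsetneq \cdots \subsetneq \tau_k$ of faces of some $\sigma \in \Sigma_F$, and that $\gamma \in \G$ carries $v(\tau_i)$ to $v(\tau_j)$. By $\G$-equivariance, $v(\gamma \tau_i) = v(\tau_j)$. Since $v(\tau)$ lies in the relative interior of $\tau$, the cone $\tau$ is uniquely characterized among cones of $\Sigma$ by the point $v(\tau)$ (using the $\G$-admissibility compatibilities of the collection), so $\gamma \tau_i = \tau_j$. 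But $\G$ acts linearly and therefore preserves dimension, giving $\dim \tau_i = \dim \tau_j$, contrary to the strict inclusion in the chain. Hence Condition \ref{separate condition} holds for $\tilde\Sigma$, completing the proof.
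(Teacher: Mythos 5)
Your approach is genuinely different from the paper's, and the difference is instructive. The paper performs a very economical subdivision $\Sigma' \prec \Sigma$ (bisecting a representative of each $\G$-orbit of $2$-dimensional cones and propagating this to higher cones), observes that $\Sigma'$ already satisfies Condition \ref{separate condition}, and then invokes Lemma \ref{lem: separate subdivision} -- namely that any further $\G$-admissible subdivision of a fan satisfying Condition \ref{separate condition} still satisfies it -- to reduce the smoothness and projectivity requirements entirely to \cite{AMRT}. You instead take the barycentric subdivision of a smooth projective $\Sigma$ and attempt to verify smoothness, projectivity, and Condition \ref{separate condition} all at once. Your verification of smoothness (the unipotent change of basis, though the triangular matrix is upper- rather than lower-triangular) and of Condition \ref{separate condition} (the barycenter $v(\tau)$ determines $\tau$, so a $\gamma$ identifying two barycentric rays would identify two cones of strictly different dimensions in the underlying chain) are both correct, and the latter is in fact slicker than the paper's analogous contradiction.

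The gap is in projectivity, as you yourself flag. You assert that subtracting $\G$-orbit-dependent constants $\epsilon_\tau$ ``decreasing sufficiently rapidly with $\dim\tau$'' from $\phi$ at the barycenters and extending linearly produces a polarization function for $\tilde\Sigma$, but you neither carry out the ``standard convexity calculation'' nor verify the integrality/rationality requirements or the precise boundary growth conditions of \cite{AMRT} Definition IV.2.1. Building an explicit $\G_F$-invariant polarization function on an infinite admissible fan is exactly the delicate content of \cite{AMRT} \S IV.2, and it should not be waved past. The cleaner repair, which also shortens your proof, is to drop the projectivity perturbation entirely: after establishing that your barycentric $\tilde\Sigma$ is $\G$-admissible and satisfies Condition \ref{separate condition}, pass to a smooth projective $\G$-admissible subdivision of $\tilde\Sigma$ (which \cite{AMRT} supplies) and observe -- this requires a short lemma, essentially the paper's Lemma \ref{lem: separate subdivision}, which your minimal-cone argument already contains the core of -- that Condition \ref{separate condition} is inherited under further subdivision. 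With that change your smoothness computation also becomes unnecessary. One last small mismatch: the proposition asks for a subdivision of a \emph{given} $\Sigma$, while you start from an unrelated smooth projective collection; you should begin instead from a smooth $\G$-admissible refinement of the given $\Sigma$, as the paper implicitly does with ``we may start with a smooth $\Sigma$.''
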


For the proof, we first note the following. 

\begin{lemma}\label{lem: separate subdivision}
If a ${\G}$-admissible collection of fans $\Sigma'$ satisfies Condition \ref{separate condition}, 
any ${\G}$-admissible subdivision $\Sigma''\prec \Sigma'$ also satisfies Condition \ref{separate condition}. 
\end{lemma}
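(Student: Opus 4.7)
The plan is to argue by contradiction. Suppose some $\sigma''\in \Sigma''$ has two distinct rays $\tau_1, \tau_2$ satisfying $\gamma \tau_1 = \tau_2$ for some $\gamma\in {\G}$. By the definition of subdivision there is a unique cone $\sigma'\in \Sigma'$ whose relative interior contains the relative interior of $\sigma''$; in particular $\tau_1, \tau_2\subset \sigma'$. For each $i=1,2$, let $\sigma_i'$ be the smallest face of $\sigma'$ containing $\tau_i$ (equivalently, the unique face of $\sigma'$ whose relative interior meets $\tau_i$). Since $\gamma$ permutes the cones of $\Sigma'$ and respects relative interiors, the equality $\gamma \tau_1 = \tau_2$ upgrades to $\gamma \sigma_1' = \sigma_2'$.

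I would then run a short dichotomy on the action of $\gamma$ on the rays of $\sigma_1'$, each of which is also a ray of $\sigma'$ and is sent by $\gamma$ to a ray of $\sigma_2'$, hence again to a ray of $\sigma'$. If some ray $\rho$ of $\sigma_1'$ satisfies $\gamma \rho \ne \rho$, then $\rho$ and $\gamma \rho$ are two distinct ${\G}$-equivalent rays of $\sigma'$, directly contradicting Condition~\ref{separate condition} for $\Sigma'$. Otherwise $\gamma$ fixes every ray of $\sigma_1'$; writing such a ray as $\R_{\geq 0}v$ with $v$ a primitive element of the relevant lattice ${\UFZ}$, the relation $\gamma v = \lambda v$ with $\lambda>0$, combined with the fact that $\gamma$ carries primitive lattice vectors to primitive lattice vectors, forces $\lambda = 1$. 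By linearity $\gamma$ then fixes $\sigma_1'$ pointwise, so $\tau_2 = \gamma \tau_1 = \tau_1$, contradicting $\tau_1\ne \tau_2$.

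There is no serious obstacle: the only pieces to check are the uniqueness of the minimal-containment assignment $\tau_i\mapsto \sigma_i'$, which is the fan property of $\Sigma'$ applied to the relative-interior locations of the $\tau_i$, and the short integrality argument promoting ``$\gamma$ fixes $\rho$ as a set'' to ``$\gamma v = v$''. The structural content of the lemma is that a subdivision, being a refinement of $\Sigma'$ inside the same ambient spaces, cannot create ${\G}$-equivalences between rays of a common cone that were not already visible among the rays of some $\sigma'\in \Sigma'$; this is exactly what the case analysis above makes precise.
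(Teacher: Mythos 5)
Your argument is correct and takes essentially the same route as the paper's proof: pass to the minimal cones of $\Sigma'$ containing the two rays, observe that they are faces of a common cone $\sigma'\in\Sigma'$ and are carried one to the other by $\gamma$, and derive a violation of Condition \ref{separate condition} for $\Sigma'$. Your closing dichotomy (either $\gamma$ moves some ray of $\sigma_1'$, giving two distinct ${\G}$-equivalent rays of $\sigma'$, or it fixes every ray and hence, by primitivity of the lattice generators, fixes $\sigma_1'$ pointwise, forcing $\tau_1=\tau_2$) simply spells out the step the paper compresses into the remark that $\gamma\colon\tau_{+}'\to\tau_{-}'$ is not the identity.
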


\begin{proof}
Let $\sigma''$ be a cone in $\Sigma''_{F}$ and 
$\tau_{+}''\ne \tau_{-}''$ be two rays of $\sigma''$. 
Let $\tau_{+}', \tau_{-}'$ be the minimal cones in $\Sigma'$ with $\tau_{\pm}''\subset \tau_{\pm}'$. 
($\tau_{\pm}'$ belongs to the same cusp as $\tau_{\pm}''$.) 
%Since $\tau_{\pm}''\in \Sigma_{F}''$, we have $\tau_{\pm}'\in \Sigma_{F}'$. 
We take a cone $\sigma'\in \Sigma_{F}'$ such that $\sigma''\subset \sigma'$. 
By the minimality, we see that $\tau_{+}', \tau_{-}'$ are faces of $\sigma'$. 
Now, if $\gamma\tau_{+}''=\tau_{-}''$ for some $\gamma\in {\G}$, 
then $\gamma\tau_{+}'=\tau_{-}'$ by the minimality. 
Since $\gamma\colon \tau_{+}' \to \tau_{-}'$ is not the identity, 
this shows that two rays of $\sigma'$ are ${\G}$-equivalent, which is absurd. 
\end{proof}

Now we give the proof of Proposition \ref{prop: SNC cone exist}. 

\begin{proof}[(Proof of Proposition \ref{prop: SNC cone exist})]
We may start with a smooth $\Sigma$. 
We will take a ${\G}$-admissible subdivision $\Sigma' \prec \Sigma$ 
which satisfies Condition \ref{separate condition} (but not smooth). 
By Lemma \ref{lem: separate subdivision}, 
it then suffices to take a smooth projective ${\G}$-admissible subdivision of $\Sigma'$ following \cite{AMRT}. 

We consider the set $\mathcal{S}$ of ${\G}$-equivalence classes of $2$-dimensional cones in $\Sigma$. 
For each class $[\sigma]\in \mathcal{S}$, 
we choose a two-division of a representative $2$-dimensional cone $\sigma$. 
By letting ${\G}$ act and doing this for all classes in $\mathcal{S}$, 
we obtain a ${\G}$-invariant collection of two-divisions of all $2$-dimensional cones in $\Sigma$. 
Note that this is well-defined, i.e., every $2$-dimensional cone is two-divided exactly once, 
because the stabilizer of a $2$-dimensional cone acts trivially on this cone by the neatness of ${\G}$. 
Now, since $\Sigma$ is simplicial, 
each two-division of a $2$-dimensional cone $\sigma$ 
induces a two-division of all cones containing $\sigma$ uniquely and compatibly: 
thus a codimension $1$ wall is added inside each top-dimensional cone containing $\sigma$. 
(This is essentially a so-called star subdivision.) 
In this way we obtain a ${\G}$-admissible subdivision $\Sigma'$ of $\Sigma$. 
(See Figure \ref{figure: Sigma'}.) 

We shall show that $\Sigma'$ satisfies Condition \ref{separate condition}. 
Let $\tau_{+}'\ne \tau_{-}'$ be two rays in $\Sigma'$ 
which are contained in a common cone $\sigma'\in \Sigma_{F}'$. 
We want to show that $\tau_{+}'$ and $\tau_{-}'$ are not ${\G}$-equivalent. 
As in the proof of Lemma \ref{lem: separate subdivision}, 
we take a cone $\sigma\in \Sigma_{F}$ with $\sigma'\subset \sigma$ and  
the minimal cones $\tau_{\pm}\in \Sigma$ with $\tau_{\pm}'\subset \tau_{\pm}$. 
Then $\tau_{+}, \tau_{-}$ are faces of $\sigma$. 
By the minimality, we have ${\rm int}(\tau_{\pm}')\subset {\rm int}(\tau_{\pm})$ 
where ${\rm int}$ stands for the interior. 

Now suppose to the contrary that $\gamma\tau_{+}'=\tau_{-}'$ for some $\gamma\in {\G}$. 
Then $\gamma\tau_{+}=\tau_{-}$ by the minimality of $\tau_{\pm}$. 
This implies $\tau_{+}\ne \tau_{-}$, 
because the stabilizer of a cone in $\Sigma$ acts trivially on this cone by the neatness of ${\G}$. 
Therefore $\tau_{+}$ and $\tau_{-}$ are two different faces of a common old cone $\sigma\in \Sigma_{F}$ 
such that the subsets ${\rm int}(\tau_{+}')$, ${\rm int}(\tau_{-}')$ of 
their interiors are still contained in a common new cone $\sigma'\in \Sigma_{F}'$. 
However, by the construction of our subdivision, 
the interiors of two equidimensional faces of a cone in $\Sigma_{F}$ 
must be separated by a codimension $1$ wall, so this cannot happen. 
This finishes the proof of Proposition \ref{prop: SNC cone exist}. 
\end{proof}

\begin{figure}[h]
\includegraphics[height=40mm, width=80mm]{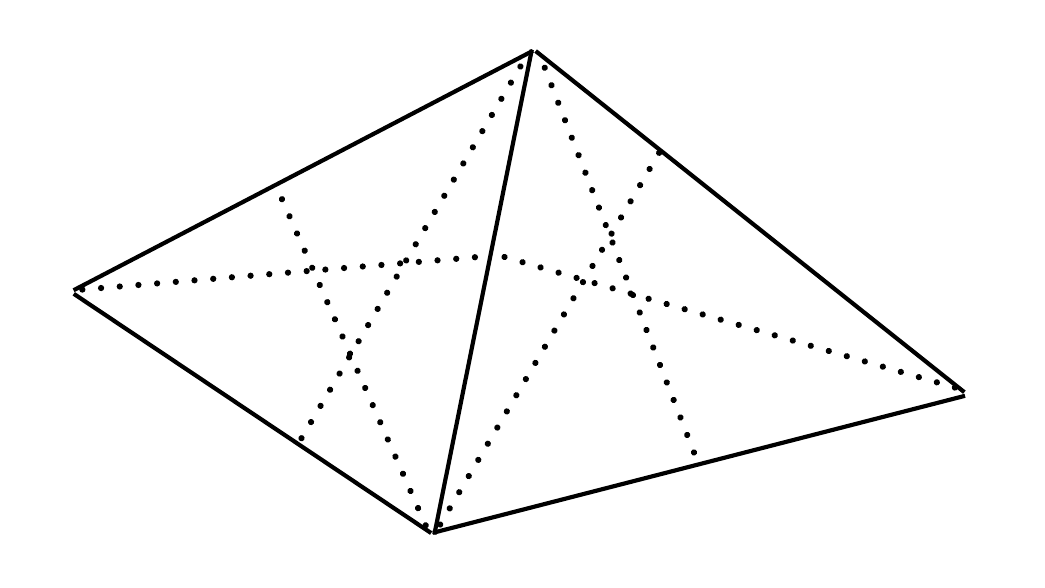}
\caption{$\Sigma'\prec \Sigma$}\label{figure: Sigma'}
\end{figure}

%\begin{figure}[h]
%\begin{tikzpicture}
%\draw(0,0)--(5,0);
%\draw(0,0)--(3,2.4);
%\draw(3,2.4)--(5,0);
%\draw(0,0)--(3.8,1.44);
%\draw(3,2.4)--(1.5,0);

%\draw (1.5, -0.3) node{$\tau_{+}'$};
%\draw (4.1, 1.6) node{$\tau_{-}'$};
%\node at (3, -0.3) {\large{$\tau_{+}$}};
%\node at (4.8, 0.8) {\large{$\tau_{-}$}};
%\draw (3.5, 0.7) node{$\sigma'$};
%\node at (2.4,0.9) {\LARGE{$\sigma$}};
%\end{tikzpicture}
%\caption{}\label{figure: subdivision}
%\end{figure}

%Geometrically the set $\mathcal{S}$ corresponds to the set of irreducible components of 
%intersections of two irreducible boundary divisors, including self-intersections. 
%Our refinement $\Sigma'\prec \Sigma$ gives the blow-up of a subscheme 
%whose support is the union of all such components. 

%%%%%%% Reference %%%%%%%%%%%%%%%%%%%%%%%%%%%%%

\end{document}